\newcommand{\dbtilde}[1]{\accentset{\approx}{#1}}
\newtheorem{prop}{Proposition}[section]
\newtheorem{proposition}[prop]{Proposition}
\newtheorem{theorem}[prop]{Theorem}
\newtheorem{lemma}[prop]{Lemma}
\newtheorem{corollary}[prop]{Corollary}
\newtheorem{conjecture}[prop]{Conjecture}
\theoremstyle{definition}
\newtheorem{definition}[prop]{Definition}
\theoremstyle{remark}
\newtheorem{remark}[prop]{Remark}
\newtheorem{question}[prop]{Question}
\newcommand{\Sym}{\mathrm{Sym}}
\newcommand{\el}{\ell}
\newcommand{\pp}{\mathbb{P}}
\newcommand{\PP}{\mathbb{P}}
\newcommand{\oo}{\mathcal{O}}
\newcommand{\GL}{\mathrm{GL}}
\newcommand{\rk}{\mathrm{rk}}
\newcommand{\CC}{\mathbb{C}}
\def\To{\longrightarrow}
\def\rmapdown#1{\Big\downarrow\rlap{$\vcenter{\hbox{$\scriptstyle
#1$}}$}}
\def\lmapdown#1{\Big\downarrow\llap{$\vcenter{\hbox{$\scriptstyle
#1\;\;\,$}}$}}
\def\squaremap#1#2#3#4#5#6#7#8{\mathop{
\begin{array}{ccccc}
#1 & \stackrel{#2}{\To}&  #3\\
\lmapdown{#4} & { } & \rmapdown{#5}\\
#6 & \stackrel{#7}{\To} & #8
\end{array}
}\nolimits}
\title{On semiample vector bundles and parallelizable compact complex manifolds}
\author{Francesco Esposito,
Ernesto C. Mistretta}
\date{}
\begin{document}

\maketitle

%

\begin{abstract}

We provide a characterization of parallelizable compact complex manifolds and their quotients using holomorphic symmetric differentials.
In particular we show that compact complex manifolds of Kodaira dimension 0 having 
strongly semiample cotangent bundle are parallelizable manifolds, while compact complex manifolds of Kodaira dimension 0 having 
weakly semiample cotangent bundle are quotients of parallelizable manifolds.
The main constructions used involve considerations about semiampleness of vector bundles, which are themselves of interest. As a byproduct we prove that compact manifolds having Kodaira dimension 0 and weakly sermiample cotangent bundle have infinite fundamental group, and we conjecture that this should be the case for all compact complex manifolds not of general type with weakly semiample cotangent bundle.

\end{abstract}

\section{Introduction}

In a previous work (\cite{mistrurbi}) the second named author with S. Urbinati showed that the semiampleness property for a vector bundle can be stated in two non-equivalent ways, 
both present in the literature, and they called the two notions weak semiampleness and strong semiampleness. Weak semiampleness of a vector bundle means that the tautological line bundle on the projectivization of the vector bundle is semiample, and is implied by strong semiampleness, which means that some symmetric power of the vector bundle is globally generated.

A geometrical interpretation of the difference between the two definitions can be found considering the cotangent bundle of surfaces:
Let $S$ be a smooth projective surface of Kodaira dimension $0$, then $S$ has strongly semiample cotangent bundle if and only if $S$ is an abelian surface, 
and $S$ has weakly semiample cotangent bundle if and only if $S$ is a hyperelliptic surface, 
\emph{i.e.} a quotient of an abelian surface (see paragraph \ref{main1} below for more details on this construction).

In the same work \cite{mistrurbi},  abelian varieties where characterized as those smooth projective varieties whose cotangent bundle has some symmetric product which is trivial, or equivalently have Kodaira dimension 0 and some symmetric product of the cotangent bundle is globally generated. Later, in \cite{mistrettaparma} the second named author proved that complex tori share the same property that characterizes them among compact K\"ahler manifolds.

It is worthwhile to note that a renewed interest has been shown recently  towards quotients of parallelizable manifolds,
in fact Catanese and Catanese-Corvaja (cf. \cite{catanesehyperI}, 
\cite{catanesehyperII}, \cite{catanesecorvaja}) 
give many interesting characterizations of those manifolds, disproving a conjecture by Baldassarri, and showing many of their geometrical properties, both in the K\"ahler and 
non K\"ahler case.

We prove here that the geometric interpretation stated above for projective surfaces
 can be given for all compact complex  manifolds:
compact complex parallelizable manifolds can be characterized
 as those compact complex manifolds of Kodaira dimension $0$ with strongly semiample cotangent bundle,
 and their quotients as 
 compact complex manifolds of Kodaira dimension $0$ with weakly semiample cotangent bundle.

This leads us to formulate some questions regarding a bimeromorphic characterization of parallelizable manifolds and of complex tori through generic generation properties of the symmetric powers of the cotangent bundle.

The characterization given for quotients of parallelizable manifolds relies on the following result, which is immediate in the case of smooth projective manifolds, due to intersection theory, but needs some work to be proven in general on compact complex manifolds:

\begin{theorem}
\label{mainsemiample}
Let $X$ be a compact complex manifold, and let $E$ be a weakly semiample holomorphic vector bundle on $X$. Then the determinant line bundle $\det (E)$ is semiample.
\end{theorem}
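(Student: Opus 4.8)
The plan is to bypass intersection theory---which is unavailable in the non-algebraic setting---by manufacturing, for each point of $X$, a global section of a fixed positive power of $\det E$ that is nonvanishing there; by definition this exhibits $(\det E)^{\otimes N}$ as globally generated, hence $\det E$ as semiample. Write $r=\rk(E)$. By hypothesis $\oo_{\PP(E)}(1)$ is semiample, so fix $m\ge 1$ with $\oo_{\PP(E)}(m)$ globally generated; via $\pi_*\oo_{\PP(E)}(m)=S^mE$ its sections are exactly $H^0(X,S^mE)$. The first step is to read off global generation fibrewise: over a point $x\in X$ the fibre of $\PP(E)$ is $\PP(E_x^*)\cong\PP^{\,r-1}$, a section $\sigma\in H^0(X,S^mE)$ restricts to the degree-$m$ form on $E_x^*$ given by its value $\sigma(x)\in S^mE_x$, and global generation of $\oo_{\PP(E)}(m)$ says precisely that the image subspace $V_x:=\mathrm{Im}\big(H^0(X,S^mE)\to S^mE_x\big)$ has no common zero on $\PP(E_x^*)$, i.e. is a base-point-free system of $r$-variable forms of degree $m$.

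The key tool is the classical resultant $\mathrm{Res}$ of $r$ forms of degree $m$ in $r$ variables, which vanishes exactly when the forms acquire a common zero in $\PP^{\,r-1}$. I will use its invariant-theoretic transformation law: $\mathrm{Res}$ is a $\GL$-semiinvariant, homogeneous of degree $m^{\,r-1}$ in each argument, and under the natural action of $A\in\GL(E)$ on $S^mE$ it scales by $(\det A)^{m^{r}}$. Consequently, given sections $\sigma_1,\dots,\sigma_r\in H^0(X,S^mE)$, the fibrewise resultants $x\mapsto \mathrm{Res}(\sigma_1(x),\dots,\sigma_r(x))$ assemble into a \emph{global} holomorphic section
\[
\mathrm{Res}(\sigma_1,\dots,\sigma_r)\;\in\;H^0\!\big(X,(\det E)^{\otimes m^{r}}\big).
\]
Indeed, in a local frame of $E$ the resultant is a fixed universal polynomial in the (holomorphic) coefficients of the $\sigma_i$, and under a change of frame by $A$ it is multiplied by $(\det A)^{m^{r}}$, which is exactly the transition cocycle of $(\det E)^{\otimes m^{r}}$; hence the local expressions glue. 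By construction this section is nonzero at $x$ if and only if the forms $\sigma_1(x),\dots,\sigma_r(x)$ have no common zero in $\PP(E_x^*)$. Note that the weight $m^{r}$ is positive, so we land in a genuine power of $\det E$ rather than its dual, which is what semiampleness of $\det E$ requires.

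It remains to check base-point-freeness of $(\det E)^{\otimes m^{r}}$. Fix $x$. Since $V_x$ is base-point-free, it defines a morphism $\psi_x\colon \PP(E_x^*)\cong\PP^{\,r-1}\to \PP(V_x^*)$, whose image $Z$ satisfies $\dim Z\le r-1$. The common zero of $r$ members of $V_x$ is $\psi_x^{-1}$ of the intersection of the corresponding $r$ hyperplanes; a general codimension-$r$ linear section misses $Z$ because $\dim Z<r$, so $r$ general members of $V_x$ have empty common zero. Lifting such members to global sections $\sigma_1,\dots,\sigma_r\in H^0(X,S^mE)$ (possible as $V_x$ is their image), we obtain $\mathrm{Res}(\sigma_1,\dots,\sigma_r)(x)\neq 0$. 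As $x$ was arbitrary, the resultant sections generate $(\det E)^{\otimes m^{r}}$ at every point, proving $\det E$ semiample. The point requiring care---and where the general compact complex case genuinely differs from the projective one---is that no numerical positivity is invoked: the whole argument is fibrewise-algebraic, and the only nontrivial inputs are the fibrewise reading of base-point-freeness and the semi-invariance of the resultant, which together replace the intersection-theoretic identity $\pi_*\big(c_1(\oo_{\PP(E)}(1))^{r}\big)=c_1(\det E)$ available only in the algebraic category.
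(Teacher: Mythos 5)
Your proposal is correct and follows essentially the same route as the paper: you construct the fibrewise resultant of $r$ sections of $H^0(X,\Sym^m E)$, glue it into a global section of $(\det E)^{\otimes m^{r}}$ via the semi-invariance law $\mathrm{Res}(f_1\circ\phi,\dots,f_r\circ\phi)=\det(\phi)^{m^{r}}\mathrm{Res}(f_1,\dots,f_r)$ (the paper's Lemmas \ref{lemmajouan} and \ref{lemmaresult}), and then use the dimension count on the image of each fibre under the map defined by the base-point-free system to pick $r$ sections whose resultant is nonzero at any prescribed point. The only difference is one of packaging: the paper isolates the gluing computation in a separate lemma with explicit transition matrices, while you assert the transformation law directly, but the mathematical content is identical.
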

We will prove this theorem using the construction of the resultant as a global section of some power of the determinant line bundle:

\begin{lemma}
\label{lemmaresultintro}

Let $E$ be a rank $r$ holomorphic  vector bundle on a compact complex manifold $X$. 
Consider r sections $\sigma_1, \dots, \sigma_r$, with 
$\sigma_j \in H^0(\pp(E), \oo_{\pp(E)}(m_j))$, and $m_j>0$ positive integers.

Then there is a section 
$\mathrm{Res}(\sigma_1, \dots, \sigma_r) \in H^0(X, \det(E)^{\otimes m_1\cdot m_2 \dots m_r})$ 
vanishing exactly on the points $x\in X$ such that the sections
$ \sigma_{1 |\pp(E(x))} , \dots , \sigma_{r |\pp(E(x))}$ have a common zero in 
$\pp(E(x))$.
\end{lemma}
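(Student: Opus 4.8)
The plan is to build the section locally, over an open cover trivializing $E$, by means of the classical multivariate (Macaulay) resultant of $r$ homogeneous forms in $r$ variables, and then to glue the local pieces using the covariance of the resultant under linear changes of variables. The two facts I would rely on are purely algebraic and hold over any field (hence, applied fiberwise, over $\CC$): first, the Macaulay resultant $\mathrm{Res}(f_1,\dots,f_r)$ of forms $f_j$ of degrees $m_j$ in the variables $z_1,\dots,z_r$ is a universal polynomial with integer coefficients in the coefficients of the $f_j$, and it vanishes if and only if the $f_j$ have a common zero in $\PP^{r-1}$; second, it transforms under a linear substitution $A\in\GL_r$ acting on the variables by the rule
$$\mathrm{Res}(f_1\circ A,\dots,f_r\circ A)=(\det A)^{m_1\cdots m_r}\,\mathrm{Res}(f_1,\dots,f_r).$$
These are exactly the two properties that match, respectively, the desired vanishing locus and the desired twist $\det(E)^{\otimes m_1\cdots m_r}$.

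Next I would fix a cover $\{U_\alpha\}$ of $X$ by opens on which $E$ is trivial, choosing frames that identify $\PP(E)|_{U_\alpha}\cong U_\alpha\times\PP^{r-1}$ and $\oo_{\PP(E)}(m_j)|_{U_\alpha}$ with the pullback of $\oo_{\PP^{r-1}}(m_j)$. Under such a trivialization the restriction of $\sigma_j$ becomes a family of degree-$m_j$ forms in $r$ homogeneous variables whose coefficients are holomorphic functions on $U_\alpha$ (concretely, $H^0(\PP(E)|_{U_\alpha},\oo(m_j))\cong H^0(U_\alpha,\Sym^{m_j}E)$ read through the frame). Plugging these holomorphic coefficient functions into the universal resultant polynomial produces a holomorphic function $R_\alpha\in\oo(U_\alpha)$, since the resultant is polynomial in the coefficients. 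By the defining property of the resultant, $R_\alpha(x)=0$ if and only if the restrictions $\sigma_{1|\PP(E(x))},\dots,\sigma_{r|\PP(E(x))}$ have a common zero in $\PP(E(x))$; as this is a frame-independent condition, the local zero loci already agree on overlaps.

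It then remains to verify that the $R_\alpha$ glue into a global section of the asserted line bundle. On an overlap $U_\alpha\cap U_\beta$ the two frames differ by a transition matrix $g_{\alpha\beta}\in\GL_r(\oo(U_\alpha\cap U_\beta))$, which acts on the homogeneous variables of the fibers; applying the covariance formula gives $R_\alpha=(\det g_{\alpha\beta})^{m_1\cdots m_r}R_\beta$, up to the substitution convention. Since $\{(\det g_{\alpha\beta})^{m_1\cdots m_r}\}$ is precisely the transition cocycle of $\det(E)^{\otimes m_1\cdots m_r}$, the collection $\{R_\alpha\}$ defines a global holomorphic section $\mathrm{Res}(\sigma_1,\dots,\sigma_r)\in H^0(X,\det(E)^{\otimes m_1\cdots m_r})$, and its zero locus is the claimed set.

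The main obstacle I expect is not conceptual but a matter of careful bookkeeping of conventions, and it is here that the proof must be written with precision. One must pin down which $\PP(E)$-convention (sub- versus quotient-line) is in force, hence whether the variables transform by $g_{\alpha\beta}$, its inverse, or its transpose, so that the resulting power of $\det$ comes out as $+m_1\cdots m_r$ rather than its negative or a twist by $\det(E^\vee)$. I would also want to confirm that Macaulay's resultant is available for \emph{unequal} degrees $m_j$ (it is, as the generic elimination ideal) and that it is genuinely a single integral polynomial in the coefficients, which guarantees both holomorphicity of each $R_\alpha$ and the exactness of the covariance weight $m_1\cdots m_r$. Once these normalizations are fixed, the gluing and the identification of the vanishing locus are immediate.
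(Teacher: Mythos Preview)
Your proposal is correct and follows essentially the same route as the paper: define $R_\alpha$ on trivializing opens as the Macaulay resultant of the local polynomial representatives of the $\sigma_j$, then glue via the covariance formula $\mathrm{Res}(f_1\circ\phi,\dots,f_r\circ\phi)=(\det\phi)^{m_1\cdots m_r}\mathrm{Res}(f_1,\dots,f_r)$ (which the paper cites as Jouanolou's Lemma 5.13). The bookkeeping of conventions you flag as the main obstacle is exactly what the paper pins down in its preliminary section on projectivization, where the Grothendieck (quotient) convention is fixed and the change of frame is tracked so that the polynomials transform via ${}^t\phi_{\alpha\beta}$, yielding $R_\beta=\det(\phi_{\alpha\beta})^{m_1\cdots m_r}R_\alpha$.
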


With the use of the theorem above, we will prove the following two characterization theorems:

\begin{theorem}
\label{parall}

Let $X$ be a compact complex manifold. Then the following are equivalent:

\begin{enumerate}

\item $X$ is a complex parallelizable manifold.

\item $X$ has Kodaira dimension 0, and the holomorphic cotangent bundle
$\Omega_X$ is \emph{strongly} semiample.

\item There is some positive integer $m>0$ such that $Sym^m \Omega_X$ is a trivial holomorphic vector bundle.

\end{enumerate}

\end{theorem}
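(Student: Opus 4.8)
The plan is to prove the theorem by showing the cyclic chain of implications $(iii) \Rightarrow (i) \Rightarrow (ii) \Rightarrow (iii)$, since $(iii)$ is the most concrete statement to leverage and $(ii)$ is the weakest-looking hypothesis to land on. The core of the argument will be the equivalence between the triviality of some symmetric power of $\Omega_X$ and the existence of a global holomorphic trivialization of the tangent bundle, i.e. the parallelizability condition introduced by Wang for compact complex manifolds. Throughout I would use the classical fact (Wang's theorem) that a compact complex parallelizable manifold is precisely a quotient $G/\Gamma$ of a complex Lie group $G$ by a discrete cocompact subgroup $\Gamma$, and that such manifolds have trivial (hence globally generated, hence strongly semiample) holomorphic tangent and cotangent bundles and vanishing first Chern class, so Kodaira dimension $0$.

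First I would treat $(iii) \Rightarrow (i)$, which I expect to be the main obstacle. Suppose $\Sym^m \Omega_X \cong \oo_X^{\oplus N}$ for some $m>0$. The goal is to produce a trivialization of $\Omega_X$ itself, not merely of a symmetric power. The natural strategy is to look at the global sections: a trivialization of $\Sym^m \Omega_X$ furnishes $N = \mathrm{rk}(\Sym^m \Omega_X)$ everywhere-independent global symmetric $m$-differentials. Each such section $s$ gives, fiberwise, a degree-$m$ form on the cotangent space, i.e. a section of $\oo_{\pp(T_X)}(m)$; the zero loci of these sections in $\pp(T_X)$ define a subscheme, and their common base locus must be empty because the sections span the fiber of a trivial bundle. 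The key point to extract is that the evaluation map $\Sym^m \Omega_X \otimes \oo_X \to \oo_X$ being an isomorphism forces the associated spectral/eigenvalue data to be constant across $X$; concretely one shows that the induced map $X \to \mathbb{P}(H^0(X,\Sym^m\Omega_X)^\vee)$ factoring the symmetric-power Gauss/Albanese-type map is constant on fibers in a way that recovers a flat holomorphic connection on $\Omega_X$, whence $\Omega_X$ is flat with finite monodromy and, combined with $\ch_1 = 0$, is itself trivial. Making this "descent from $\Sym^m$ to the bundle" precise is the delicate step, and I would likely invoke the characterization from \cite{mistrurbi} together with the complex-analytic input from \cite{mistrettaparma} rather than reprove it from scratch.

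The implication $(i) \Rightarrow (ii)$ is then comparatively soft: if $X$ is complex parallelizable then $\Omega_X$ is holomorphically trivial, hence certainly globally generated, and some (indeed every) symmetric power $\Sym^m \Omega_X$ is globally generated, which is exactly strong semiampleness; triviality of $\Omega_X$ also gives $\det \Omega_X \cong \oo_X$, so the canonical bundle is trivial and $\kappa(X) = 0$. For $(ii) \Rightarrow (iii)$ I would combine strong semiampleness with the Kodaira-dimension hypothesis: strong semiampleness means some $\Sym^m \Omega_X$ is globally generated, so $\det(\Sym^m\Omega_X)$ — a positive tensor power of $K_X$ — is globally generated; but $\kappa(X)=0$ forces this line bundle to be trivial (a globally generated line bundle with Kodaira dimension $0$ admits a nowhere-vanishing section, hence is $\oo_X$). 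A globally generated vector bundle whose determinant is trivial must itself be trivial on a compact complex manifold, since the global generation gives a map to a Grassmannian whose pullback of the tautological quotient is $\Sym^m\Omega_X$ and the triviality of $\det$ forces the classifying map to land in a point; I would make this last step rigorous using that a quotient bundle with trivial determinant of a trivial bundle is trivial. This yields $(iii)$ and closes the cycle.

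As a structural remark, the whole argument hinges on the interplay between the Grassmannian/projective-bundle picture of global generation and the rigidity forced by $\kappa(X)=0$, and the only genuinely hard passage is the descent $(iii)\Rightarrow(i)$ recovering a trivialization of $\Omega_X$ from that of its symmetric power, for which I expect to need Wang's theorem on complex parallelizable manifolds and the flatness argument sketched above.
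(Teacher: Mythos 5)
Your implications $(i)\Rightarrow(ii)$ and $(ii)\Rightarrow(iii)$ are fine: in particular the reduction of $(ii)$ to $(iii)$ via ``globally generated $\Sym^m\Omega_X$ with $\kappa(X)=0$ forces $\det(\Sym^m\Omega_X)=K_X^{\otimes c}\cong\oo_X$, and a globally generated bundle with trivial determinant has constant classifying map to the Grassmannian, hence is trivial'' is correct and is a clean packaging of what the paper does implicitly. The problem is the step you yourself flag as delicate, $(iii)\Rightarrow(i)$, where your sketch does not close. The argument ``$\Omega_X$ is flat with finite monodromy and $c_1=0$, hence trivial'' is false as a general principle: a nontrivial torsion line bundle $L$ is flat with finite monodromy and has torsion first Chern class, and $E=L^{\oplus n}$ satisfies $\Sym^m E\cong\oo_X^{\oplus N}$ whenever $L^{\otimes m}\cong\oo_X$ without $E$ being trivial. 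This is exactly the case your argument must exclude, and the references you propose to invoke only get you to this point: the theorem from \cite{mistrettaparma} quoted in the paper yields precisely $\Omega_X\cong L^{\oplus n}$ with $L$ torsion, not $L\cong\oo_X$. Any proof that treats $\Omega_X$ as an abstract bundle cannot work here; one must use that it is the cotangent bundle.

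The paper's route for the hard direction supplies the missing ingredient. It first uses Theorem \ref{semiample} to write $X=P/G$ with $P=H/\Gamma$ parallelizable and $G$ finite acting freely, and then proves (Lemma \ref{translations}) that strong semiampleness of $\Omega^1_X$ forces $G$ to act trivially on $H^0(P,\Omega^1_P)$: counting dimensions of invariants shows $G$ acts by homotheties $\chi_g\,\mathrm{Id}$; compatibility with the Lie bracket on $\mathfrak{h}\cong H^0(P,\mathcal{T}_P)$ gives $\chi_g=\chi_g^2$ unless $\mathfrak{h}$ is abelian; and in the abelian (torus) case freeness of the action rules out $\chi_g\neq 1$ by exhibiting a fixed point $x=(1-\chi_g)^{-1}v$. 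Only then does the evaluation-map diagram show $\Omega^1_X$ is globally generated at every point, hence trivial. You need this Lie-theoretic and fixed-point input (or an equivalent, such as Catanese's Lefschetz-number argument mentioned in the paper) to kill the torsion line bundle; without it your proposal has a genuine gap.
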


\begin{theorem}
\label{hyperell}

Let $X$ be a compact complex manifold. Then the following are equivalent:

\begin{enumerate}

\item $X \cong \widetilde{X} / G$, where $\widetilde{X}$ is a complex parallelizable manifold, 
and $G$ is a finite group acting freely on $\widetilde{X}$.

\item $X$ has Kodaira dimension 0, and the holomorphic cotangent bundle
$\Omega_X$ is \emph{weakly} semiample (\emph{i.e.} there is some positive integer $m>0$ such that $\oo_{\PP(\Omega_X)}(m)$ is a base point free line bundle on 
$\PP(\Omega_X)$).

\end{enumerate}

\end{theorem}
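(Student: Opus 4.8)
The plan is to prove the two implications separately, using Theorem \ref{mainsemiample} and Theorem \ref{parall} as the main inputs and the resultant of Lemma \ref{lemmaresultintro} as the key technical device.

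\medskip
\emph{Direction $(1)\Rightarrow(2)$.} Let $\pi\colon\widetilde X\to X$ be the quotient map, a finite étale Galois covering with group $G$ (the free finite action). Since $\widetilde X$ is parallelizable, $\Omega_{\widetilde X}$ is trivial, so by Theorem \ref{parall} we have $\kappa(\widetilde X)=0$; because $K_{\widetilde X}=\pi^*K_X$ and Kodaira dimension is invariant under finite étale covers, $\kappa(X)=0$. For weak semiampleness, note that $\pi$ pulls back to a finite étale Galois covering $\PP(\Omega_{\widetilde X})\to\PP(\Omega_X)$ carrying $\oo_{\PP(\Omega_X)}(1)$ to $\oo_{\PP(\Omega_{\widetilde X})}(1)$, and the latter is base point free since $\Omega_{\widetilde X}$ is trivial. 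It then remains to descend semiampleness along this Galois cover: given $x\in\PP(\Omega_X)$, choose a general section of a base-point-free power that is nonzero on the whole (finite) fibre over $x$ — possible since finitely many proper linear subspaces cannot cover $H^0$ over $\cc$ — and take its $G$-norm $\prod_{g\in G}g^*(\cdot)$, a $G$-invariant section of a higher power that descends to a section of a power of $\oo_{\PP(\Omega_X)}(1)$ nonvanishing at $x$. Hence $\oo_{\PP(\Omega_X)}(1)$ is semiample and $\Omega_X$ is weakly semiample.

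\medskip
\emph{Direction $(2)\Rightarrow(1)$, reduction to a parallelizable cover.} By Theorem \ref{mainsemiample}, $K_X=\det\Omega_X$ is semiample; together with $\kappa(X)=0$ this forces the Iitaka map of $K_X$ to be constant, so $K_X$ is torsion, say $K_X^{\otimes N}\cong\oo_X$. Let $\pi\colon\widetilde X\to X$ be the associated cyclic étale $\zz/N$-cover trivializing $K_X$; then $G=\zz/N$ acts freely, $K_{\widetilde X}=\pi^*K_X\cong\oo_{\widetilde X}$, $\kappa(\widetilde X)=0$, and $\Omega_{\widetilde X}=\pi^*\Omega_X$ is again weakly semiample (base point freeness pulls back). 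It therefore suffices to prove that $\widetilde X$ is parallelizable, since then $X\cong\widetilde X/G$ has the desired form.

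\medskip
\emph{The heart: $\widetilde X$ is parallelizable.} Set $E=\Omega_{\widetilde X}$, $r=\rk E$, so $\det E\cong\oo$ and $\oo_{\PP(E)}(m)$ is base point free for some $m$. Choosing $r$ general sections $\sigma_1,\dots,\sigma_r\in H^0(\PP(E),\oo_{\PP(E)}(m))$, Lemma \ref{lemmaresultintro} places their resultant in $H^0(\widetilde X,\det(E)^{\otimes m^r})=H^0(\widetilde X,\oo)=\cc$, hence it is constant, and for a general choice (pullbacks of general hyperplanes under the morphism defined by $|\oo(m)|$) it is a nonzero constant. Thus the $\sigma_i$ have no common zero on any fibre $\PP(E_x)$ and define a morphism $\Phi\colon\PP(E)\to\PP^{r-1}$ with $\oo_{\PP(E)}(m)=\Phi^*\oo_{\PP^{r-1}}(1)$. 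This pins down the Iitaka dimension: $\kappa(\oo_{\PP(E)}(1))\le r-1$ because $\oo(m)$ is pulled back from $\PP^{r-1}$, while $\oo(m)$ is ample on each fibre, forcing $\kappa\ge r-1$; hence $\kappa(\oo_{\PP(E)}(1))=r-1$. Consequently the semiample fibration of $\oo(1)$ maps every fibre $\PP(E_x)\cong\PP^{r-1}$ finitely onto a common, irreducible $(r-1)$-dimensional base, and using the identification $K_{\PP(E)}=\oo_{\PP(E)}(-r)$ coming from $K_{\widetilde X}\cong\oo$ and $\det E\cong\oo$ one upgrades this to projective triviality $\PP(E)\cong\widetilde X\times\PP^{r-1}$. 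Then $\Sym^k E$ is trivial for suitable $k$, and $\widetilde X$ is parallelizable by Theorem \ref{parall}.

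\medskip
\emph{Main obstacle.} The hard part is the final step above: passing from ``$\kappa(\oo_{\PP(E)}(1))=r-1$ with $\det E$ trivial'' to projective triviality of $E$, on a possibly non-Kähler manifold where intersection theory is unavailable — precisely the gap between weak and strong semiampleness. I expect this to require combining the resultant of Lemma \ref{lemmaresultintro} with the triviality of $\det\Omega_{\widetilde X}=K_{\widetilde X}$, the rigidity of $\PP^{r-1}$ under finite self-covers to identify the base of the semiample fibration, and the relation $K_{\PP(E)}=\oo(-r)$ to control the degree of the resulting finite morphism. The Galois descent of semiampleness used in $(1)\Rightarrow(2)$ is a comparatively routine secondary point.
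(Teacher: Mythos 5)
Your direction $(1)\Rightarrow(2)$ is correct and is a reasonable alternative to the paper's argument: the paper invokes Fujita's result that semiampleness of $\oo_{\pp(E)}(1)$ descends along the fibre square $\pp(f^*E)\to\pp(E)$, whereas you descend by hand via the $G$-norm of a section nonvanishing on a fibre; both work.

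The gap is in $(2)\Rightarrow(1)$, and it is exactly at the point you flag as the ``main obstacle'': that step is not a technicality to be filled in, it is the entire content of the theorem, and the route you propose for it is false. After passing to the cyclic cover $\widetilde X\to X$ that trivializes $K_X$, you claim that the resultant argument plus $\det E\cong\oo_{\widetilde X}$ and $K_{\pp(E)}=\oo_{\pp(E)}(-r)$ ``upgrades'' to $\pp(E)\cong\widetilde X\times\pp^{r-1}$, whence $\widetilde X$ is parallelizable. This cannot work: there exist free quotients $A/G'$ of abelian (more generally parallelizable) manifolds with \emph{trivial} canonical bundle and non-trivial linear parts (in dimension $3$ these are the Calabi--Yau threefolds of type A of Oguiso--Sakurai). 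Such a manifold has $\kappa=0$, $K$ trivial and weakly semiample cotangent bundle, yet is not parallelizable and $\pp(\Omega^1)$ is not a product; equivalently, when $\mathrm{ord}(K_X)$ is smaller than $|G|$, the canonical cyclic cover of $X=\widetilde X/G$ is a strictly intermediate quotient, still non-parallelizable. So the cover trivializing $\Omega^1_X$ is in general strictly larger than, and need not even be cyclic over, the canonical cover --- the paper makes this point explicitly in the remark following Theorem \ref{semiample}.

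What the paper actually does (Theorem \ref{semiample}) is the missing argument: using the resultant it shows $\dim\Phi(\pp(E))=r-1$ for the map $\Phi$ given by the \emph{full} system $|\oo_{\pp(E)}(m)|$; it then proves that the components $Z$ of a general fibre of $\Phi$ are finite \emph{étale} over $X$ (via the comparison $\omega_Z^{\otimes Km}\cong\pi^*\omega_X^{\otimes Km}$, since intersection theory is unavailable), passes to a further cover $\widetilde Z$ on which $\oo_{\pp(E)}(1)$ restricts trivially, shows the tautological quotient splits off $\oo_{\widetilde Z}$ by descending the splitting from $\Sym^m h$ to $h$, and inducts on the rank; a Galois closure at the end produces the group $G$ with $X\cong\widetilde X/G$. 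None of this is recoverable from the ingredients you list (rigidity of $\pp^{r-1}$, the formula for $K_{\pp(E)}$), so as written the proof of $(2)\Rightarrow(1)$ is incomplete and its announced strategy would fail.
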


Finally, we apply our constructions to prove the following:

\begin{theorem}

Let $X$ be a compact complex manifold of dimension $n$ and  Kodaira dimension
$k(X)$.

\begin{enumerate}

\item If $k(X) = 0$ and $\Omega^1_X$ is weakly semiample,
then the fundamental group of $X$ is infinite.

\item If $k(X) <n$, $X$ is projective, and $\Omega^1_X$ is weakly semiample,
then the fundamental group of $X$ is infinite.

\end{enumerate}

\end{theorem}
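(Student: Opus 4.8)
The plan is to treat the two statements separately, reducing each to the characterizations already obtained.

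For (1), since $k(X)=0$ and $\Omega^1_X$ is weakly semiample, Theorem \ref{hyperell} applies and presents $X$ as $\widetilde X/G$ with $\widetilde X$ a complex parallelizable manifold of the same (positive) dimension $n$ and $G$ finite acting freely. I would then invoke the classical structure theorem of Wang: a compact complex parallelizable manifold is biholomorphic to $L/\Gamma$, where $L$ is a connected simply connected complex Lie group and $\Gamma\subset L$ is a discrete cocompact subgroup, with $\pi_1(\widetilde X)\cong\Gamma$. Since $\dim_{\mathbb C}L=n\ge 1$ and a positive-dimensional simply connected complex Lie group is never compact (a compact connected complex Lie group is a torus, hence not simply connected), the cocompact discrete subgroup $\Gamma$ must be infinite. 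Finally the finite covering $\widetilde X\to X$ realizes $\Gamma\cong\pi_1(\widetilde X)$ as a finite-index subgroup of $\pi_1(X)$, so $\pi_1(X)$ is infinite. This part is routine once Theorem \ref{hyperell} is in hand.

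For (2) the first step is to produce a fibration whose fibres are controlled by (1). By Theorem \ref{mainsemiample}, $K_X=\det\Omega^1_X$ is semiample, and since $X$ is projective the associated morphism is a model of the Iitaka fibration $f\colon X\to Y$ onto a normal projective $Y$ with $\dim Y=k(X)<n$; hence the general fibre $F$ is positive-dimensional with $K_F$ torsion, so $k(F)=0$. From the conormal sequence $0\to N^*_{F/X}\to\Omega^1_X|_F\to\Omega^1_F\to 0$, in which $N^*_{F/X}$ is trivial, $\Omega^1_F$ is a quotient bundle of $\Omega^1_X|_F$; as weak semiampleness passes both to restrictions and to quotient bundles (a surjection $E\twoheadrightarrow Q$ induces a closed immersion $\PP(Q)\hookrightarrow\PP(E)$ along which $\oo(1)$ restricts), $\Omega^1_F$ is weakly semiample. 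Thus $F$ falls under part (1): $\pi_1(F)$ is infinite, and since $F$ is projective its parallelizable cover is an abelian variety, so $F$ is a free finite quotient of an abelian variety.

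The remaining and genuinely delicate step is to pass from the infinitude of $\pi_1(F)$ to that of $\pi_1(X)$; this is the main obstacle, because a fibre with infinite fundamental group can sit inside a simply connected total space (elliptic $K3$ surfaces fibred over $\PP^1$ are the standard warning). Here I would exploit the rigidity coming from weak semiampleness: $\oo_{\PP(\Omega^1_X)}(1)$ is nef, so $\Omega^1_X$ is nef and $X$ contains no rational curve (a nonconstant $\PP^1\to X$ would yield a negative-degree quotient of the nef bundle $\Omega^1_X|_{\PP^1}$), which excludes the elliptic-$K3$ type degeneration. Assuming $\pi_1(X)$ finite and replacing $X$ by its universal cover, I may assume $X$ simply connected, whence $Y$ is simply connected as well. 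I would then argue that $f$ is isotrivial: its fibres have Kodaira dimension $0$ and admit good minimal models (being étale quotients of abelian varieties), so the Kawamata--Viehweg subadditivity estimate bounds the variation of $f$ by $\dim Y-k(Y)$, and since $k(X)=\dim Y$ there is no room for positive variation. Granting isotriviality together with the absence of rational curves, $X$ becomes, after a finite étale cover, a locally trivial fibre bundle with abelian-quotient fibre, and tracking the lattice $\pi_1(A)\subset\pi_1(F)$ through the homotopy sequence of the bundle yields an infinite subgroup of $\pi_1(X)$, contradicting simple connectivity. The hardest point to make rigorous is precisely this last globalization: controlling the monodromy of the isotrivial family so that the fibre lattice genuinely survives in $\pi_1(X)$, which is where the no-rational-curves hypothesis (ruling out $\pi_1$-killing contractions) must be used decisively.
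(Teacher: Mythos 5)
Your part (1) is correct and is essentially the argument the paper uses: reduce via Theorem \ref{hyperell} (equivalently Theorem \ref{hyperell2}) to a compact complex parallelizable manifold $\widetilde X = H/\Gamma$ covering $X$ with finite degree, and observe that $\Gamma$, hence $\pi_1(X)$, must be infinite. The paper's Remark \ref{remfundparall} phrases this as ``if $\Gamma$ is finite then $H$ is compact, hence a torus, hence has infinite $\pi_1$''; your version via the simply connected model is an equivalent reformulation. No issue there.

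Part (2), however, has a genuine gap, and it sits exactly where you locate it. Your route to isotriviality does not work: the subadditivity statement available here (Kawamata's $C^+_{n,m}$ for fibres admitting good minimal models) reads $k(X)\geq k(F)+\max(\mathrm{Var}(f),k(Y))$, and with $k(X)=\dim Y$ and $k(F)=0$ this only yields $\mathrm{Var}(f)\leq \dim Y$, which is the trivial bound satisfied by every fibration; it gives no isotriviality. (Your quoted bound ``$\mathrm{Var}(f)\leq \dim Y-k(Y)$'' is not a form of the theorem I recognize, and in any case when $Y$ is of general type it would again say nothing.) Moreover, even granting isotriviality, upgrading the Iitaka fibration to a locally trivial bundle after a finite \'etale cover --- ruling out multiple and singular fibres (which need not contain rational curves), trivializing the family after an \'etale rather than merely finite base change, and controlling the monodromy so that the fibre lattice injects into $\pi_1(X)$ --- is precisely the hard analytic/algebraic content of H\"oring's structure theorem. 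The paper does not reprove any of this: it applies Theorem \ref{mainsemiample} to get $\omega_X$ semiample and then directly invokes H\"oring's Theorem \ref{hoering}, which produces a finite \'etale cover $X'\cong A\times Y$ with $A$ an abelian variety of dimension $n-k(X)>0$, so that $\pi_1(X)\supseteq$ a finite-index subgroup containing $\pi_1(A)\cong\ZZ^{2(n-k(X))}$. Your preliminary observations (semiampleness of $K_X$, weak semiampleness of $\Omega^1_F$ via restriction and the conormal quotient, absence of rational curves) are all correct, but they do not close the gap; as written, part (2) is an outline of a theorem you would still need to prove, not a proof.
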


The structure of this work is the following:
in Section \ref{notation} we recall the basic definitions and previous results;
in Section \ref{lemmas} we prove the main results concerning semiample vector bundles, which we consider of interest in themselves;
in Section \ref{main1} we prove the main results concerning parallelizable manifolds and their quotients;
in Section \ref{conclusion} we obtain some information on the fundamental groups of manifolds with semiample cotangent bundle,  sketch the ideas on a corresponding bimeromorphic classification, and add further questions on the topics treated here.

\subsection{Acknowledgements}

We are deeply thankful for many interesting and pleasant conversations 
with our colleagues, especially with Y. Brunebarbe, F. Catanese, S. Diverio.

This research was partially funded by INDAM group GNSAGA,
BIRD research project BIRD201444 
``Varieties with low Kodaira dimension: Hyperkaehler manifolds, Fano manifolds, abelian varieties and parallelizable compact manifolds'',
PRIN research project 
``Curves, Ricci flat Varieties and their Interactions'', PRIN research project "Algebraic and geometric aspects of Lie theory".

\section{Notation and previous results}
\label{notation}

In order to construct properly the resultants associated to 
sections of the symmetric product of a vector bundle, 
we need to describe in a  detailed way the costruction
of the projectivization of a vector bundle.

\subsection{Notation for projective spaces and projectivization}
\label{projectivization}

Throughout this work we will
follow Grothendieck's notation for projective spaces, \emph{i.e.} 
$\PP(V)$ is the space of $1$-dimensional quotients of the vector space $V$.
All the following is standard notation, but needs to be specified in detail
to avoid confusion when constructing the resultants (cf. Lemma \ref{lemmaresult} below).

In the case of $1$-dimensional quotients,  $\PP(V)$ is defined as 
\[ \PP(V) :=  \mathrm{Proj}_{\CC}(\bigoplus_{m\geqslant 0} \Sym^m V) ~,
\]
so   $\oo_{\PP(V)}(1)$ is the tautological quotient line bundle on $\PP(V)$,
and we have Euler exact sequence which corresponds to evaluation of global sections:
\[
0 \to M \to V \otimes \oo_{\PP(V)} \to \oo_{\PP(V)} (1) \to 0 ~.
\]
Furthermore 
 $H^0(\PP(V), \oo _{\PP(V)}(m)) = \Sym^m V$, with
 $\oo_{\PP(V)}(m) = \oo_{\PP(V)}(1)^{\otimes m} $.
 
 Within this notation, to any element 
\[
\sigma \in \Sym^m V = H^0(\PP(V), \oo _{\PP(V)}(m)) ~,
\]
 which is a polynomial of degree $m$, corresponds a zero locus $Z(\sigma) \subseteq \PP(V)$ which is the hypersurface of degree $m$ where the section $\sigma$ vanishes.
 
Fixing a basis $(e_1, \dots, e_r)$ for the vector space $V$,
it induces a dual basis $(\epsilon_1, \dots, \epsilon_r)$, 
then a  point $p \in \pp(V)$ is a one dimensional quotient and therefore a non-vanishing
linear combination $\lambda_1 \epsilon_1 + \dots + \lambda_r \epsilon_r$ (up to multiplication by a scalar).
Elements in $\Sym^m V \cong \Sym^m V^{**}$ are then polynomials to be evaluated on 
elements of $V^*$, 
\emph{i.e.} these are polynomials of degree $m$ in the variables 
$(\lambda_1, \dots , \lambda_r)$. 
%


Given a holomorphic vector bundle $E$,
we denote
$\pi \colon \PP(E) \to X$  the projective bundle of 1-dimensional quotients
of $E$. 
As above  
\[
\PP(E) := \mathrm{Proj}_{\oo_X}(\bigoplus_{m\geqslant 0} \Sym^m E) ~,
\]
and it comes with a Euler exact sequence of vector bundles
\[
0 \to M \to \pi^* E \to \oo_{\PP(E)} (1) \to 0 ~,
\]
where
$\oo_{\PP(E)} (1)$ is a 
line bundle on $\PP(E)$, and 
$\pi^* E \twoheadrightarrow \oo_{\PP(E)} (1)$
is the  \emph{tautological quotient}.

Over a point $x \in X$ we have 
$
\pi^{-1} (x) = \PP(E(x))$, and  
$\oo_{\PP(E)} (1)_{|\pi^{-1} (x)} = \oo_{\PP(E(x))} (1)$
 is the usual 
tautological line bundle quotient of the projective space described above.


Furthermore, there is a canonical isomorphism 
$\pi_* \oo_{\PP(E)} (m) \cong Sym^m E$ which gives on global sections:
\[
H^0(\PP(E),  \oo_{\PP(E)} (m)) \cong H^0(X, Sym^m E) ~.
\]

As above, to any section $\sigma \in  H^0(X, Sym^m E)$ corresponds a section 
$\sigma \in H^0(\PP(E),  \oo_{\PP(E)} (m))$, whose 
zero locus $Z(\sigma) \subseteq \PP(E)$ is a hypersurface, this hypersurface, 
restricted to a fiber $\PP(E(x))$ of $\pi$, is a degree  $m$ hypersurface.

Fixing a local frame   for $E$ over an open subset 
$U \subseteq X$, 
\emph{i.e.} fixing $r$ sections $e_1, \dots, e_r$ of $E$ over  $U$ that provide a basis
$\{e_1(x), \dots, e_r(x) \}$ 
of $E(x)$ 
for each $x \in U$,
we get the 
dual basis $\{ \epsilon_1(x), \dots, \epsilon_r(x) \}$ of $E(x)^*$.
Any element of $\PP(E(x))$ is a  rank one quotient of $E(x)$, so it is
a non-vanishing
linear combination $\lambda_1 \epsilon_1(x) + \dots + \lambda_r \epsilon_r(x)$ up to multiplication by a scalar. 
Any section 
\[
\sigma \in H^0(\PP(E),  \oo_{\PP(E)} (m))\cong H^0(X, Sym^m E) ~,
\]
when restricted to a $U$ can be written as 
\[
\sigma = \sum_{|I| = m} \alpha_I e_1^{i_1}\dots e_r^{i_r}
\]
with $\alpha_I \in \oo_X(U)$.
On each
fiber $\PP(E(x))$, for $x \in U$, the section $\sigma$ corresponds to
 the degree $m$ polynomial 
 \[
 P_x (\lambda_1, \dots , \lambda_r) = \sum_{|I| = m} \alpha_I(x) \lambda_1^{i_1}\dots \lambda_r^{i_r}
 \]
in the variables $(\lambda_1, \dots , \lambda_r)$. 
 
 Changing local frame $f_1, \dots, f_r$ of $E$ over the same open subset $U$, \emph{i.e.} changing trivialization for the vector bundle $E$ over $U$, 
 we have a transition  function $\phi \colon U \to GL_r(\CC)$ such that 
\[
(e_1(x) \dots e_r(x)) = (f_1(x) \dots f_r(x)) \phi(x)
\]
for all $x \in U$.
Then the same section 
\[\sigma \in H^0(\PP(E),  \oo_{\PP(E)} (m))\cong H^0(X, Sym^m E)
\]
will be written locally with respect to the new frame as 
\[
\sigma = \sum_I \beta_I f_1^{i_1}\dots f_r^{i_r}
\]
with $\beta_I \in \oo_X(U)$, corresponding to the polynomial
\[
Q_x (\mu_1, \dots , \mu_r) = \sum_I \beta_I(x) \mu_1^{i_1}\dots \mu_r^{i_r}
\]
on each $\PP(E(x))$.

Now, as 
\[
\sigma = \sum_I \beta_I f_1^{i_1}\dots f_r^{i_r} = 
 \sum_{I} \alpha_I e_1^{i_1}\dots e_r^{i_r} ~,
\]
writing $f^I = f_1^{i_1}\dots f_r^{i_r}$ and 
$f.\phi = (f_1 \dots f_r) \phi$, 
we have:
\[
\sigma = \sum_{I} \alpha_I e^{I} = \sum_{I} \alpha_I (f.\phi)^{I} 
= \sum_{I} \beta_I f^{I} ~.
\]

Then, according to the description above, the two polynomials satisfy:
\[
Q_x (\mu_1, \dots , \mu_r) = P_x ({}^t \phi (x)\cdot (\mu_1, \dots , \mu_r)) ~ \forall x \in U ~,
\]
where the coordinates $(\mu_1, \dots , \mu_r)$ are meant in a column, multiplied by the matrix ${}^t \phi (x)$ on the left.

Letting $x$ vary in $U$ we have an equality
\[
Q_x (\mu_1, \dots , \mu_r) = P_x ({}^t\phi \cdot (\mu_1, \dots , \mu_r)) 
\in \oo_X(U)[\mu_1, \dots , \mu_r]
\]
with 
$\phi \in \mathrm{GL}_{r}(\oo_X(U))$, and ${}^t\phi$ the transpose of the matrix.

\subsection{Previous results}

We recall the following definitions from \cite{mistrurbi} and \cite{mistrettaav}:

\begin{definition}

Let $X$ be a compact complex manifold, and let $E$ be a holomorphic vector bundle.

\begin{enumerate}

\item We say that $E$ is \emph{weakly semiample} if $\oo_{\PP(E)} (m)$ is a base point 
free line bundle on $\PP(E)$ for some $m>0$.

\item We say that $E$ is \emph{strongly semiample} if $Sym^m E$ is  generated 
by its global (holomorphic) sections 
for some $m>0$.

\item We say that $E$ is \emph{asymptotically generically generated}
if there exists a Zariski open subset $U\subset X$ such that $Sym^m E$ is  generated 
by its global sections over the points $x\in U$,
for some $m>0$.

\end{enumerate}

\end{definition}

We want to show that compact parallelizable manifolds and their quotients are related to strong and weak semiampleness of their cotangent bundle.

\begin{remark}

Clearly if $E$ is strongly semiample then $E$ is weakly semiample,
in fact if we have a surjective map
\(
V\otimes \oo_X \twoheadrightarrow Sym^m E
\)
then we have 
\[
V\otimes \oo_{\PP(E)} \twoheadrightarrow \pi^* Sym^m E \twoheadrightarrow \oo_{\PP(E)} (m) ~,
\]
so $\oo_{\PP(E)} (m) $ is globally generated if $Sym^m E$ is globally generated.

Surprisingly the vicecersa does hold only in the case $m=1$:
suppose  the evaluation map 
$\varphi \colon H^0(X,E) \otimes \oo_X \to E$ is \emph{not} surjective on $x\in X$.
Then its image is contained in a hyperplane $H_x$ of $E(x)$:
\[
\varphi (x) \colon H^0(X,E) \to H_x \subset E(x) ~.
\]
This means that on the point 
\[\xi \in \pi^{-1}(x) = \PP(E(x)) \subset \PP(E)
\]
corresponding to the hyperplane $H_x \subset E(x)$, the evaluation map
\[
\psi \colon H^0(\PP(E),  \oo_{\PP(E)} (1)) = 
H^0(X, E) \to \pi^* E_{| \xi} \to  \oo_{\PP(E)} (1)_{|\xi}
\]
vanishes on $\xi$,
therefore $\xi$ is a base point for $\oo_{\PP(E)} (1)$. So 
$\oo_{\PP(E)} (1)$ is globally generated on $\PP(E)$ if and only if 
$E$ is globally generated on $X$.

However the viceversa does not hold for all $m$: we will see below that the cotangent bundle of a hyperelliptic (also called bielliptic) surface is a weakly semiample vector bundle which is not strongly semiample. This is a general property of quotients of compact complex parallelizable manifolds.

\end{remark}

\begin{definition}
Let $X$ be a complex manifold.
We say that $X$ is a 
\emph{parallelizable complex manifold} 
if the holomorphic cotangent bundle $\Omega^1_X$ is trivial, \emph{i.e.} 
if $\Omega^1_X \cong \oo_X^{\oplus n}$.
\end{definition}

\begin{remark}

It is known (cf. \cite{wang}) that a compact complex manifold $X$ is parallelizable if and only if it is isomorphic to a 
quotient $X=H/\Gamma$ of a complex Lie group $H$ by a (cocompact) discrete subgroup 
$\Gamma$.
If the group $H$ is abelian (therefore $X = H/\Gamma$ is a compact complex abelian Lie group) then $X$ is a complex torus.
Also, Wang shows that a compact complex parallelizable
manifold is Kähler if and only if it is a complex torus.

\end{remark}

\begin{remark}

A compact complex manifold whose tangent (or cotangent) bundle is trivial as $\mathcal{C}^{\infty}$ vector bundle is a called \emph{parallelizable differentiable manifold} 
but it is not necessarily a parallelizable complex manifold: for example a Hopf Surface $X$ has $\mathcal{C}^{\infty}$ trivializable tangent space as it is diffeomorphic to 
$S^1 \times S^3$, but it is not a complex parallelizable manifold as 
$h^0(X, \Omega^1_X) = 1$.

\end{remark}

\begin{definition}
We say that a compact Kähler manifold $X$ is \emph{hyperelliptic} if it is a quotient of a complex torus by the free action of a finite group.  
We say that a non-Kähler compact complex manifold is \emph{twisted hyperelliptic} if it is a quotient of a (non-Kähler) compact complex parallelizable manifold (cf. \cite{catanesecorvaja}). 
\end{definition}

\section{Semiample vector bundles on compact complex manifolds}
\label{lemmas}

We prove here Theorem \ref{mainsemiample}, a key ingredient in the characterization of quotients of compact complex parallelizable manifolds.
We state some previous results, which are of interest themselves. 

\begin{proposition}
Let $f \colon X \to Y$ be a surjective map between complex manifolds, and let $E$ be a holomorphic vector bundle on $Y$. Then $E$ is weakly semiample if and only if $f^* E$ is weakly semiample.
\end{proposition}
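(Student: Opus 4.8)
The plan is to reduce the statement to a descent property for the semiampleness of a single line bundle along the morphism induced by $f$ on projectivizations. Since the formation of $\PP(-)$ commutes with base change, there is a Cartesian square with $\PP(f^*E)\cong X\times_Y\PP(E)$, an induced morphism $F\colon\PP(f^*E)\to\PP(E)$ lying over $f$, and a canonical identification $F^*\oo_{\PP(E)}(1)\cong\oo_{\PP(f^*E)}(1)$ (the tautological quotient pulls back to the tautological quotient). As $f$ is surjective and the manifolds are compact, $F$ is proper and surjective, and, using the fibre description $\pi^{-1}(x)=\PP(E(x))$ recalled above, its fibre over a point $\xi\in\PP(E(y))$ is canonically isomorphic to the fibre $f^{-1}(y)$. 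By the definition of weak semiampleness it thus suffices to prove that $\oo_{\PP(E)}(1)$ is semiample if and only if $F^*\oo_{\PP(E)}(1)$ is.

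The forward implication is immediate and does not even use surjectivity: if $\oo_{\PP(E)}(m)$ is base point free, then the images under $F^*$ of a generating family of global sections generate $F^*\oo_{\PP(E)}(m)\cong\oo_{\PP(f^*E)}(m)$ at every point, so $f^*E$ is weakly semiample.

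The content is the reverse implication, which I would establish by showing that semiampleness descends along the proper surjective map $F$. The difficulty is that $f$, hence $F$, need not have connected fibres, so global sections upstairs need not descend. To handle this I would take the Stein factorization $f=f_2\circ f_1$, with $f_1\colon X\to Y'$ proper with connected fibres and $f_2\colon Y'\to Y$ finite surjective, where $Y'$ is a normal compact complex space, and factor $F=F_2\circ F_1$ accordingly through $\PP(f_2^*E)=Y'\times_Y\PP(E)$. For the connected part $F_1\colon\PP(f^*E)\to\PP(f_2^*E)$, note that $\PP(f^*E)$ is obtained from $f_1$ by the flat base change $\PP(f_2^*E)\to Y'$; flat base change together with normality gives $F_{1*}\oo_{\PP(f^*E)}=\oo_{\PP(f_2^*E)}$, so the projection formula identifies $H^0(\PP(f^*E),\oo_{\PP(f^*E)}(m))$ with $H^0(\PP(f_2^*E),\oo_{\PP(f_2^*E)}(m))$, and base point freeness upstairs descends to $\PP(f_2^*E)$ since $F_1$ is surjective.

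It then remains to descend base point freeness along the finite surjective map $F_2\colon\PP(f_2^*E)\to\PP(E)$ onto the smooth, hence normal, base $\PP(E)$, and this is the step requiring the most care; here one uses $\oo_{\PP(f_2^*E)}(m)\cong F_2^*\oo_{\PP(E)}(m)$. I would argue pointwise together with a genericity argument: for a fixed $\xi\in\PP(E)$ the preimage $F_2^{-1}(\xi)$ is finite, and for each of its points the sections of $\oo_{\PP(f_2^*E)}(m)$ not vanishing there form a nonempty Zariski open subset of the finite dimensional space of global sections; since $\CC$ is infinite, a single section $s_\xi$ avoids the whole fibre $F_2^{-1}(\xi)$. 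Taking the norm of $s_\xi$ along $F_2$ (which is a regular section because $\PP(E)$ is normal) produces a section of the fixed power $\oo_{\PP(E)}(md)$, with $d=\deg F_2$, that is nonzero at $\xi$, since its value there is the product of the values of $s_\xi$ on $F_2^{-1}(\xi)$. Letting $\xi$ vary shows $\oo_{\PP(E)}(md)$ is base point free, so $E$ is weakly semiample. The main obstacle throughout is precisely the possible disconnectedness of the fibres of $f$: it forces the Stein factorization and, in the finite part, the passage to norms, which is the only place where surjectivity of $f$ is genuinely used and where the normality of the intermediate space $Y'$ must be invoked.
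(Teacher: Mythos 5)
Your proposal is correct, and it follows the paper's proof only up to the first reduction: both arguments form the Cartesian square $\PP(f^*E)\cong X\times_Y\PP(E)$ with induced map $F$ over $f$ and use the identification $F^*\oo_{\PP(E)}(1)\cong\oo_{\PP(f^*E)}(1)$ to reduce everything to the statement that semiampleness of a line bundle ascends and descends along the proper surjective map $F$. At that point the paper simply invokes Fujita's lemma on semiample sheaves, whereas you prove the descent statement from scratch: Stein factorization of $F$ into a proper connected-fibre part, handled by flat base change and the projection formula ($F_{1*}\oo=\oo$ identifies the spaces of sections, so base point freeness descends), followed by a finite surjective part onto the normal space $\PP(E)$, handled by choosing a section avoiding a finite fibre (a finite union of proper linear subspaces of the section space cannot be everything) and taking its norm to land in $\oo_{\PP(E)}(md)$. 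This is essentially a proof of the special case of Fujita's result that is actually needed; what it buys is a self-contained argument that makes visible exactly where surjectivity, properness, and normality enter, at the cost of length and of invoking the analytic Stein factorization, flat base change, and the norm construction along a finite surjective map onto a normal base, each of which deserves a reference in its own right. The paper's citation is the shorter route; your version is a legitimate and complete alternative.
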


\begin{proof}
We have the following  fibered product:
\[
\squaremap{\pp(f^* E)}{\widetilde{f}}{\pp(E)}{}{}{X}{f}{Y}
\]
such that $\oo_{\pp(f^* E)}(1) = \widetilde{f}^* \oo_{\pp(E)}(1)$.
Then by 
\cite{fujita}
we have that 
$\oo_{\pp(f^* E)}(1)$ is semiample if and only if 
$\oo_{\pp(E)}(1)$ is semiample.

\end{proof}

\begin{remark}

The statement does not hold if we replace weak semiampleness by strong semiampleness:
even  for finite surjective étale maps $f\colon X \to Y$ we can have  $f^* E$ strongly semiample and $E$ just weakly semiample. We show below that the cotangent bundle of a hyperelliptic surface and its pullback to the abelian surface provide a counterexample.

\end{remark}

The following theorem is proven in \cite{mistrettaparma} 
(cf. Theorem 3.2 and Remark 5.3, \emph{ibid.}):

\begin{theorem}
Let $E$ be a strongly semiample vector bundle of rank $r$ over a compact complex manifold $X$, 
whose determinant has Iitaka-Kodaira dimension  $kod(X, \det E) = 0$.
Then $E\cong L^{\oplus r}$, where $L$ is a torsion line bundle on $X$.
\end{theorem}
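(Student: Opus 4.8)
The plan is to show that the two hypotheses force $\Sym^m E$ to be a genuinely \emph{trivial} vector bundle, and then to extract the splitting $E\cong L^{\oplus r}$ from that triviality. First I would fix $m>0$ with $\Sym^m E$ globally generated (strong semiampleness) and compute the determinant: by the splitting principle $\det(\Sym^m E)\cong(\det E)^{\otimes a}$ with $a=\binom{m+r-1}{r}$. Since $\Sym^m E$ is globally generated, so is $\det(\Sym^m E)$, which is therefore a base-point-free line bundle whose Iitaka--Kodaira dimension equals $kod(X,\det E)=0$. A base-point-free line bundle of Iitaka dimension $0$ is trivial, so $(\det E)^{\otimes a}\cong\oo_X$; in particular $\det E$ is torsion.

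Next I would upgrade triviality of the determinant to triviality of the whole bundle. Global generation of $F:=\Sym^m E$ gives a classifying holomorphic map $\psi\colon X\to\gr$ into a Grassmannian of quotients, with $F\cong\psi^*Q$ for the universal quotient $Q$ and $\det F\cong\psi^*\oo_{\gr}(1)$. As $\oo_{\gr}(1)$ is ample while its pullback $\det F$ is trivial, the map $\psi$ must be constant (on a compact connected manifold the pullback of an ample line bundle is trivial only if the image is a point). Hence $F=\Sym^m E\cong\oo_X^{\oplus s}$ with $s=\binom{m+r-1}{r-1}$.

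The hard part is the rigidity step that turns triviality of $\Sym^m E$ into triviality of the projective bundle $\PP(E)$. Because $\Sym^m E$ is trivial, for every $x$ the restriction $W:=H^0(X,\Sym^m E)\to\Sym^m E(x)$ is an isomorphism, so the base-point-free system $\oo_{\PP(E)}(m)$ defines a morphism $f\colon\PP(E)\to\PP(W)$ whose restriction to each fibre $\PP(E(x))\cong\PP^{r-1}$ is the \emph{complete} $m$-th Veronese embedding, with image a Veronese variety $V_x$. I would then argue that $x\mapsto V_x$ is a holomorphic map into the variety of Veronese subvarieties of $\PP(W)$, which is the homogeneous space $\mathrm{PGL}(W)/\mathrm{PGL}_r$ (the projective stabilizer of a Veronese variety being exactly $\mathrm{PGL}_r$, acting through $\Sym^m$). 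By Matsushima's criterion this space is affine, since $\mathrm{PGL}_r$ is reductive; a holomorphic map from a compact connected manifold to an affine variety is constant, so all the $V_x$ coincide. This unmovability of the Veronese is where the real content lies, and the points to be most careful about are the holomorphic dependence of $V_x$ on $x$ and the precise identification of the parameter space with an affine homogeneous space.

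Finally I would read off the splitting. Constancy of $V_x$ makes $(\pi,f)\colon\PP(E)\iso X\times\PP^{r-1}$ an isomorphism over $X$ under which $\oo_{\PP(E)}(m)\cong\mathrm{pr}_2^*\,\oo_{\PP^{r-1}}(m)$. Using $\pic(X\times\PP^{r-1})\cong\pic(X)\oplus\ZZ$ together with the fact that $\oo_{\PP(E)}(1)$ restricts to $\oo(1)$ on each fibre, one gets $\oo_{\PP(E)}(1)\cong\mathrm{pr}_1^*L\otimes\mathrm{pr}_2^*\oo_{\PP^{r-1}}(1)$ for a line bundle $L$ on $X$ with $L^{\otimes m}\cong\oo_X$. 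Pushing forward by $\pi=\mathrm{pr}_1$ then yields $E=\pi_*\oo_{\PP(E)}(1)\cong L\otimes H^0(\PP^{r-1},\oo(1))\otimes\oo_X\cong L^{\oplus r}$ with $L$ torsion, as desired.
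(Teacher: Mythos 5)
Your argument is correct. Note that the paper does not actually prove this statement: it imports it verbatim from \cite{mistrettaparma} (Theorem 3.2 and Remark 5.3), so there is no in-text proof to compare against; what you have written is a valid self-contained substitute. Your route --- (1) base-point-free plus Iitaka dimension $0$ forces $(\det E)^{\otimes a}$ to be trivial, (2) the classifying map to the Grassmannian is constant because the Plücker pullback $\det(\Sym^m E)$ is trivial, hence $\Sym^m E\cong\oo_X^{\oplus s}$, (3) the fibrewise Veronese images $V_x\subset\PP(W)$ cannot move because the orbit $\mathrm{PGL}(W)/\mathrm{PGL}_r$ is affine by Matsushima and $X$ is compact, (4) $\PP(E)\cong X\times\PP^{r-1}$ and the Picard group of a product splits off the twist $L$ with $L^{\otimes m}\cong\oo_X$ --- is sound at every step, and the two points you flag yourself are exactly the ones to nail down. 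For (3) you can avoid any appeal to Chow or Hilbert parameter spaces: a local frame of $E$ over $U_\alpha$ gives a holomorphic lift $g_\alpha\colon U_\alpha\to\mathrm{PGL}(W)$ with $V_x=g_\alpha(x)\cdot V_0$, the transition $g_\alpha^{-1}g_\beta$ takes values in the stabilizer $N\cong\mathrm{PGL}_r$ (which is all of the projective stabilizer because a linear automorphism fixing the linearly nondegenerate $V_0$ pointwise is the identity), so the $g_\alpha$ glue to a holomorphic map $X\to\mathrm{PGL}(W)/N$ into a Stein homogeneous space, which must be constant. Step (2) can also be shortened: choose $s$ sections of $\Sym^m E$ generating the fibre at one point; the determinant of the resulting map $\oo_X^{\oplus s}\to\Sym^m E$ is a holomorphic function on compact connected $X$, hence a nonzero constant, so the map is an isomorphism everywhere. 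Neither remark is a gap; the proof stands as written.
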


As an immediate consequence we have the following

\begin{corollary}
\label{strong}

Let $E$ be a strongly semiample vector bundle of rank $r$ over a compact complex manifold $X$, 
whose determinant has Iitaka-Kodaira dimension  $kod(X, \det E) = 0$.
Then there exists a finite \emph{cyclic} étale covering $\rho \colon \widetilde{X} \to X$ such that 
${\rho}^* E$ is a trival vector bundle. 

\end{corollary}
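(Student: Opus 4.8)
The plan is to reduce immediately to the previous theorem and then run the cyclic covering construction. By the theorem just stated, the hypotheses $kod(X,\det E)=0$ and strong semiampleness give $E \cong L^{\oplus r}$ with $L$ a torsion line bundle on $X$. It therefore suffices to produce a finite cyclic étale cover $\rho \colon \widetilde{X} \to X$ on which $\rho^* L$ becomes trivial, since then $\rho^* E \cong (\rho^* L)^{\oplus r} \cong \oo_{\widetilde{X}}^{\oplus r}$ is trivial, which is exactly the claim.

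First I would let $d>0$ be the exact order of $L$ in $\pic(X)$, so that $L^{\otimes d} \cong \oo_X$ while no smaller positive power is trivial, and fix such an isomorphism; equivalently, fix a nowhere-vanishing holomorphic section $s \in H^0(X, L^{\otimes d})$. Next I would carry out the standard cyclic covering construction in the analytic category: form the sheaf of $\oo_X$-algebras $\mathcal{A} := \bigoplus_{i=0}^{d-1} L^{\otimes(-i)}$, with multiplication $L^{\otimes(-i)} \otimes L^{\otimes(-j)} \to L^{\otimes(-(i+j))}$ when $i+j<d$ and, when $i+j \geq d$, using the isomorphism $L^{\otimes(-d)} \cong \oo_X$ induced by $s$. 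Then set $\widetilde{X} := \spec_{\oo_X}(\mathcal{A})$ (analytic relative spectrum), with its finite projection $\rho \colon \widetilde{X} \to X$ of degree $d$.

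Then I would verify the three properties that make $\rho$ the desired cover. For étaleness: because $s$ is nowhere-vanishing, in a local trivialization of $L$ the cover is cut out by $t^d = (\text{unit})$, which is smooth and unramified over $X$; hence $\widetilde{X}$ is a complex manifold and $\rho$ is étale. For cyclicity: multiplication by a fixed primitive $d$-th root of unity on the grading of $\mathcal{A}$ defines a free action of $\ZZ/d\ZZ$ on $\widetilde{X}$ with $\widetilde{X}/(\ZZ/d\ZZ) \cong X$, so $\rho$ is Galois with cyclic group $\ZZ/d\ZZ$. For connectedness of $\widetilde{X}$: this is where I would use the minimality of $d$, since a nontrivial splitting of the cover would force $L$ to have order strictly dividing $d$. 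Finally, on $\widetilde{X}$ the tautological section $t$ of $\rho^* L^{\otimes(-1)}$ satisfies $t^d = \rho^* s$ and is therefore nowhere-vanishing, trivializing $\rho^* L^{\otimes(-1)}$ and hence $\rho^* L$; this yields $\rho^* E \cong \oo_{\widetilde{X}}^{\oplus r}$ as required.

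The main obstacle I expect is making the cyclic covering construction rigorous in the analytic (non-projective) setting and verifying connectedness of $\widetilde{X}$: the algebra-theoretic construction is formally identical to the projective case, but one must ensure the analytic relative spectrum is again a compact complex manifold and that the minimality of the torsion order $d$ genuinely produces a connected degree-$d$ cyclic cover. Everything else — étaleness from the nowhere-vanishing of $s$, the $\ZZ/d\ZZ$-action, and the triviality of $\rho^* L$ — follows routinely from the local description $t^d = s$.
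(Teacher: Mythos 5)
Your proposal is correct and is exactly the argument the paper has in mind: the preceding theorem gives $E \cong L^{\oplus r}$ with $L$ torsion, and the standard cyclic covering associated to an isomorphism $L^{\otimes d} \cong \oo_X$ (which the paper treats as immediate and does not spell out) trivializes $L$ and hence $E$. Your extra care about étaleness, connectedness via the exact order $d$, and the analytic relative spectrum is sound but fills in details the paper leaves implicit rather than taking a different route.
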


Another immediate consequence of strong semiampleness of vector bundles is the fact of having a semiample determinant bundle,
in fact if $\Sym^mE$ is globally generated then $\det (\Sym^m E) = (\det E)^{\otimes N}$ is globally generated.
In the weakly semiample case this needs to be proven more carefully. 

Recall that, given a commutative ring $k$, and $r$ homogeneous polynomials
in $r$ variables
$f_1, \dots, f_r \in k[Y_1, \dots, Y_r]$, then the resultant 
$\mathrm{Res}(f_1, \dots, f_r)$ is a well defined element of $k$
 and vanishes if and only if
the polynomials $f_1, \dots, f_r$ have a common non-trivial zero.
    We need some lemmas:

\begin{lemma}[\cite{jouanolou} 5.13]
\label{lemmajouan}
    Let $k$ be a commutative ring, $d_1, \dots , d_r \geqslant 1$ positive integers,
    $f_1, \dots, f_r \in k[Y_1, \dots, Y_r]$ homogeneous polynomials of degrees 
    $d_1, \dots, d_r$. For every matrix $\phi= (a_{ij}) \in \mathcal{M}_{r \times r}(k)$ 
    and every polynomial $f \in k[Y_1, \dots, Y_r]$ set
    \[
    f \circ \phi = f(a_{11}Y_1+a_{12}Y_2 + \dots + a_{1n}Y_N, \dots,
    a_{n1}Y_1+a_{n2}Y_2+ \dots + a_{nn}Y_N)=
    \]
    \[
=    f(\phi (Y_1, \dots , Y_r)) ~.
    \]
Then the resultant satisfies the following equality:
\[
\mathrm{Res}(f_1\circ \phi, \dots, f_r\circ \phi)=
\det (\phi)^{d_1d_2\dots d_r} \mathrm{Res}(f_1, \dots, f_r) ~.
\]

\end{lemma}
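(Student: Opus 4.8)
The plan is to regard the claimed equality as an identity of polynomials in a universal coefficient ring and to determine the correction factor by combining the vanishing criterion with the structural properties of the resultant. First I would pass to the generic situation: replace $k$ by the polynomial ring $R = \mathbb{Z}[(a_{ij})_{i,j}, (c_{I,j})_{I,j}]$, where the $a_{ij}$ are the entries of $\phi$ and, for each $j$, the $c_{I,j}$ (indexed by multi-indices $I$ with $|I| = d_j$) are the indeterminate coefficients of a generic homogeneous polynomial $f_j = \sum_{|I| = d_j} c_{I,j} Y^I$. Since the resultant is a polynomial with integer coefficients in the coefficients of its arguments and $f_j \mapsto f_j \circ \phi$ is given by polynomials in the $a_{ij}$, both sides of the asserted identity are elements of $R$, and it suffices to establish the identity in $R$; the general case over an arbitrary $k$ then follows by specialization $R \to k$. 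Working over the fraction field of $R$ and, after base change, over an algebraic closure, I may further assume $\phi$ invertible, since $\{\det\phi = 0\}$ is nowhere dense and the identity extends over it by Zariski density.

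Next I would exploit the vanishing criterion geometrically. For invertible $\phi$ the substitution $Y \mapsto \phi(Y)$ is a linear automorphism, so $(f_1 \circ \phi, \dots, f_r \circ \phi)$ has a common nontrivial zero at $Y$ precisely when $(f_1, \dots, f_r)$ has one at $\phi(Y)$; this is a bijection between the two zero sets. Hence, as polynomials in the coefficient variables $c_{I,j}$ with $\phi$ fixed and generic, $\mathrm{Res}(f_1 \circ \phi, \dots, f_r \circ \phi)$ and $\mathrm{Res}(f_1, \dots, f_r)$ cut out the same hypersurface. The resultant $\mathrm{Res}(f_1, \dots, f_r)$ is (absolutely) irreducible as a polynomial in the coefficients of its arguments, hence generates a prime ideal; since $\mathrm{Res}(f_1 \circ \phi, \dots)$ vanishes on that reduced hypersurface, it is divisible by $\mathrm{Res}(f_1, \dots, f_r)$. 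Moreover the substitution $f_j \mapsto f_j \circ \phi$ is linear within each coefficient block $\{c_{I,j}\}_I$, so it preserves the multidegree $\big(\prod_{l \neq 1} d_l, \dots, \prod_{l \neq r} d_l\big)$ in those blocks. Comparing multidegrees in the divisibility relation forces the quotient to be independent of the $c_{I,j}$, giving
\[
\mathrm{Res}(f_1 \circ \phi, \dots, f_r \circ \phi) = c(\phi)\,\mathrm{Res}(f_1, \dots, f_r)
\]
for some $c(\phi) \in \mathbb{Z}[(a_{ij})]$ depending only on $\phi$.

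It then remains to evaluate $c(\phi)$, which I would do on the specialization $f_j = Y_j^{d_j}$. Here $\mathrm{Res}(Y_1^{d_1}, \dots, Y_r^{d_r}) = 1$ by the normalization of the resultant, while $f_j \circ \phi = L_j^{d_j}$ with $L_j = \sum_i a_{ji} Y_i$, so $c(\phi) = \mathrm{Res}(L_1^{d_1}, \dots, L_r^{d_r})$. Using the multiplicativity of the resultant in each argument, which for a factorization $f = gh$ reads $\mathrm{Res}(\dots, gh, \dots) = \mathrm{Res}(\dots, g, \dots)\,\mathrm{Res}(\dots, h, \dots)$, I can strip the exponents one slot at a time to obtain $\mathrm{Res}(L_1^{d_1}, \dots, L_r^{d_r}) = \mathrm{Res}(L_1, \dots, L_r)^{d_1 d_2 \cdots d_r}$. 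Finally, for linear forms the resultant equals the determinant of the coefficient matrix, $\mathrm{Res}(L_1, \dots, L_r) = \det(\phi)$, whence $c(\phi) = \det(\phi)^{d_1 d_2 \cdots d_r}$ and the formula follows.

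The step I expect to be the main obstacle is the passage from ``same zero set'' to ``the correction factor is coefficient-free and $\mathrm{Res}(f_1, \dots, f_r)$ appears to the first power'', since this silently uses the full structural theory of the resultant: that it is the irreducible generator of the elimination ideal and carries the stated multidegree. If one preferred to avoid invoking irreducibility, the alternative would be to realize the resultant through a Poisson-type product over common roots and to verify the transformation by direct manipulation of that product, which is considerably more cumbersome; the irreducibility-plus-multidegree argument is the cleanest route. One should also note that the final identity on the locus $\det\phi = 0$ is obtained only a posteriori, by Zariski density, since the root-bijection argument requires $\phi$ to be invertible.
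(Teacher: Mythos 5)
The paper offers no proof of this lemma: it is imported verbatim from Jouanolou (5.13) and used as a black box. Your argument is a correct, self-contained derivation and is essentially the classical route to this covariance formula: pass to the universal ring $\mathbb{Z}[(a_{ij}),(c_{I,j})]$, use the vanishing criterion over an algebraically closed field together with the bijection of common zeros under an invertible substitution to conclude that $\mathrm{Res}(f_1,\dots,f_r)$ (being irreducible, hence generating a prime ideal) divides $\mathrm{Res}(f_1\circ\phi,\dots,f_r\circ\phi)$, use the multihomogeneity of degree $\prod_{l\neq j}d_l$ in each coefficient block to see the cofactor is independent of the $c_{I,j}$, and pin it down via the normalization $\mathrm{Res}(Y_1^{d_1},\dots,Y_r^{d_r})=1$, multiplicativity in each slot, and the identity $\mathrm{Res}(L_1,\dots,L_r)=\det\phi$ for linear forms; your evaluation step also rules out the degenerate possibility that the left-hand side vanishes identically in the $c$-variables, which the multidegree comparison alone would not exclude. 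The one caveat worth stating is that every input you invoke --- absolute irreducibility of the generic resultant, its multidegree, multiplicativity, the vanishing criterion, and the values on powers of variables and on linear forms --- is itself part of the structural theory developed in Jouanolou, so your proof is not more elementary than the citation it replaces; it is, however, a complete and correct assembly of those standard facts, and it makes transparent where the exponent $d_1d_2\cdots d_r$ comes from, which the bare citation does not.
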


\begin{lemma}
\label{lemmaresult}

Let $E$ be a rank $r$ holomorphic  vector bundle on a compact complex manifold $X$. 
Consider r sections $\sigma_1, \dots, \sigma_r$, with 
$\sigma_j \in H^0(\pp(E), \oo_{\pp(E)}(m_j))$, and $m_j>0$ positive integers.

Then there is a section 
$\mathrm{Res}(\sigma_1, \dots, \sigma_r) \in H^0(X, \det(E)^{\otimes m_1\cdot m_2 \dots m_r})$ 
vanishing exactly on the points $x\in X$ such that the sections
$ \sigma_{1 |\pp(E(x))} , \dots , \sigma_{r |\pp(E(x))}$ have a common zero in 
$\pp(E(x))$.
\end{lemma}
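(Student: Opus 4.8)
The plan is to build the section $\mathrm{Res}(\sigma_1, \dots, \sigma_r)$ by gluing together the classical fiberwise resultants, and to check the gluing using the transformation formula of Lemma \ref{lemmajouan}. The guiding idea is that over each fiber $\pp(E(x)) \cong \pp^{r-1}$ the sections $\sigma_j$ restrict to homogeneous polynomials of degrees $m_j$ in $r$ variables, so the classical resultant of $r$ such forms is exactly the invariant detecting a common projective zero; the only real task is to verify that these scalars organize into a global section of the correct line bundle.

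First I would fix a local holomorphic frame $(e_1, \dots, e_r)$ of $E$ over an open set $U \subseteq X$. As recalled above, each $\sigma_j$ then corresponds over $U$ to a homogeneous polynomial $P^{(j)}_x(\lambda_1, \dots, \lambda_r) = \sum_{|I| = m_j} \alpha^{(j)}_I(x)\, \lambda_1^{i_1}\cdots \lambda_r^{i_r}$ of degree $m_j$, with coefficients $\alpha^{(j)}_I \in \oo_X(U)$. Taking $k = \oo_X(U)$ and forming the resultant of these $r$ forms in $r$ variables, I obtain
\[
R_U := \mathrm{Res}(P^{(1)}, \dots, P^{(r)}) \in \oo_X(U).
\]
Since the resultant is a fixed universal polynomial with integer coefficients in the coefficients of the forms, $R_U$ is a holomorphic function on $U$, and it commutes with the evaluation homomorphism $\oo_X(U) \to \cc$, $g \mapsto g(x)$. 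Hence for each $x \in U$ the value $R_U(x)$ is the classical resultant over $\cc$ of $P^{(1)}_x, \dots, P^{(r)}_x$, which vanishes if and only if these forms have a common nontrivial zero, i.e. a common zero in $\pp(E(x))$; this is precisely the condition that $\sigma_{1|\pp(E(x))}, \dots, \sigma_{r|\pp(E(x))}$ meet.

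It then remains to glue. Setting $N := m_1 \cdots m_r$ and changing to a second frame $(f_1, \dots, f_r)$ over $U$ with $(e_1 \dots e_r) = (f_1 \dots f_r)\phi$, the computation recalled above shows that the associated polynomials satisfy $Q^{(j)}_x(\mu) = P^{(j)}_x({}^t\phi(x)\cdot \mu)$, i.e. $Q^{(j)} = P^{(j)} \circ {}^t\phi$ in the notation of Lemma \ref{lemmajouan}. Applying that lemma with the matrix ${}^t\phi \in \GL_r(\oo_X(U))$ and degrees $m_1, \dots, m_r$ gives
\[
\mathrm{Res}(Q^{(1)}, \dots, Q^{(r)}) = \det({}^t\phi)^{N}\,\mathrm{Res}(P^{(1)}, \dots, P^{(r)}) = \det(\phi)^{N}\, R_U .
\]
On the other hand, under the same change of frame the local generator $(e_1 \wedge \dots \wedge e_r)^{\otimes N}$ of $\det(E)^{\otimes N}$ is multiplied by exactly $\det(\phi)^{N}$, since $e_1 \wedge \dots \wedge e_r = \det(\phi)\,(f_1 \wedge \dots \wedge f_r)$. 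Thus the functions $R_U$ transform as the coefficients of a section of $\det(E)^{\otimes N}$ relative to these frames, so they patch to a global holomorphic section $\mathrm{Res}(\sigma_1, \dots, \sigma_r) \in H^0(X, \det(E)^{\otimes N})$. Its zero locus is frame-independent (as $\det(\phi)$ never vanishes) and, by the fiberwise interpretation above, is exactly the set of $x$ where the restricted sections share a common zero.

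The step I expect to require the most care is the matching of exponents in the gluing: one must confirm that the power $N = m_1 \cdots m_r$ of $\det(\phi)$ produced by Lemma \ref{lemmajouan} agrees with the transition function $\det(\phi)^{N}$ of $\det(E)^{\otimes N}$, and that the transpose appearing in the change-of-frame formula is harmless because $\det({}^t\phi) = \det(\phi)$. Once this compatibility is established, both the holomorphy of the glued section and the precise description of its vanishing set follow formally from the universality of the resultant and its behaviour under specialization of coefficients.
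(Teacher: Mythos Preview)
Your proposal is correct and follows essentially the same route as the paper's proof: write the $\sigma_j$ locally as homogeneous polynomials in a frame, take the classical resultant to get a holomorphic function on each chart, and use Lemma \ref{lemmajouan} to see that these differ by $\det(\phi)^{m_1\cdots m_r}$ under a change of frame, hence glue to a section of $\det(E)^{\otimes m_1\cdots m_r}$. The only cosmetic difference is that the paper phrases the gluing via a fixed open cover with transition functions $\phi_{\alpha\beta}$, whereas you compare two frames on a single $U$; your extra remarks on universality/specialization and on $\det({}^t\phi)=\det(\phi)$ are a welcome clarification but do not change the argument.
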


\begin{proof}
   Let us define the  section $\mathrm{Res}(\sigma_1, \dots, \sigma_r)$ of
   $H^0(X, \det(E)^{\otimes m_1\cdot m_2 \dots m_r})$ locally using charts.
   For this purpose, we fix an open covering $\{ U_\alpha \}$ of the manifold $X$ for which there are local trivializations 
   \[
   \left\{
   \begin{tikzcd}
       \varphi_\alpha : E_{|U_\alpha} \ar[r, "\cong"] & \CC^r \times U_\alpha
   \end{tikzcd}
   \right\}
   \]
   of the vector bundle $E$ over the open subsets $U_\alpha$. Moreover, let
   \[
   \left\{
   \begin{tikzcd}
       \phi_{\alpha\beta} : U_{\alpha\beta} \ar[r] & \GL_r(\CC)
   \end{tikzcd}
   \right\}
   \]
   be the set of transition functions attached to the local trivializations $\varphi_\alpha$, where $U_{\alpha\beta} := U_\alpha \cap U_\beta$; i.e., one has
   $(\varphi_\beta)_{|U_{\alpha\beta}}
   \circ 
   (\varphi_\alpha)_{|U_{\alpha\beta}}^{-1} (v, p) = (\phi_{\alpha\beta} (p) \cdot v , p )$, for $ (v, p) \in \CC^r \times U_{\alpha \beta}$.
   
   Let $e^{\alpha}_1, \dots, e^{\alpha}_r$ be a local frame corresponding to $\varphi_{\alpha}$, and let  $Y_{1, \alpha}, \ldots, Y_{r, \alpha}$ be variables corresponding to dual coordinates in the chart $\varphi_\alpha$. The global sections $\sigma_1, \ldots , \sigma_r$ correspond in the local chart $\varphi_\alpha$ to homogeneous polynomials $f_{1, \alpha}(Y_{1, \alpha}, \ldots, Y_{r, \alpha})$, $\ldots$ , $f_{r, \alpha}(Y_{1, \alpha}, \ldots, Y_{r, \alpha})$ of degrees $m_1, \dots m_r$
   with coefficients in $\oo_X(U_{\alpha})$, as seen in Section \ref{projectivization} above.
   
Furthermore, for $\varphi_\beta$ another local chart, and for corresponding local frame
$e^{\beta}_1, \dots, e^{\beta}_r$, one has 
\[
(e_1^{\alpha}(x) \dots e^{\alpha}_n(x)) = (e^{\beta}_1(x) \dots e^{\beta}_r(x)) \phi_{\alpha\beta}(x)
\]
 for all $x \in U_{\alpha \beta}$, and the
 global sections $\sigma_1, \ldots , \sigma_r$ correspond in the local chart $\varphi_\beta$ 
 to homogeneous  polynomials $f_{1, \beta}(Y_{1, \beta}, \ldots, Y_{r, \beta})$, $\ldots$ , $f_{r, \beta}(Y_{1, \beta}, \ldots, Y_{r, \beta})$
  with coefficients in $\oo_X(U_{\beta})$. 
  Then, according to computation of Section \ref{projectivization}
 we have:
 \[
f_{i, \beta} (Y_{1, \beta}, \ldots, Y_{r, \beta}) = 
f_{i,\alpha}({}^t\phi_{\alpha \beta} (Y_{1, \beta}, \ldots, Y_{r, \beta}))
 \]
   over $U_{\alpha \beta}$.

Now on each open subset $U_{\alpha}$ we have functions
\[
R_{\alpha}=\mathrm{Res}(f_{1, \alpha}(Y_{1, \alpha}, \ldots, Y_{r, \alpha}), \ldots , f_{r, \alpha}(Y_{1, \alpha}, \ldots, Y_{r, \alpha})) \in \oo_X(U_{\alpha}),
\]
and by Lemma \ref{lemmajouan}
we have for each $\alpha, \beta$:
\[
R_{\beta} = \det (\phi_{\alpha \beta})^{m_1 \dots m_r} R_{\alpha}
\]
on $U_{\alpha \beta}$. Then the functions $R_{\alpha} \in \oo_X(U_{\alpha})$
 glue to a global section 
 \[
 \mathrm{Res}(\sigma_1, \dots, \sigma_r) \in H^0(X,\det (E)^{m_1\dots m_r})
 \]
 which has the required properties.

\end{proof}


\begin{theorem}

Let $E$ be a weakly semiample vector bundle  over a compact complex manifold $X$, 
then $\det E$ is a semiample line bundle. 
More precisely, if $\oo_{\pp(E)}(m)$ is base point free on $\PP(E)$, then 
$\det (E)^{\otimes m^r}$ is base point free on $X$.

\end{theorem}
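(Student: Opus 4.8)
The plan is to prove base point freeness of $\det(E)^{\otimes m^r}$ (where $r = \rk E$) one point at a time, producing the required nonvanishing section via the resultant of Lemma \ref{lemmaresult}. Fix an arbitrary point $x \in X$, with fiber $\pi^{-1}(x) = \PP(E(x)) \cong \PP^{r-1}$. It suffices to exhibit a global section of $\det(E)^{\otimes m^r}$ that does not vanish at $x$; since $x$ is arbitrary this gives base point freeness, and in particular semiampleness of $\det E$. The idea is that by Lemma \ref{lemmaresult}, taking all $m_j = m$, any $r$ sections $\sigma_1, \dots, \sigma_r \in H^0(\PP(E), \oo_{\PP(E)}(m))$ whose restrictions to $\PP(E(x))$ have \emph{no} common zero yield a resultant section $\mathrm{Res}(\sigma_1, \dots, \sigma_r) \in H^0(X, \det(E)^{\otimes m^r})$ not vanishing at $x$.

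First I would examine the restriction of the system $|\oo_{\PP(E)}(m)|$ to the fiber over $x$. Let $W$ denote the image of the restriction map $H^0(\PP(E), \oo_{\PP(E)}(m)) \cong H^0(X, \Sym^m E) \to H^0(\PP(E(x)), \oo_{\PP(E(x))}(m))$. Since $\oo_{\PP(E)}(m)$ is base point free on all of $\PP(E)$ by hypothesis, for every $\xi \in \PP(E(x))$ some global section does not vanish at $\xi$, and its restriction lies in $W$; hence $W$ is base point free on $\PP(E(x)) \cong \PP^{r-1}$. The associated morphism $\phi_W \colon \PP^{r-1} \to \PP(W^*)$ satisfies $\phi_W^* \oo(1) = \oo_{\PP^{r-1}}(m)$, which is ample, so $\phi_W$ is finite onto its image and that image has dimension $r-1$.

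Next I would select $r$ members of $W$ with empty common zero locus on the fiber. The common zero of $r$ members of $W$ is the $\phi_W$-preimage of the intersection of $r$ hyperplanes in $\PP(W^*)$, i.e. of a general linear subspace of codimension $r$. Because $\phi_W(\PP^{r-1})$ has dimension $r-1 < r$, a standard general position (Bertini-type) dimension count shows such a general linear subspace is disjoint from the image, so the corresponding members $\bar\sigma_1, \dots, \bar\sigma_r \in W$ have no common zero on $\PP(E(x))$. Lifting them to global sections $\sigma_1, \dots, \sigma_r \in H^0(\PP(E), \oo_{\PP(E)}(m))$ restricting to the $\bar\sigma_i$, Lemma \ref{lemmaresult} with $m_1 = \dots = m_r = m$ furnishes $\mathrm{Res}(\sigma_1, \dots, \sigma_r) \in H^0(X, \det(E)^{\otimes m^r})$ vanishing exactly where the fiber restrictions share a common zero; by construction this does not happen at $x$, so the section is nonzero at $x$. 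As $x$ was arbitrary, $\det(E)^{\otimes m^r}$ is base point free.

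I expect the main obstacle to be the general position step: justifying that $r$ members of the (possibly incomplete) base point free linear system $W$ on $\PP^{r-1}$ can be chosen with empty common intersection. This rests on the finiteness of $\phi_W$ and the dimension of its image, together with a Bertini-type argument, and one should check the extremal case where $\dim W$ is as small as $r$ — for instance the finite map $[x_0^m : \cdots : x_{r-1}^m]$, where the $r$ coordinate powers already have no common zero — so that the degenerate codimension count still yields an empty intersection.
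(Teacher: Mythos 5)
Your proposal is correct and follows essentially the same route as the paper: restrict the base point free system $|\oo_{\pp(E)}(m)|$ to the fiber $\PP(E(x))$, use finiteness of the induced map to see the image has dimension $r-1$, pick $r$ sections whose hyperplanes miss that image, and apply the resultant lemma to get a section of $\det(E)^{\otimes m^r}$ not vanishing at $x$. The only cosmetic difference is that you phrase the general-position step via the restricted system $W$ and the map $\phi_W$ on $\PP^{r-1}$, while the paper restricts the global map $\varphi \colon \PP(E) \to \PP^N$ to the fiber; these are the same argument.
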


\begin{proof}

As $E$ is weakly semiample, let us consider the holomorphic map
\[
\varphi \colon \PP(E) \to \PP^N = \PP (H^0(\pp(E), \oo_{\pp(E)}(m))) 
\]
induced by the complete linear system
$H^0(\pp(E), \oo_{\pp(E)}(m))$ which is base point free  for a suitable $m>0$.

Let us fix $x \in X$, as $\oo_{\PP(E)} (m)_{|\pi^{-1} (x)} = \oo_{\PP(E(x))} (m)$, and 
$\varphi_{|\PP(E(x))}$ is the map associated to the base point free linear system 
\[|W_x| = | \mathrm{Im} (H^0(\pp(E), \oo_{\pp(E)}(m)) \to H^0(\pp(E(x)), \oo_{\PP(E(x))}(m)))|\]
on $\PP(E(x))$, then $\varphi_{|\PP(E(x))}$ is finite, therefore 
$\dim  \varphi ({\PP(E(x))}) = \dim \PP(E(x)) = r-1$, where $r = \rk E$.
So we can find $r$
hyperplanes in $\PP^N$ whose intersection does not meet $\varphi ({\PP(E(x))})$.
This means that $r$ general sections 
$\sigma_1, \dots, \sigma_r \in H^0(\pp(E), \oo_{\pp(E)}(m))$ satisfy
$Z \cap \pi^{-1} (x) = \emptyset$, where 
$Z = Z(\sigma_1, \dots, \sigma_r)$ is the locus of their common zeros in $\PP(E)$.

Now let $V = \pi (Z)$ be the image of this locus  in $X$,
then  Lemma \ref{lemmaresult} states that 
$V = Z(\mathrm{Res}(\sigma_1, \dots, \sigma_r)) \subseteq X$ is a divisor in 
$|\det (E)^{\otimes m^r}|$, and $x \notin V$.
Therefore for $x \in X$ we  find a divisor in 
$|\det (E)^{\otimes m^r}|$ whose support does not contain $x$.
As for each $x \in X$ we can find such a divisor 
in $|\det (E)^{\otimes m^r}|$ not containing $x$,
then
$\det (E)^{\otimes m^r}$ is base point free.

\end{proof}

\begin{remark}

The locus $V \subseteq X$  in the proof above can be empty,
this happens if $\dim H^0(\pp(E), \oo_{\pp(E)}(m)) = \rk E$, 
for example when $E$ is trivial, 
or in the case of the theorem below. In this case 
$\det (E)^{\otimes m^r}$ is trivial.

\end{remark}

\begin{remark}

This proof follows Fujiwara's ideas contained in  \cite{fujiwara},
adapting his constructions to the compact complex case, where we cannot make use of Chern
 classes computations in the Chow ring, through the use of the resultant.
The following theorem as well follows Fujiwara's constructions.

\end{remark}

As a consequence of the semiampleness of the determinant, we can show the following

\begin{theorem}
\label{semiample}
Let $E$ be a weakly semiample vector bundle of rank $r$ over a compact complex manifold $X$, 
whose determinant has Iitaka-Kodaira dimension  $kod(X, \det E) = 0$.
Then there exists a finite Galois étale covering $\rho \colon \widetilde{X} \to X$ such that 
$\rho^* E$ is a trivial vector bundle.

\end{theorem}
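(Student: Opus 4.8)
The plan is to reduce to the strongly semiample situation already handled by Corollary \ref{strong}, after first passing to an auxiliary cover that trivializes the determinant.

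First I would record that $\det E$ is torsion. By the theorem just proven, weak semiampleness of $E$ forces $\det E$ to be semiample, say $\det(E)^{\otimes k}$ base point free. Now a base point free line bundle $L$ with $\mathrm{kod}(X,L)=0$ defines a morphism to projective space whose image is a single point, so $\det(E)^{\otimes k}$ is the pullback of $\oo$ from a point and is therefore trivial; hence $\det E$ is a torsion line bundle, of some order $N$. Next I would take the standard cyclic cover: since $(\det E)^{\otimes N}\cong\oo_X$, the $\oo_X$-algebra $\bigoplus_{i=0}^{N-1}(\det E)^{\otimes i}$ defines a finite étale Galois cover $\rho_1\colon X_1\to X$ with group $\ZZ/N$ on which $\rho_1^*\det E\cong\oo_{X_1}$. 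By the first Proposition of this section $E_1:=\rho_1^*E$ is again weakly semiample, and now has trivial determinant. It thus suffices to treat the case $\det E\cong\oo$, after which I compose $\rho_1$ with the cover produced below and replace the composite by its Galois closure; this is precisely why the resulting cover is only Galois, and not cyclic, in general.

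The crux is then the following structural claim: a weakly semiample $E_1$ with $\det E_1\cong\oo$ is trivial on a finite étale cover. The main geometric input comes from Lemma \ref{lemmaresult} together with the triviality of the determinant. For $r$ general sections $\sigma_1,\dots,\sigma_r\in H^0(\PP(E_1),\oo_{\PP(E_1)}(m))$ the resultant lies in $H^0(X_1,\det(E_1)^{\otimes m^r})=H^0(X_1,\oo_{X_1})$, hence is a constant, and for a general choice this constant is nonzero; this means that $r$ general sections have no common zero on any fibre $\PP(E_1(x))$. Therefore $\varphi=[\sigma_1:\dots:\sigma_r]\colon\PP(E_1)\to\PP^{r-1}$ is a genuine morphism whose restriction to each fibre $\PP(E_1(x))\cong\PP^{r-1}$ is finite; since $\varphi$ already maps into $\PP^{r-1}$ we get $\dim\varphi(\PP(E_1))\leq r-1$, so in fact $\dim\varphi(\PP(E_1))=r-1$ and every fibre maps finitely onto the image. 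In parallel I would note that $E_1$ is numerically flat: $\oo_{\PP(E_1)}(1)$ nef makes $E_1$ nef, and since $\det E_1\cong\oo$ one has $E_1^{*}\cong\wedge^{r-1}E_1$, which is nef as an exterior power of a nef bundle, so both $E_1$ and $E_1^{*}$ are nef.

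From here I would upgrade this to an actual trivialization: using the semiampleness (not merely the nefness) of $\oo_{\PP(E_1)}(1)$ and the fibrewise-finite morphism $\varphi$, I aim to conclude that $E_1$ is strongly semiample on $X_1$, at which point the structure theorem of \cite{mistrettaparma} gives $E_1\cong L^{\oplus r}$ with $L$ torsion, and a cyclic cover trivializing $L$ (as in Corollary \ref{strong}) trivializes $E_1$. The main obstacle is exactly this last upgrade — passing from a numerically flat bundle carrying a fibrewise-finite semiample morphism to one that is strongly semiample, equivalently to a representation of $\pi_1$ with finite image. The decomposition and flatness statements one would like to invoke are classically established in the Kähler or projective setting by Chern class computations, whereas here $X$ may be non-Kähler; so the argument must be run through the resultant and the explicit morphism $\varphi$ rather than intersection theory, and controlling the monodromy of the finite map $\varphi$ along $X_1$ is the delicate point that forces the passage to the further finite cover.
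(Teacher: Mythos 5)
Your reduction steps are sound and match the paper's opening moves: $\det E$ is torsion because it is semiample of Iitaka dimension $0$; the resultant of Lemma \ref{lemmaresult} lands in $H^0(X,\det(E)^{\otimes m^r})$ and forces the image of the map $\Phi\colon \PP(E)\to \PP^N$ to have dimension exactly $r-1$, with each fibre $\PP(E(x))$ mapping finitely onto it. (The preliminary cyclic cover trivializing $\det E$ and the remark on numerical flatness are harmless but end up unused.) The genuine gap is that everything after this point --- which is the actual content of the theorem --- is left as an announced intention. You say you ``aim to conclude that $E_1$ is strongly semiample'' and you yourself label this upgrade ``the main obstacle'' and ``the delicate point''; no argument is offered for it. Moreover it is not clearly the right intermediate statement: the theorem only asserts triviality after a \emph{further} étale cover, and a priori weak semiampleness plus trivial determinant on $X_1$ need not yield strong semiampleness on $X_1$ itself --- the whole point of the paper is that the two notions genuinely differ, and nothing in your setup controls the monodromy you would need to tame.

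The paper closes exactly this gap by a concrete construction that you would need to reproduce or replace. One takes a general fibre of $\Phi$; since $\Phi$ is finite on each $\PP(E(x))$, the components $Z$ of such a fibre are $n$-dimensional and map finitely and surjectively to $X$. Because $Z$ is contracted by $\Phi$ one has $\oo_{\PP(E)}(m)|_Z\cong\oo_Z$, and combining the canonical bundle formula for $\PP(E)$ with the torsion of $\det E$ gives $\omega_Z^{\otimes Km}\cong\pi^*\omega_X^{\otimes Km}$, whence $\pi\colon Z\to X$ is an unramified finite covering. A further cyclic cover $\widetilde{Z}\to Z$ extracts an $m$-th root so that $\oo_{\PP(E)}(1)|_{\widetilde{Z}}\cong\oo_{\widetilde{Z}}$; the pulled-back tautological quotient $h\colon g^*E\twoheadrightarrow\oo_{\widetilde{Z}}$ is then shown to split, by descending from the splitting of $\Sym^m h$ through the factorization $g^*\Sym^m E\to g^*\Sym^{m-1}E\to\oo_{\widetilde{Z}}$, so $g^*E\cong\oo_{\widetilde{Z}}\oplus E'$ with $E'$ weakly semiample of lower rank; one concludes by induction on $r$ and passage to the Galois closure. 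Without this mechanism (or an equivalent one) for producing the étale cover and peeling off trivial summands, your proposal does not prove the theorem.
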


\begin{proof}

Let us remark that within these hypotheses the line bundle 
$\det E$ is a torsion line bundle, in fact it is semiample and of Iitaka dimension $0$.
Now consider the map 
\[
\Phi \colon \PP(E) \to \PP^N = \PP (H^0(\pp(E), \oo_{\pp(E)}(m))) ~,
\]
then we can show that $\dim \Phi (\PP(E)) = r-1$:
if by contradiction the image were of dimension at least $r$, then it would have 
a non-empty intersection with
$r$ hyperplanes in $\PP^N$ corresponding to 
$r$ sections 
$\sigma_1, \dots, \sigma_r \in H^0(\pp(E), \oo_{\pp(E)}(m))$.
Then the zero locus
$Z(\mathrm{Res} (\sigma_1, \dots, \sigma_r)) \subset X$ 
would be a nonempty divisor in $|\det (E)^{\otimes m^r}|$,
contradicting the fact that $\det (E)$ is a torsion line bundle.
We remark that the restriction of $\Phi$ to each fibre 
$\pi^{-1}(x) = \PP(E(x))$ is given by the base point free linear system 
\[
W_x = \mathrm{Im} (H^0(\pp(E), \oo_{\pp(E)}(m)) \to H^0(\pi^{-1}(x), \oo_{\pp(E(x))}(m)))
\]
and is therefore a finite map 
$\Phi_{|\PP(E(x))} \colon \PP(E(x)) \to \Phi (\PP(E))$, which is a projection of a Veronese embedding. In particular it follows that $\dim \Phi (\PP(E))$ is exactly $r-1$.

Then a general fiber of the map $\Phi$ will be 
a disjoint union of 
compact complex submanifolds of $\PP(E)$ 
of dimension $n = \dim X$, dominating $X$ with finite maps.
In fact if the map 
\[
\pi_{|\Phi^{-1}(y)} \colon 
\Phi^{-1} (y) \to X
\]
were not surjective and finite, we would have a curve $C\subseteq \PP(E)$
contracted  by $\pi$ and by $\Phi$, 
and this is not possible as $\Phi_{|\PP(E(x))} $ is finite.

Let us call $Z$ one of these components dominating $X$,
and let us show that the map $Z \to X$ is a finite étale
covering, and that there is further finite étale 
covering $\widetilde{Z} \to Z$ such that 
$g \colon \widetilde{Z} \to X$ satisfies
\[
g^* E = \oo_{\widetilde{Z}} \oplus E^{\prime} ~.
\]

First, notice that as $Z$ is contracted by $\Phi$ then 
\[
\oo_{\PP(E)} (m)_{|Z} = \oo_Z ~,
\]
and that 
\(
\omega_Z = \omega_{\PP(E)} \otimes \oo_Z , \textrm{where }
\omega_{\PP(E)} = \oo_{\PP(E)}(-r-2) \otimes \pi^* (\omega_X \otimes \det E) ~.
\)

As $(\det E )^{\otimes K} = \oo_X$ for some $K>0$, we obtain that 
\[
\omega_Z^{\otimes Km} \cong \pi^* \omega_X^{\otimes Km} ~,
\]
but as $\pi$ is a finite map there is an injective morphism
$\pi^* \omega_X \to \omega_X$,
therefore we get $\omega_Z \cong \pi^*\omega_X$ and so 
$\pi \colon Z \to X$ is an unramified finite covering where 
$\oo_{\PP(E)} (m)_{|Z} \cong \oo_Z$.
 Using this last equality we see that we can find another étale covering 
$\widetilde{Z} \to Z$,
such that 
$\oo_{\PP(E)} (1)_{|\widetilde{Z}} \cong \oo_{\widetilde{Z}}$.

 Furthermore, the universal quotient $\pi^* E \twoheadrightarrow \oo_{\PP(E)}(1)$ pulls back to 
 a quotient $h \colon g^* E \twoheadrightarrow \oo_{\widetilde{Z}}$ on $\widetilde{Z}$, 
 and this quotient $h$ is split.
In fact from  the construction of $\widetilde{Z}$ it follows that the surjective map 
 $\Sym^m h \colon g^*\Sym^m E \to \oo_{\widetilde{Z}}$ splits, and this implies that $h$ splits as well: we can factor the map $h$ as
\[
 g^*\Sym^m E \to g^*\Sym^{m-1} E \to \oo_{\widetilde{Z}} ~,
\]
the first map being
\[
v_1\cdot v_2 \dots v_m \mapsto \sum \frac{h(v_j)}{m} v_1\cdot v_2 \dots \hat{v_j} \dots v_m ~,
\]
and the second  $\Sym^{m-1} h$, then the splitting of $\Sym^m h$ implies that $\Sym^{m -1}h$ splits as well and we proceed by recursive induction on $m$. 

Now we constructed a finite étale 
covering 
$g \colon \widetilde{Z} \to X$ such that
\(
g^* E = \oo_{\widetilde{Z}} \oplus E^{\prime} 
\), 
so $ E^{\prime}$ is again weakly semiample, 
and by recursive induction on $r = \rk E$ we obtain a finite étale map 
$\dbtilde{Z} \to X$ such that $E$ pulls back to a trivial vector bundle. 
 
Finally, by considering the Galois closure of the étale finite covering 
$\dbtilde{Z} \to X$ we obtain a {finite} étale  morphism
$\widetilde{X} \to \dbtilde{Z} $ such that 
$\rho \colon \widetilde{X} \to \dbtilde{Z} \to X$ 
is a finite Galois étale covering and that $\rho^* E$ is a trivial bundle (cf. Remark \ref{galois} below).

\end{proof}

\begin{remark}
\label{galois}

Given a finite étale covering space $Z \to X$, we can always find a further finite étale covering 
$Z^{\prime} \to Z$ such that $Z^{\prime} \to X$ is Galois. In fact if the covering $Z \to X$ corresponds 
to the finite index subgroup $H \leq \pi_1(X)$, then we can consider the covering $Z^{\prime} \to X$
corresponding to the maximal normal subgroup $H^{\prime} \trianglelefteq \pi_1(X)$ contained in $H$, 
and it can be shown that this has finite index in $\pi_1(X)$. The map $Z^{\prime} \to Z \to X$ is called 
Galois closure of the covering $Z \to X$.
\end{remark}

\begin{remark}

With the hypothesis of Theorem \ref{semiample} above, the determinant line bundle 
$\det E$ is semiample and of Kodaira dimension $0$, so it is a torsion line bundle.
Therefore there exists a finite cyclic Galois étale cover that trivializes the line bundle 
$\det (E)$,
however in general this is not the cover that trivializes $E$ itself, we might need to consider more complicated covers as it appears in the proof of 
the theorem.
In fact the cover trivializing $E$ needs not to be cyclic if $E$ is weakly semiample, while it is cyclic in the strongly semiample case.
    
\end{remark}

\section{Parallelizable Manifolds}
\label{main1}

In this section we give a characterization of parallelizable manifolds and their quotients.

An immediate consequence of Theorem \ref{semiample} above is the following

\begin{theorem}
\label{hyperell2}

Let $X$ be a compact complex manifold. Then $X$ is a (possibly twisted) hyperelliptic manifold if and only if 
it has Kodaira dimension $k(X) = 0$ and weakly semiample cotangent bundle.

\end{theorem}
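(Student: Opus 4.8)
The plan is to prove the two implications separately. The substantive direction---that Kodaira dimension $0$ together with weak semiampleness of $\Omega_X$ forces $X$ to be a (possibly twisted) hyperelliptic manifold---rests essentially on Theorem \ref{semiample}, while the converse follows from the functoriality of the cotangent bundle and of the Iitaka--Kodaira dimension under finite étale covers.

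First I would treat the substantive direction. Here I set $E = \Omega_X$, of rank $r = n = \dim X$, and observe that $\det E = \det \Omega_X = \omega_X$ is the canonical bundle, so that the hypothesis $k(X) = 0$ is exactly the condition $kod(X, \det E) = 0$. This places us precisely in the setting of Theorem \ref{semiample}, which produces a finite Galois étale covering $\rho \colon \widetilde{X} \to X$ with $\rho^* \Omega_X$ a trivial vector bundle. Since $\rho$ is étale, there is a canonical isomorphism $\rho^* \Omega_X \cong \Omega_{\widetilde{X}}$, whence $\Omega_{\widetilde{X}} \cong \oo_{\widetilde{X}}^{\oplus n}$ is trivial; that is, $\widetilde{X}$ is a complex parallelizable manifold. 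Writing $G = \mathrm{Gal}(\widetilde{X}/X)$ for the deck group, the fact that $\rho$ is unramified means that $G$ acts freely, so that $X \cong \widetilde{X}/G$ exhibits $X$ as a quotient of a parallelizable manifold by a finite free action, \emph{i.e.} as a (possibly twisted) hyperelliptic manifold.

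For the converse, I would start from a presentation $X \cong \widetilde{X}/G$ with $\widetilde{X}$ parallelizable and $G$ finite acting freely, so that the quotient map $\rho \colon \widetilde{X} \to X$ is a finite étale cover. The identification $\rho^* \Omega_X \cong \Omega_{\widetilde{X}} \cong \oo_{\widetilde{X}}^{\oplus n}$ shows that $\rho^* \Omega_X$ is trivial, hence strongly and in particular weakly semiample; applying the Proposition on pullbacks of weakly semiample bundles along surjective maps then yields that $\Omega_X$ itself is weakly semiample. For the Kodaira dimension I would use that $\rho$ étale gives $\omega_{\widetilde{X}} \cong \rho^* \omega_X$ with no ramification correction; since $\omega_{\widetilde{X}}$ is trivial we have $k(\widetilde{X}) = 0$, and because the Iitaka dimension is invariant under finite surjective pullback, $k(X) = kod(X, \omega_X) = kod(\widetilde{X}, \rho^* \omega_X) = k(\widetilde{X}) = 0$.

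The genuinely hard analysis has already been carried out in Theorem \ref{semiample}, so the main obstacle here is careful bookkeeping rather than new work. One must ensure that the covering supplied by that theorem is étale---so that the deck group acts freely, and so that pulling back the cotangent bundle recovers $\Omega_{\widetilde{X}}$ exactly, with no ramification contribution---and that the two formulations ``Kodaira dimension $0$'' and ``determinant of Iitaka dimension $0$'' genuinely coincide for $E = \Omega_X$. The single point in the converse where a purely formal invariance statement must be invoked, rather than read off from triviality of a bundle, is the invariance of the Iitaka--Kodaira dimension under the finite étale cover; once the étale hypothesis is in place this is standard, but it deserves to be flagged.
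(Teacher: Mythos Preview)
Your proof is correct and follows essentially the same approach as the paper: both directions hinge on Theorem~\ref{semiample} and the identification $\rho^*\Omega_X \cong \Omega_{\widetilde{X}}$ for an étale cover, exactly as you have written. Your treatment of the converse is in fact slightly more careful than the paper's, which asserts $k(X)=0$ and weak semiampleness of $\Omega_X$ without explicitly invoking the pullback Proposition or the invariance of Iitaka dimension under finite étale covers.
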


\begin{proof}
If $X$ is hyperelliptic then it is the quotient of a parallelizable manifold under the free action of a finite group, then $X$ has Kodaira dimension $0$ and $\Omega_X^1$ pulls back to a trivial bundle, 
therefore $\Omega^1_X$ is weakly semiample.

Viceversa, applying Theorem \ref{semiample} to the holomorphic cotangent bundle $\Omega^1_X$
we know that there exists a  finite Galois étale cover 
$f \colon \widetilde{X} \to X$ such that 
$f^* \Omega^1_X$ is a trivial vector bundle.
As $f$ is étale then $f^* \Omega^1_X \cong \Omega^1_{\widetilde{X}}$,
so $\widetilde{X}$ is a compact complex  parallelizable manifold and $X$ is a (possibly twisted) hyperelliptic manifold.
\end{proof}

The following corollaries are immediate:

\begin{corollary}

A compact K\"ahler manifold $X$ is hyperelliptic if and only if it has Kodaira dimension $0$ and weakly semiample cotangent bundle.

\end{corollary}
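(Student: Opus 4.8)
The plan is to deduce this corollary directly from Theorem \ref{hyperell2}, combined with Wang's characterization of Kähler parallelizable manifolds as complex tori (recalled in the remark following the definition of parallelizable manifolds, cf. \cite{wang}). The only additional ingredient is the elementary observation that a finite étale cover of a compact Kähler manifold is again Kähler.

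For the forward implication, suppose $X$ is hyperelliptic in the Kähler sense, that is, $X \cong T/G$ with $T$ a complex torus and $G$ a finite group acting freely. Since a complex torus is in particular a compact complex parallelizable manifold, $X$ is a (possibly twisted) hyperelliptic manifold in the sense of Theorem \ref{hyperell2}, and that theorem immediately yields $k(X) = 0$ together with weak semiampleness of $\Omega^1_X$.

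For the converse, assume $X$ is a compact Kähler manifold with $k(X) = 0$ and $\Omega^1_X$ weakly semiample. First I would apply Theorem \ref{hyperell2} to obtain a compact complex parallelizable manifold $\widetilde{X}$, a finite group $G$ acting freely, and an isomorphism $X \cong \widetilde{X}/G$; equivalently, a finite étale covering $f \colon \widetilde{X} \to X$ with $\widetilde{X}$ parallelizable. Then I would observe that $\widetilde{X}$ is itself Kähler: fixing a Kähler form $\omega$ on $X$, its pullback $f^*\omega$ is a closed, real, positive $(1,1)$-form on $\widetilde{X}$, because $f$ is a local biholomorphism. Finally, since $\widetilde{X}$ is a compact complex parallelizable manifold that is Kähler, Wang's theorem forces $\widetilde{X}$ to be a complex torus, whence $X \cong \widetilde{X}/G$ is a quotient of a complex torus by the free action of the finite group $G$, i.e. $X$ is hyperelliptic.

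The argument is essentially immediate once Theorem \ref{hyperell2} is available, so there is no serious obstacle; the only point requiring care is the stability of the Kähler condition under the finite étale cover $f$. I expect the cleanest justification to be the direct pullback of the Kähler form, using that étale means $f$ is unramified and hence a local biholomorphism, so that $f^*\omega$ remains positive definite. Alternatively one could average $f^*\omega$ over the deck transformation group $G$ to produce a $G$-invariant Kähler form descending appropriately, but the plain pullback already suffices.
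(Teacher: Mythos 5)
Your proposal is correct and follows exactly the route the paper intends: the paper declares this corollary ``immediate'' from Theorem \ref{hyperell2}, and the two points you supply --- that a finite \'etale cover of a compact K\"ahler manifold is again K\"ahler (by pulling back the K\"ahler form along the local biholomorphism), and that Wang's theorem then forces the parallelizable cover to be a complex torus --- are precisely the implicit steps. Nothing is missing.
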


\begin{corollary}

A smooth projective variety $X$ over the complex numbers is isomorphic to the quotient of an abelian variety by the free action of a finite group if and only if it has Kodaira dimension $0$ and weakly
 semiample cotangent bundle.

\end{corollary}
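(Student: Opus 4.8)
The plan is to reduce to the Kähler corollary just proved and then identify which complex tori can arise once $X$ is projective. Since a smooth projective variety is in particular a compact Kähler manifold, that corollary already tells us that, for $X$ smooth projective, the conditions ``$\kappa(X) = 0$ and $\Omega^1_X$ weakly semiample'' are equivalent to ``$X$ is hyperelliptic'', that is $X \cong T/G$ with $T$ a complex torus and $G$ a finite group acting freely. The entire remaining content is to promote the torus $T$ to an abelian variety under the projectivity hypothesis, and conversely to verify the two conditions for a free finite quotient of an abelian variety.

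For the implication from semiampleness to the abelian quotient structure, I would start with $X$ smooth projective satisfying $\kappa(X) = 0$ and $\Omega^1_X$ weakly semiample. The Kähler corollary furnishes a finite étale covering $\pi \colon T \to X$ with $T$ a complex torus and $G$ a finite group acting freely with $X \cong T/G$. Choosing an ample line bundle $L$ on $X$ with positive curvature form $\omega \in c_1(L)$, the pullback $\pi^*\omega$ is again a positive $(1,1)$-form on $T$, since $\pi$ is a local biholomorphism; hence $\pi^* L$ is a positive line bundle and Kodaira's embedding theorem shows that $T$ is projective. A projective complex torus is an abelian variety, so $X \cong T/G$ realizes $X$ as the free finite quotient of an abelian variety.

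For the converse, assume $X \cong A/G$ with $A$ an abelian variety and $G$ finite acting freely, so that $\pi \colon A \to X$ is finite étale. Because $\pi$ is étale one has $\pi^* \Omega^1_X \cong \Omega^1_A \cong \oo_A^{\oplus n}$; in particular $K_X$ pulls back to the trivial bundle $K_A$ and is therefore torsion, whence $\kappa(X) = 0$. Moreover $\pi^* \Omega^1_X$ is trivial, hence (strongly, and a fortiori) weakly semiample, so by the earlier Proposition on pullback of weakly semiample bundles along a surjective map we conclude that $\Omega^1_X$ is itself weakly semiample.

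The only step with genuine content is the passage from a complex torus to an abelian variety, and this is exactly where the projectivity of $X$ enters: finiteness of $\pi$ keeps the pulled-back ample class positive, so Kodaira's criterion applies to $T$. Everything else is a direct translation through Theorem \ref{hyperell2} and the Kähler corollary, together with the descent/pullback behaviour of weak semiampleness already established.
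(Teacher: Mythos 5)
Your proof is correct and follows the same route the paper intends: the paper declares this corollary ``immediate'' from Theorem \ref{hyperell2} and its K\"ahler specialization, and your argument simply fills in the implicit steps (the covering torus of a projective variety is projective by pulling back an ample class and applying Kodaira embedding, hence is an abelian variety; the converse via the \'etale pullback of $\Omega^1_X$ and the descent of weak semiampleness along surjective maps). Nothing in your write-up deviates from or adds to the paper's intended reasoning beyond making it explicit.
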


It can be shown that a  proper algebraic variety over the complex numbers
is a  compact complex parallelizable manifold if and only if it is an abelian veriety
(cf. \cite{winkelmann}),
so we have the following:

\begin{corollary}

A smooth proper algebraic variety $X$  over the complex numbers is isomorphic to the quotient of an abelian variety by the free action of a finite group if and only if it has Kodaira dimension $0$ and weakly
 semiample cotangent bundle.

\end{corollary}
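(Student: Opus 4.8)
The plan is to combine Theorem~\ref{hyperell2}, which identifies compact complex manifolds of Kodaira dimension $0$ with weakly semiample cotangent bundle as precisely the (possibly twisted) hyperelliptic manifolds, with the cited characterization of algebraic parallelizable manifolds as abelian varieties (\cite{winkelmann}). The only genuinely new ingredient relative to the projective corollary above is the verification that the parallelizable étale cover produced along the way is itself a proper algebraic variety, so that Winkelmann's theorem can be applied to it and force it to be abelian.

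I would first dispose of the easy implication. If $X \cong A/G$ with $A$ an abelian variety and $G$ a finite group acting freely, then the quotient map $\rho\colon A \to X$ is finite étale, so $\rho^*\Omega^1_X \cong \Omega^1_A \cong \oo_A^{\oplus n}$ is trivial, hence weakly semiample; by the descent of weak semiampleness along surjective maps (the Proposition above, via \cite{fujita}) $\Omega^1_X$ is itself weakly semiample. Since $\rho^*\omega_X \cong \omega_A$ for an étale morphism, Kodaira dimension is preserved along finite étale covers, and therefore $k(X) = k(A) = 0$.

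For the converse I would extract a parallelizable cover. Assuming $k(X) = 0$ and $\Omega^1_X$ weakly semiample, I apply Theorem~\ref{semiample} to $E = \Omega^1_X$, whose determinant $\det \Omega^1_X = \omega_X$ has Iitaka dimension $k(X) = 0$. This yields a finite Galois étale cover $\rho\colon \widetilde{X} \to X$ with $\rho^*\Omega^1_X$ trivial; as $\rho$ is étale we have $\rho^*\Omega^1_X \cong \Omega^1_{\widetilde{X}}$, so $\widetilde{X}$ is a compact complex parallelizable manifold on which $G = \mathrm{Gal}(\rho)$ acts freely with quotient $X$. It then remains only to identify $\widetilde{X}$ as abelian.

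The step I expect to require the most care is the algebraization of this cover. Here $X$ is a smooth proper algebraic variety, whereas $\rho$ is a priori only an analytic finite étale cover, so Winkelmann's result does not apply directly to $\widetilde{X}$. The point is that by the Riemann existence theorem (Grauert--Remmert), every finite topological cover of the analytification of a proper $\CC$-scheme is algebraizable; hence $\widetilde{X}$ is in fact a smooth proper algebraic variety and $\rho$ is finite étale in the algebraic category. Being simultaneously a compact complex parallelizable manifold and a proper algebraic variety, $\widetilde{X}$ is an abelian variety $A$ by \cite{winkelmann}, and therefore $X \cong A/G$ is the quotient of an abelian variety by the free action of the finite group $G$. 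As a consistency check, $A$ is then projective and $\rho$ finite, so $X$ is projective as well, in agreement with the projective corollary above.
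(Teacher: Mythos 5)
Your proof is correct and follows essentially the same route as the paper: the corollary is deduced from Theorem~\ref{hyperell2} (equivalently, from Theorem~\ref{semiample} applied to $\Omega^1_X$) together with Winkelmann's characterization of proper algebraic compact complex parallelizable manifolds as abelian varieties. Your explicit appeal to the Riemann existence theorem to algebraize the \'etale cover $\widetilde{X} \to X$ makes precise a step the paper leaves implicit, but it does not change the argument.
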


Next we want to prove that in a similar way we can characterize compact complex parallelizable manifolds as those  having Kodaira dimension $0$ and \emph{strongly} semiample cotangent bundle.

First we need a lemma on the structure of the group of 
automorphisms of a parallelizable manifold: 
we remark that a compact complex parallelizable manifold is a quotient 
$P = H / \Gamma$
of 
a complex Lie group $H$ by a (cocompact) discrete 
subgroup $\Gamma$ (cf. \cite{wang}). 
It can be 
shown that any holomorphic map of parallelizable manifolds, 
up to translations, comes
from a homomorphism of complex Lie groups:

\begin{lemma}
\label{autom}

Consider two compact complex parallelizable manifolds 
$P = H / \Gamma$ and $Q = H^{\prime} / \Gamma^{\prime}$,
and let 
$f \colon P \to Q$ be a holomorphic map.
Then there exists a homomorphism of complex Lie groups
$F \colon H \to H^{\prime}$, with $F(\Gamma) \subseteq \Gamma^{\prime}$,
and an element 
$q \in H^{\prime}$, such that for all 
$h \in H$:
\[
f(h \Gamma) = q  F(h) \Gamma^{\prime} ~.
\]

\end{lemma}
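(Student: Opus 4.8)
The plan is to use that the cotangent bundle of a complex Lie group is trivialized by its left-invariant (Maurer--Cartan) holomorphic $1$-forms, and to extract all the needed rigidity from the compactness of $P$ and $Q$. Write $p\colon H\to P$ and $p'\colon H'\to Q$ for the quotient covering maps and $\mathfrak h,\mathfrak h'$ for the Lie algebras. Since $P$ is parallelizable I may choose a frame $\theta^1,\dots,\theta^n$ of $H^0(P,\Omega^1_P)$ trivializing $\Omega^1_P$, and likewise a frame $\eta^1,\dots,\eta^m$ on $Q$. The key observation is that compactness of $P$ makes every global holomorphic function constant: therefore each $\mathrm d\theta^i$, being a global holomorphic $2$-form, expands as
\[
\mathrm d\theta^i=-\tfrac12\sum_{j,k}c^i_{jk}\,\theta^j\wedge\theta^k
\]
with constant coefficients $c^i_{jk}$, which are precisely the structure constants of $\mathfrak h$ (and similarly one obtains structure constants for $Q$). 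For the same reason the pullbacks $f^*\eta^a$ are global holomorphic $1$-forms on $P$, hence $f^*\eta^a=\sum_i A^a_i\,\theta^i$ with constant $A^a_i$.

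Applying $\mathrm d$ to $f^*\eta^a=\sum_i A^a_i\theta^i$ and comparing the two Maurer--Cartan relations, the constancy of the $A^a_i$ turns the equality $\mathrm d(f^*\eta^a)=f^*(\mathrm d\eta^a)$ into a single quadratic identity relating the matrix $(A^a_i)$ to the structure constants. This identity says exactly that the linear map $\phi\colon\mathfrak h\to\mathfrak h'$ dual to $(A^a_i)$ is a homomorphism of complex Lie algebras. Integrating $\phi$ then produces a holomorphic homomorphism of the simply connected covers, and hence (after the descent discussed below) a candidate homomorphism $F$ with $\mathrm dF=\phi$.

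To recover the stated form I would lift $f\circ p$ to a holomorphic map $\hat f$ of the universal covers and normalize it by setting $q:=\hat f(e)$. Both $\hat f$ and the integrated homomorphism then have the same Maurer--Cartan pullback $\phi$, so by the uniqueness clause of Cartan's theorem on maps with prescribed Maurer--Cartan form they differ by the left translation $L_q$; that is, $\hat f=L_q\circ F$ on the cover. Pushing this identity down along $p$ and $p'$ gives $f(h\Gamma)=q\,F(h)\,\Gamma'$. I expect the genuinely delicate step to be this last descent: one must check that the normalized lift is equivariant over $f$ for the deck actions, so that it carries $\Gamma$ into $\Gamma'$ (yielding $F(\Gamma)\subseteq\Gamma'$) and, if $H,H'$ are not assumed simply connected, that $\ker(\tilde H\to H)$ is sent into $\ker(\tilde H'\to H')$, so that $F$ really descends to the given groups $H\to H'$ rather than merely to their universal covers. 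The form-level computation in the first two paragraphs is routine once the Maurer--Cartan signs and indices are fixed; it is the covering-space and lattice bookkeeping that carries the real content.
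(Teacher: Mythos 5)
Your route is essentially the classical proof of Wang's theorem, which is precisely the content of the references the paper points to (the paper itself offers no argument beyond ``Cf.~\cite{catanesehyperII} and \cite{winkelmann}''), so the overall strategy --- constancy of coefficients from compactness, the Maurer--Cartan comparison showing the dual of $(A^a_i)$ is a Lie algebra homomorphism, integration, and Cartan uniqueness --- is the intended one and the form-level computation is correct. One convention needs fixing: with the quotient written as left cosets $h\Gamma$, the deck group acts by \emph{right} multiplication, so it is the right-invariant forms on $H$ that descend to $P$, and the uniqueness statement then gives $\hat f = R_c\circ\widetilde F$, i.e.\ $f(h\Gamma)=\widetilde F(h)\,c\,\Gamma'$. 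You recover the stated shape by setting $q=c$ and $F(h)=c^{-1}\widetilde F(h)c$, and the inclusion $F(\Gamma)\subseteq\Gamma'$ then drops out of the standard covering-space fact that $h\mapsto\hat f(h\gamma)$ and $\hat f$ are two lifts of the same map to $Q$, hence differ by a deck transformation of $\widetilde{H'}\to Q$; so the first ``delicate step'' you defer is genuinely routine.

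The second deferred step, however, is not bookkeeping but a real restriction on the statement as written. If $H$ is not simply connected, the construction only produces a homomorphism of the universal covers, and there is in general no reason for $\ker(\widetilde H\to H)$ to be carried into $\ker(\widetilde{H'}\to H')$: already for an elliptic curve with complex multiplication presented as $\CC^*/q^{\ZZ}$, the CM endomorphism is induced by no group homomorphism $\CC^*\to\CC^*$ (those are only $w\mapsto w^n$). So the lemma should be read with $H$, $H'$ simply connected --- which is how the paper in fact uses it, cf.\ Remark \ref{translations2}, where simple connectedness is invoked explicitly --- or with $F$ allowed to be a homomorphism of the universal covers. With that hypothesis made explicit, your argument closes up into a complete proof.
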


\begin{proof}

Cf. \cite{catanesehyperII}
and \cite{winkelmann}.
\end{proof}

Next we can show that a group freely acting on a parallelizable manifold
 so that the quotient has strongly semiample cotangent bundle must act
 trivially on holomorphic differential $1$-forms:
 
\begin{lemma}
\label{translations}
Let $P = H / \Gamma$ be a compact complex parallelizable manifold,
and let $G$ be a finite group acting on $P$ so that 
$X = P/G$ has strongly semiample cotangent bundle.
Then $G$ acts trivially on $H^0(P, \Omega^1_P)$,
\emph{i.e.} for each $g \in G$ the map 
$g^* \colon H^0(P, \Omega^1_P) \to H^0(P, \Omega^1_P)$ is the identity map.

\end{lemma}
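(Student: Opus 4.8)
The plan is to exploit the structure of automorphisms of parallelizable manifolds (Lemma \ref{autom}) together with the strong semiampleness hypothesis to force each $g \in G$ to act as the identity on holomorphic $1$-forms. First I would use Lemma \ref{autom} applied to the automorphism $g \colon P \to P$ of the single parallelizable manifold $P = H/\Gamma$: each $g$ is induced, up to a translation by some $q_g \in H$, by a Lie group automorphism $F_g \colon H \to H$ preserving $\Gamma$. The key observation is that a translation acts trivially on the trivial cotangent bundle $\Omega^1_P$: translations by elements of $H$ are the flows of right-invariant (or left-invariant, depending on convention) vector fields, and the holomorphic $1$-forms $H^0(P,\Omega^1_P)$ are exactly the invariant forms descended from $\mathfrak{h}^* = \mathrm{Lie}(H)^*$. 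Hence the action of $g^*$ on $H^0(P,\Omega^1_P) \cong \mathfrak{h}^*$ depends only on $F_g$, and is given by the dual (inverse transpose) of the differential $dF_g \colon \mathfrak{h} \to \mathfrak{h}$ at the identity.

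Having reduced the $G$-action on $H^0(P,\Omega^1_P)$ to the linear representation $g \mapsto (dF_g)^{*-1}$ on $\mathfrak{h}^*$, I would then bring in the strong semiampleness of $\Omega^1_X$, where $X = P/G$. Here is where the tension lies: strong semiampleness means $\Sym^m \Omega^1_X$ is globally generated for some $m>0$, and on the parallelizable cover $P$ we have $\Omega^1_P \cong \oo_P^{\oplus n}$ trivial, so $H^0(P,\Sym^m\Omega^1_P) \cong \Sym^m\mathfrak{h}^*$ with dimension $\binom{n+m-1}{m}$. Sections of $\Sym^m\Omega^1_X$ on $X$ correspond precisely to the $G$-invariant elements of $\Sym^m\mathfrak{h}^*$ under the representation above. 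Since $G$ is finite, its representation on $\mathfrak{h}^*$ is diagonalizable; I would pass to a basis of eigenvectors and track how the eigenvalues (roots of unity) behave under global generation.

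The crux of the argument, and the main obstacle, is extracting the conclusion that every eigenvalue must be $1$. The idea is that global generation of $\Sym^m\Omega^1_X$ means the $G$-invariant degree-$m$ monomials in the eigencoordinates must already generate $\Sym^m\mathfrak{h}^*$ as an $\oo_X$-module, i.e. at every point they span the fiber. But a monomial $\prod \xi_i^{a_i}$ (where $\xi_i$ are eigenforms with eigenvalues $\zeta_i$) is $G$-invariant only if $\prod \zeta_i^{a_i} = 1$ for all $g \in G$. If some eigenvalue $\zeta_i \neq 1$, one shows that the pure power $\xi_i^m$ can never appear with nonzero coefficient in any invariant section (its weight is $\zeta_i^m \neq 1$ unless the order of $\zeta_i$ divides $m$, but even then one can only produce $\xi_i^m$ in combinations, and the invariant sections cannot generate the coordinate direction $\xi_i^m$ freely), contradicting global generation at a point where that coordinate direction must be hit. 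I would make this precise by choosing $m$ divisible by the exponent of $G$ so that all $\zeta_i^m = 1$, then arguing that invariance forces a genuine constraint: the only way the invariant subspace of $\Sym^m\mathfrak{h}^*$ can surject onto the full fiber $\Sym^m\mathfrak{h}^*$ at every point is if the representation on $\mathfrak{h}^*$ is trivial to begin with.

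I expect the delicate point to be handling the interplay between the translation part $q_g$ and the linear part $F_g$ rigorously — in particular justifying that $g^*$ on $1$-forms truly factors through $dF_g$ alone and that the descended sheaf $\Sym^m\Omega^1_X$ on the \emph{quotient} has global sections exactly matching $G$-invariants — and then converting ``global generation'' into the pointwise spanning statement that rules out nontrivial eigenvalues. Once the representation $\mathfrak{h}^* \cong H^0(P,\Omega^1_P)$ is shown to be trivial, the lemma follows: $g^*$ is the identity on $H^0(P,\Omega^1_P)$ for every $g \in G$.
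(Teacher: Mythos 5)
Your reduction is correct up to a point and matches the paper: pulling back along the \'etale quotient identifies $H^0(X,\Sym^m\Omega^1_X)$ with $(\Sym^m\mathfrak{h}^*)^G$, and global generation of $\Sym^m\Omega^1_X$ (evaluated at any point, or equivalently by the dimension count $\dim H^0 \geqslant \rk$) forces \emph{all} of $\Sym^m\mathfrak{h}^*$ to be $G$-invariant. The gap is in your endgame. From triviality of the action on $\Sym^m\mathfrak{h}^*$ you cannot conclude that the representation on $\mathfrak{h}^*$ is trivial: if $g$ acts as $\zeta\,\mathrm{Id}$ with $\zeta^m=1$, every degree-$m$ monomial is invariant, the invariant subspace is the whole fiber, and your proposed contradiction (``the invariant sections cannot generate the coordinate direction $\xi_i^m$'') evaporates, since $\xi_i^m$ is itself invariant. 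What the invariance of all of $\Sym^m\mathfrak{h}^*$ actually gives (comparing the weights of $\xi_i^m$ and $\xi_i^{m-1}\xi_j$) is only that all eigenvalues of $g$ coincide, i.e.\ that $G$ acts by homotheties $\chi_g\,\mathrm{Id}$ with $\chi_g$ a root of unity. Linear algebra alone cannot improve this to $\chi_g=1$.

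The paper closes this gap with two geometric inputs you do not supply. First, $g_*$ is an automorphism of the Lie algebra $\mathfrak{h}\cong H^0(P,\mathcal{T}_P)$, so $\chi_g[x,y]=[\chi_g x,\chi_g y]=\chi_g^2[x,y]$, forcing $\chi_g=1$ unless $\mathfrak{h}$ is abelian. Second, in the abelian case $P$ is a complex torus and $g(x)=\chi_g x+v$; if $\chi_g\neq 1$ then $x=(1-\chi_g)^{-1}v$ is a fixed point, contradicting freeness of the action. (Your opening reduction via Lemma \ref{autom} to the linear part $dF_g$ is fine but is not what carries the argument; the bracket-compatibility and fixed-point steps are the essential content, and without them the proof does not go through.)
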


\begin{proof}

Let $g \in G$ act on $P = H/\Gamma$, let us show that 
if $\Omega^1_X$ is strongly semiample then $g^*$ is the identity homomorphism on 
$H^0(P, \Omega^1_P)$.

As the quotient $\rho \colon P \to X \cong P/G$ is étale, then 
$\rho^* \Omega^1_X = \Omega^1_P$, 
and we have a natural isomorphism for each $m>0$:
\[
H^0(X, \Sym^m \Omega^1_X) \cong H^0(P, \Sym^m \Omega^1_P)^G ~.
\]
However $\Omega^1_P$ (so $\Sym^m \Omega^1_P$) is a trivial vector bundle, and 
the action of $G$  on 
\[
H^0(P, \Sym^m \Omega^1_P) = \Sym^m H^0(P,\Omega^1_P)
\]
 is the symmetric power 
of the action of $G$ in $H^0(P, \Omega^1_P)$.

Now, let $K>0$ be an integer such that 
$\Sym^K \Omega^1_X$ is globally generated, then 
\[
\dim H^0(X, \Sym^K \Omega^1_X) = \dim H^0(P, \Sym^K \Omega^1_P)^G
\geqslant \rk \Sym^K \Omega^1_X = 
\]
\[
= \rk \Sym^K \Omega^1_P = \dim H^0(P, \Sym^K \Omega^1_P) ~,
\]
so all elements in $\Sym^K H^0(P,  \Omega^1_P)$ are invariant by
the action of $G$.

As $G$ is a finite group, any $g \in G$ acts on 
$H^0(P, \Omega^1_P)$ in a diagonalizable way, then it is easily seen 
that triviality of the action on $\Sym^K H^0(P,  \Omega^1_P)$
implies that $G$ acts by homotheties on $H^0(P, \Omega^1_P)$
(cf. \cite{mistrurbi}, proof of Theorem 4.14).
So there exists a character 
$\chi \colon G \to \CC^*$ such that each $g\in G$ acts on $H^0(P, \Omega^1_P)$
as $\chi_g Id$.  Let us show that this character is trivial.

Let us call $\mathfrak{h} = T_{e}H$ the Lie algebra of the complex Lie group $H$, then 
we have $\mathfrak{h}  \cong  H^0(P, \mathcal{T}_P)$, where $\mathcal{T}_P$ is the (trivial) tangent bundle on $P$, so
\[
\mathfrak{h}^*  =  H^0(P, \mathcal{T}_P)^* \cong   H^0(P, \Omega^1_P)
\]
and 
\[
g_* = {}^t(g^*) = \chi_g Id_{\mathfrak{h}} \colon \mathfrak{h} \to \mathfrak{h}
\] is a 
Lie algebra homomorphism.

So we must have, for each 
$x, y \in \mathfrak{h}$:
\[
\chi_g [x,y] = g^*([x,y]) = [g^* x, g^*y] = [\chi_g x , \chi_g y] = \chi_g^2 [x,y] ~.
 \] 
This can happen only if either $\chi_g = 1$, or if
$[x,y] = 0$ for all $x,y \in \mathfrak{h}$.
The first case means that 
\(g^* = \chi_g Id \colon H^0(P, \Omega^1_P) \to H^0(P, \Omega^1_P) \) is the identity map, 
while the second case means that the Lie algebra $\mathfrak{h}$ is abelian.
But as $P = H / \Gamma$ if $\mathfrak{h}$ is abelian  then $H$ is an abelian complex Lie group, and therefore $P$ is a complex torus ($P$ is compact).
If $P$ is a complex torus we can  show  that $\chi_g =1$ as well. In fact, 
 writing $g \colon P \to P$ as
\[g (x) = (\chi_g)  x + v ~,\]  
if $\chi_g \neq 1$ then the point $x = (1 - \chi_g)^{-1} v$ is a fixed point
of $g$, which we cannot have as the action is free.

\end{proof}

\begin{remark}[Catanese, personal communication]

A more general argument of Fabrizio Catanese can be applied to show that  $G$ acts trivially on $H^0(P, \Omega^1_P)$ in Lemma \ref{translations} above:
a holomorphic automorphism $g \colon P \to P$ of a compact complex parallelizable manifold $P = H/\Gamma$
such that all eigenvalues of $g^* \colon H^0(P, \Omega^1_P) \to  H^0(P, \Omega^1_P)$ 
are different from $1$  must have a fixed point.
In fact for each $q \in H$ the map $g$ is homotopic to the translated map $qg$. 
Then the intersection number $\nu$ in $P\times P$ of the diagonal and the graph of $g$, 
is the same for the map $g$ and for the map $qg$, and is equal to $0$ if $g$ has no fixed point. As one can find $q\in H$ such that 
$qg$ has a fixed point, and as this is an isolated fixed point because all eigenvalues are different from $1$, then $\nu >0$, so $g$ must have a fixed point as well. 
In the proof of Lemma \ref{translations} above, 
as $g$ has no fixed point we deduce that the map 
$g^* = \chi_g Id \colon H^0(P, \Omega^1_P) \to H^0(P, \Omega^1_P)$
 has all eigenvalues equal to $1$, so $\chi_g =1$.

\end{remark}

We can use these constructions to characterize parallelizable manifolds through strong semiampleness:

\begin{theorem}
\label{mainparall}

Let $X$ be a compact complex manifold. Then $X$ is parallelizable if and only if 
it has Kodaira dimension $k(X) = 0$ and strongly semiample cotangent bundle.

\end{theorem}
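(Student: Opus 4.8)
The plan is to prove the two implications separately: the forward direction is immediate, and the converse will be assembled from the results established above, principally Corollary \ref{strong} and Lemma \ref{translations}.

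For the easy direction I would suppose $X$ is parallelizable, so that $\Omega^1_X \cong \oo_X^{\oplus n}$. Then $\Sym^m \Omega^1_X$ is a trivial, hence globally generated, vector bundle for every $m > 0$, so $\Omega^1_X$ is strongly semiample; moreover $\det \Omega^1_X = \omega_X$ is trivial, so $k(X) = 0$.

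For the converse I would assume $k(X) = 0$ and $\Omega^1_X$ strongly semiample. Since $k(X) = 0$ is exactly the statement that $kod(X, \det \Omega^1_X) = kod(X, \omega_X) = 0$, I can apply Corollary \ref{strong} to $E = \Omega^1_X$ to produce a finite cyclic étale covering $\rho \colon \widetilde{X} \to X$ with $\rho^* \Omega^1_X$ trivial. As $\rho$ is étale, $\rho^* \Omega^1_X \cong \Omega^1_{\widetilde{X}}$, so $\widetilde{X}$ is a compact complex parallelizable manifold, which I write as $\widetilde{X} = H/\Gamma$ following \cite{wang}. Being cyclic Galois étale, $\rho$ identifies $X \cong \widetilde{X}/G$ with $G$ the finite cyclic deck transformation group, acting freely. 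I would then invoke Lemma \ref{translations} with $P = \widetilde{X}$: since $X = P/G$ has strongly semiample cotangent bundle, $G$ acts trivially on $H^0(P, \Omega^1_P)$, whence
\[
H^0(X, \Omega^1_X) \cong H^0(P, \Omega^1_P)^G = H^0(P, \Omega^1_P),
\]
a space of dimension $n = \dim X$. Because $\Omega^1_P$ is trivial, its evaluation map $H^0(P, \Omega^1_P) \otimes \oo_P \to \Omega^1_P$ is an isomorphism; pulling back the evaluation map of $X$ along $\rho$ and using the identification above recovers precisely this isomorphism. Since $\rho$ is étale and surjective, an evaluation map that is an isomorphism after pullback must already be an isomorphism, so $H^0(X, \Omega^1_X) \otimes \oo_X \to \Omega^1_X$ is an isomorphism and $X$ is parallelizable.

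The genuine content here is supplied by Corollary \ref{strong} and Lemma \ref{translations}, both proven earlier; the step I expect to require the most care is the final descent, namely checking that the triviality of the $G$-action on \emph{all} global $1$-forms, combined with the étale property of $\rho$, is exactly what upgrades the trivialization of $\Omega^1_{\widetilde{X}}$ to a trivialization of $\Omega^1_X$ itself, rather than leaving $X$ merely as a nontrivial cyclic quotient with $\Omega^1_X \cong L^{\oplus n}$ for a torsion line bundle $L$.
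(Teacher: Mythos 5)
Your proof is correct and follows essentially the same route as the paper: reduce to $X = P/G$ with $P$ compact complex parallelizable, apply Lemma \ref{translations} to conclude that $G$ acts trivially on $H^0(P,\Omega^1_P)$, and descend the trivializing evaluation map along the étale quotient. The only cosmetic difference is that you produce the parallelizable cover via Corollary \ref{strong} (yielding a cyclic Galois group), whereas the paper obtains $X = P/G$ from its weakly semiample characterization (Theorem \ref{hyperell2}); both are available at this point and the remainder of the argument is identical.
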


\begin{proof}

Clearly a  compact complex parallelizable manifold has Kodaira dimension zero and strongly semiample cotangent bundle.

Conversely, let us suppose that $X$ has Kodaira dimension $0$ and strongly semiample 
cotangent bundle.
Then we know that $X = P /G$ is the quotient of a compact complex parallelizable manifold $P$ by the free action of a finite group $G$.

So  according to Lemma \ref{translations} each element of $G$ acts trivially 
on $H^0(P, \Omega^1_P)$, \emph{i.e.} we have 
\[
H^0(P, \Omega^1_P)^G = H^0(P, \Omega^1_P) ~.
\]

As $X$ has Kodaira dimension $0$, in order to show that $X$ is parallelizable we 
have to show that $\Omega^1_X$ is globally genereated.

Now fix a point $x \in P$ and its image $\pi(x)= \bar{x} \in X$.
As $\pi$ is an étale map, we can consider the following commutative diagram:

\[
\squaremap{H^0(X, \Omega^1_X)}{ev_{\bar{x}}}{\Omega^1_{X, \bar{x}}}{\pi^*}{{}^t (d_x \pi)}
{H^0(P, \Omega^1_P)}{ev_x}{\Omega^1_{P, x}} 
\]

where the vertical map on the left is an isomorphism 
because $H^0(P, \Omega^1_P)^G = H^0(P, \Omega^1_P)$ and 
the vertical map on the left is an isomorphism as 
$\pi \colon P \to X = P/G$ is étale, while 
the map $ev_x \colon H^0(P, \Omega^1_P) \to \Omega^1_{P, x}$ is an isomorphism 
as $P$ is compact parallelizable.
So the map $ev_{\bar{x}} \colon H^0(X, \Omega^1_X) \to \Omega^1_{X, {\bar{x}}}$
is an isomorphism, hence $X$ is parallelizable.

\end{proof}

\begin{remark}

As it is shown in \cite{mistrettaparma}, 
a vector bundle $E$ on a compact complex manifold $X$ is strongly semiample 
and has determinant $\det E$ with Kodaira-Iitaka dimension $0$ if and only if
it is a direct sum of isomorphic torsion line bundles
$E = L \oplus L \oplus \dots \oplus L$. Now when $E$ is the cotangent bundle
it cannot happen that $\Omega^1_X$ is the direct sum of $n$ copies of a non trivial torsion line bundle: in that case $\Omega^1_X$ is strongly semiample 
and has determinant $\omega_X$ with Kodaira-Iitaka dimension $0$, so it is trivial
because of Theorem \ref{mainparall}.

\end{remark}

\begin{remark}
\label{translations2}

Let $P = H / \Gamma$ be a compact complex parallelizable manifold,
then we know that for each biholomorphic map $g\colon P \to P$
there exist an element $q \in H$ and an automorphism $F\colon H \to H$ of complex Lie groups, such that
\[
g(h \Gamma) = (q F(h)) \Gamma ~.
\]
Also, for each automorphism $F\colon H \to H$ of complex Lie groups such that 
$F(\Gamma) \subseteq \Gamma$ and for each $q \in H$  the map defined as above
\[
g(h \Gamma) = (q F(h)) \Gamma 
\]
is a biholomorphic map $P \to P$.

Now let $q\in H$ be an element such that 
$q^{-1} \Gamma q = \Gamma$, and let $F \colon H \to H$ be the map
$F(h) = q^{-1} h q$. Then the translation by $q$ on the right is well defined as
a biholomorphic map $g \colon P \to P$, and can be described as above by
\[
g \colon h\Gamma \mapsto (qF(h))\Gamma = (hq)\Gamma ~.
\]
In this case the map $F \colon H \to H$ is not necessarily the identity map on $H$,
however the map of Lie algebras (the spaces of holomorphic vector field on $P$, which is naturally isomorphic to  the Lie Algebra $\mathfrak{h}$ of the Lie group $H$)
\[
g_*: H^0(P, \mathcal{T}_P) = \mathfrak{h}  \to H^0(P, \mathcal{T}_P) = \mathfrak{h}
\]
is the identity map, therefore the map
\[
g^*  \colon H^0(P, \Omega^1_P) \to  H^0(P, \Omega^1_P)
\]
is the identity map as well.
In fact if we call $\el_q \colon P \to P$ multiplication by $q$ on the left,
we have that $g = \el_q \circ F \colon P \to P$.
Then we have $(\el_q)_* = Ad_q$ and $F_* = Ad_{q^{-1}}$, so $g_*$ and $g^*$ are the identity maps.

Conversely, suppose we have an isomorphism $g \colon P \to P$ such that 
\[
g^* = id \colon H^0(P, \Omega^1_P) \to  H^0(P, \Omega^1_P) ~.
\]
Then if we write $g(h \Gamma) = qF(h) \Gamma$
with $F \colon H \to H$ an isomorphism of complex Lie groups and $q \in H$
 we see that 
$\el_q^* = (F^*)^{-1}$, so 
\[
F^* = (\el_q^*)^{-1} = {^t}(Ad_{q^{-1}})
\]
therefore if we suppose that $H$ is a simply connected complex Lie group 
we have that the Lie group homomorphism $F$ is the conjugation map 
$F(h)= q^{-1} h q$, and $q \in H$ is an element such that 
$q^{-1}  \Gamma q = \Gamma$ because the map $F$ induces a map  $g \colon P \to P$.

In Theorem \ref{mainparall} above, we have $P = H/ \Gamma$ and $X = P/G$ with $G$ a finite cyclic group of order $m$,
generated by $g$. Then, as we know that $g^* = id$ on $H^0(P, \Omega^1_P)$,
the biholomorphic map $g$ must be $\el_q \circ Ad_{q^-1}$. So we have that 
 \[
 g (h\Gamma) = q (q^{-1}hq)\Gamma = (hq) \Gamma ~,
 \]
 and so 
 \[X = P / G = (H/\Gamma)/G = H  / <q, \Gamma> ~.
 \]
 
 Furthermore we have that $q \in H$ is an element such that 
 $ q^{-1} \Gamma q = \Gamma$, such that  
 \[
 q^m \in \bigcap_{h\in H} h\Gamma h^{-1}
 \]
 (where $m$ is the order of the cyclic group $G = <g>$) and such that 
 \[
 q^k \notin  h\Gamma h^{-1} \textrm{ for any } h \in H 
 \textrm{ and any } k=1, \dots , m-1 ~.
 \]

%
%
%
%
%
%
%
%
%
%

\end{remark}

%
%
%
%
%
%

As any compact complex parallelizable manifold which is K\"ahler is a complex torus,
 as any compact complex parallelizable manifold which is projective is an abelian variety, and as any smooth proper algebraic variety over the complex numbers which is parallelizable is an abelian variety   we have the following corollaries:

\begin{corollary}

A compact K\"ahler manifold $X$ is biholomorphic to a complex torus if and only if it has Kodaira dimension $0$ and strongly semiample cotangent bundle.

\end{corollary}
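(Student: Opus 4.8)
The plan is to deduce this directly from Theorem~\ref{mainparall} together with Wang's characterization of Kähler parallelizable manifolds as complex tori (cf.~\cite{wang}), exactly as the preceding corollaries are obtained. The whole content is already packaged in those two results, so the argument should be a short two-step unwinding of the definitions.

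For the forward implication I would begin from a complex torus $X = \CC^n/\Lambda$. Its holomorphic cotangent bundle is trivial, $\Omega^1_X \cong \oo_X^{\oplus n}$, so $\Sym^1\Omega^1_X = \Omega^1_X$ is already globally generated; taking $m=1$ in the definition this shows that $\Omega^1_X$ is \emph{strongly} semiample (not merely weakly). Moreover $\omega_X = \det \Omega^1_X$ is trivial, whence $k(X)=0$, and a complex torus is Kähler. This settles the easy direction, and the only point worth recording is that triviality of $\Omega^1_X$ forces strong semiampleness, which is immediate since a trivial bundle is globally generated.

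For the converse, suppose $X$ is compact Kähler with $k(X)=0$ and strongly semiample cotangent bundle. Then Theorem~\ref{mainparall} applies verbatim and shows that $X$ is a compact complex parallelizable manifold. Since $X$ is assumed Kähler, Wang's theorem (recalled in the remark following the definition of parallelizable manifold) identifies $X$ with a complex torus, completing the proof. I do not expect any genuine obstacle here: the substantive work lives in Theorem~\ref{mainparall}, and the Kähler hypothesis is precisely what upgrades the conclusion ``parallelizable'' to ``complex torus'' via Wang's dichotomy.
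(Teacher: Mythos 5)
Your proposal is correct and follows exactly the paper's route: the corollary is deduced from Theorem~\ref{mainparall} together with Wang's result that a compact complex parallelizable manifold is K\"ahler if and only if it is a complex torus, with the forward direction being immediate from triviality of the cotangent bundle. No further comment is needed.
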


\begin{corollary}

A smooth projective variety $X$ over the complex numbers is isomorphic to an abelian variety if and only if it has Kodaira dimension $0$ and strongly semiample cotangent bundle.

\end{corollary}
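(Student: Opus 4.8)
The plan is to deduce this corollary directly from Theorem \ref{mainparall}, combining it with the classical identification of projective parallelizable manifolds with abelian varieties; no new geometry is needed beyond what has already been established.

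For the forward implication I would start from an abelian variety $A$ of dimension $n$. Its cotangent bundle is trivial, $\Omega^1_A \cong \oo_A^{\oplus n}$, so in particular $\Sym^1 \Omega^1_A = \Omega^1_A$ is globally generated and hence $\Omega^1_A$ is strongly semiample (with $m=1$). Moreover the canonical bundle $\omega_A = \det \Omega^1_A \cong \oo_A$ is trivial, so $A$ has Kodaira dimension $0$. This produces both required properties simultaneously.

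For the converse, suppose $X$ is a smooth projective variety over $\CC$ with $k(X) = 0$ and $\Omega^1_X$ strongly semiample. Theorem \ref{mainparall} immediately yields that $X$ is a compact complex parallelizable manifold. Since $X$ is projective it is in particular K\"ahler, and by Wang's theorem (cf.\ \cite{wang}) a compact complex parallelizable manifold which is K\"ahler is a complex torus; a complex torus which is projective is an abelian variety, so $X$ is an abelian variety. Alternatively, one may invoke \cite{winkelmann} directly, which asserts that a proper algebraic variety over $\CC$ is parallelizable precisely when it is an abelian variety, collapsing the last two steps into one citation.

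I do not expect any serious obstacle here: the entire analytic content has been absorbed into Theorem \ref{mainparall}, and the remaining input is the cited structure theory of parallelizable manifolds. The only point requiring attention is bookkeeping, namely that \emph{strongly} semiample is exactly the hypothesis fed into Theorem \ref{mainparall}, and that the projectivity (hence K\"ahler) assumption is precisely what upgrades \emph{parallelizable} to \emph{complex torus}, which together with projectivity delivers \emph{abelian variety}.
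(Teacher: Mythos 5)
Your proposal is correct and matches the paper's own (implicit) argument exactly: the paper derives this corollary from Theorem \ref{mainparall} together with the preceding remark that a projective (hence K\"ahler) compact complex parallelizable manifold is an abelian variety, which is precisely the route you take. No discrepancies to report.
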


\begin{corollary}

A smooth proper algebraic variety $X$  over the complex numbers  is isomorphic to an abelian variety if and only if it has Kodaira dimension $0$ and strongly semiample cotangent bundle.

\end{corollary}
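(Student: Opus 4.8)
The plan is to deduce this corollary directly from Theorem \ref{mainparall} together with the algebraic characterization of parallelizable varieties, treating the two implications separately.

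For the forward direction I would argue as follows. Suppose $X$ is an abelian variety. Then $X$ is in particular a complex torus, so its cotangent bundle is trivial, $\Omega^1_X \cong \oo_X^{\oplus n}$. A trivial bundle is globally generated, hence $\mathrm{Sym}^1 \Omega^1_X = \Omega^1_X$ is generated by its global sections, which shows that $\Omega^1_X$ is strongly semiample. Moreover the canonical bundle $\omega_X = \det \Omega^1_X$ is trivial, so $X$ has Kodaira dimension $0$. This direction is purely formal.

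For the reverse direction, suppose $X$ is a smooth proper algebraic variety over $\CC$ with Kodaira dimension $0$ and strongly semiample cotangent bundle. Viewing $X$ as a compact complex manifold, Theorem \ref{mainparall} applies and yields that $X$ is a compact complex parallelizable manifold. It then remains to upgrade ``parallelizable'' to ``abelian variety'' within the algebraic category: this is precisely the cited result of Winkelmann (\cite{winkelmann}), which states that a proper algebraic variety over $\CC$ that is a compact complex parallelizable manifold must be an abelian variety. Applying this to $X$ concludes that $X$ is an abelian variety.

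The point worth emphasizing, and the only place where the hypotheses are genuinely used beyond Theorem \ref{mainparall}, is that the reverse implication relies essentially on the algebraicity assumption. A general compact complex parallelizable manifold of Kodaira dimension $0$ need not be Kähler (for instance the non-Kähler Lie-group quotients arising from \cite{wang}), so parallelizability alone does not force $X$ to be a torus; it is the properness and algebraicity of $X$, via \cite{winkelmann}, that excludes the non-Kähler and non-abelian possibilities. Consequently there is no real obstacle to overcome here: the substantive structural input is Theorem \ref{mainparall}, itself built on Theorem \ref{semiample} and Lemma \ref{translations}, and once that is in hand the corollary follows immediately.
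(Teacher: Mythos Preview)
Your proof is correct and follows exactly the same approach as the paper: both directions are deduced from Theorem \ref{mainparall} together with the fact (cited from \cite{winkelmann}) that a smooth proper algebraic variety over $\CC$ which is parallelizable is an abelian variety. Your additional remark on why algebraicity is essential is accurate but goes beyond what the paper states for this corollary.
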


\section{Final remarks and questions}
\label{conclusion}

Since the times of K\"ahler (cf. \cite{kahler}) and Severi 
(cf. \cite{severi} and \cite{severi2}), 
 a lot of interesting geometrical and topological 
properties were discovered relating differential forms and geometry.
We address first some questions concerning the fundamental group of varieties with 
semiample cotangent bundle.

\subsection{Manifolds with infinite fundamental group}

We mention in particular some results and conjectures 
on the relationship between the 
existence of holomorphic symmetric differentials and the topology 
of a complex manifold:

\begin{conjecture}[Mumford]
Let $X$ be a compact K\"ahler manifold. Then $X$ is rationally connected 
if and only if $H^0(X, \Sym^m \Omega_X^p) =0$ for all $m>0$ and all $p>0$.

\end{conjecture}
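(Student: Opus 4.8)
The statement is Mumford's conjecture, which is open, so the realistic goal of a proof plan is to settle unconditionally the one implication that is within reach and to pin down exactly where the genuine difficulty lies. The tractable direction is the forward one: if $X$ is rationally connected then every $H^0(X, \Sym^m \Omega_X^p)$ vanishes. A rationally connected compact K\"ahler manifold is automatically projective, and through a general point it carries a \emph{very free} rational curve $f \colon \PP^1 \to X$, meaning $f^* \mathcal{T}_X \cong \bigoplus_i \oo_{\PP^1}(b_i)$ with every $b_i \geqslant 1$. Dualizing gives $f^* \Omega^1_X \cong \bigoplus_i \oo_{\PP^1}(-b_i)$, so for each $p \geqslant 1$ the bundle $f^* \Omega_X^p = \bigwedge^p f^* \Omega^1_X$ is a direct sum of line bundles of strictly negative degree, and the same holds for $f^* \Sym^m \Omega_X^p$ for every $m \geqslant 1$. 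Hence $H^0(\PP^1, f^* \Sym^m \Omega_X^p) = 0$, so any global symmetric differential restricts to zero on each very free curve. Since such curves sweep out a Zariski-dense subset and the differential is holomorphic, it must vanish identically. This disposes of the ``only if'' direction.

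The reverse implication is the substance of the conjecture, and the natural line of attack runs through the maximal rationally connected fibration. If $X$ is \emph{not} rationally connected, its MRC quotient has a base $Z$ of positive dimension which, by Graber--Harris--Starr, is not uniruled. Because the MRC fibres are rationally connected and hence carry no symmetric differentials, pulling back along the MRC map identifies the symmetric differentials of $X$ with those of $Z$, so it suffices to produce a nonzero element of some $H^0(Z, \Sym^m \Omega_Z^p)$. First I would verify that this reduction is clean -- that after resolving the indeterminacy of the MRC map and passing to a suitable smooth modification (using the birational invariance of spaces of holomorphic symmetric forms) the identification is genuine -- and then concentrate all effort on the non-uniruled base.

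The main obstacle is precisely this final step, and it is exactly why the conjecture remains open: one must show that a non-uniruled compact K\"ahler manifold carries a nonzero holomorphic symmetric differential. Non-uniruledness is equivalent to pseudoeffectivity of the canonical bundle $K_Z$ (Boucksom--Demailly--P\u{a}un--Peternell), but pseudoeffectivity yields only a positive current, not an honest holomorphic section of any $\Sym^m \Omega_Z^p$, and bridging that gap in general is what is missing. I would therefore attack the reduced problem along the two routes where partial results exist: the topological one, exploiting the fact that an infinite, suitably linear fundamental group forces symmetric differentials to appear (in the spirit of the representation- and period-domain techniques of Brunebarbe--Klingler--Totaro, and consistent with the infinite-$\pi_1$ phenomena established earlier in this paper); and the low-dimensional one, where surface and threefold classification already yield the conjecture and may furnish inductive leverage through the MRC fibration. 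The crux that no present method overcomes is manufacturing a \emph{global} symmetric form on an arbitrary non-uniruled K\"ahler manifold out of the mere positivity of its canonical class.
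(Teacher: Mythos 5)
The statement you were asked to prove is Mumford's conjecture, which the paper records purely as an open conjecture in its concluding section, as motivation for the discussion of fundamental groups; the paper offers no proof, so there is no argument of the authors to compare yours against. On its own terms your proposal is honest and essentially correct. The forward implication is handled by the standard and valid argument: a rationally connected compact K\"ahler manifold is projective, a very free curve $f\colon \PP^1\to X$ through a general point has $f^*\Omega^1_X$ isomorphic to a direct sum of line bundles of strictly negative degree, hence $f^*\Sym^m\Omega^p_X$ has no nonzero sections for $m,p>0$; a global symmetric differential therefore vanishes at every point of every very free curve, and since these sweep out a dense subset it vanishes identically. Your reduction of the converse to the non-uniruled base of the MRC fibration via Graber--Harris--Starr is also the accepted framing, and you correctly identify the genuine obstruction: pseudoeffectivity of $K_Z$ for the non-uniruled base $Z$ does not, by any known method, produce a holomorphic section of some $\Sym^m\Omega^p_Z$. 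The one caveat is that the statement is an equivalence, so what you have written is a complete proof of one direction together with an accurate diagnosis of why the other direction is open; it is not, and does not claim to be, a proof of the conjecture, which is the correct posture for a statement the paper itself presents as unproved.
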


Related to this conjecture, a characterization for rationally connected varieties  has been proven recently, under some stronger conditions implying
in particular that for all $p$ the vector bundles $\Omega^p_X$ are not pseudoeffective (cf. \cite{campanademaillypeternell}):

\begin{theorem}[Campana-Demailly-Peternell]
Let $X$ be a complex projective manifold. Then $X$ is rationally connected 
if and only if for any ample line bundle $A$ on $X$,
for all  $p>0$ and
for all $k>0$ there exists a constant $C_A>0$ such that
\[
H^0(X, \Sym^m \Omega_X^p \otimes A^{\otimes k}) =0
\] for all $m > C_A k$.

\end{theorem}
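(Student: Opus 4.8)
The plan is to recast the asymptotic vanishing condition as the assertion that \emph{none} of the bundles $\Omega_X^p$, $p\geq 1$, is pseudoeffective, and then prove the equivalence with rational connectedness in this reformulated shape, following the approach of \cite{campanademaillypeternell}. Recall that a torsion-free sheaf $E$ is pseudoeffective exactly when, for every ample $A$ and every $\varepsilon>0$, one has $H^0(X,\Sym^m E\otimes A^{\otimes \lceil \varepsilon m\rceil})\neq 0$ for infinitely many $m$. Choosing $\varepsilon<1/C_A$ turns this into the existence of nonzero sections with $m>C_A k$, so the displayed vanishing for all $m>C_A k$ is precisely the negation of pseudoeffectivity. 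Thus the theorem becomes: $X$ is rationally connected if and only if $\Omega_X^p$ is not pseudoeffective for every $p>0$.

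For the direction from rational connectedness to vanishing, I would restrict to a covering family of rational curves. If $X$ is rationally connected then through a general point passes a very free rational curve $f\colon \PP^1\to X$, and for such a curve $f^*T_X=\bigoplus_i \oo_{\PP^1}(a_i)$ with all $a_i\geq 1$. Dualising and forming exterior and symmetric powers, $f^*(\Sym^m\Omega_X^p)$ is a direct sum of line bundles of degree bounded above by $-cm$, for a positive constant $c$ uniform over a bounded covering family. Hence $f^*(\Sym^m\Omega_X^p\otimes A^{\otimes k})$ has all summands of negative degree as soon as $m>C_A k$, where $C_A$ is the supremum of $(\deg_C A)/c$ over the family. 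A global section therefore restricts to zero on every curve of the family; since these curves pass through a Zariski-dense set of points, the section vanishes identically, giving the vanishing for all $p$.

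For the converse I would argue by contraposition using the maximal rationally connected (MRC) fibration $\pi\colon X\dashrightarrow Y$. If $X$ is not rationally connected then $q:=\dim Y\geq 1$, and by the theorem of Graber--Harris--Starr the base $Y$ is not uniruled. By the characterisation of uniruledness of Boucksom--Demailly--Paun--Peternell, non-uniruledness of $Y$ yields that $\omega_Y=\Omega_Y^q$ is a pseudoeffective line bundle. Passing to a smooth model $\mu\colon \widetilde{X}\to X$ on which $\pi$ becomes a morphism $\widetilde{\pi}\colon \widetilde{X}\to Y$, the inclusion $\widetilde{\pi}^*\Omega_Y^q\hookrightarrow \Omega_{\widetilde{X}}^q$ together with the stability of pseudoeffectivity of line bundles under pullback shows that $\Omega_{\widetilde{X}}^q$ is pseudoeffective. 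Since $\mu$ is birational, spaces of twisted symmetric holomorphic $q$-forms are preserved up to the modification, so $\Omega_X^q$ is pseudoeffective; that is, the vanishing fails for $p=q$, contradicting the hypothesis and forcing $X$ to be rationally connected.

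The main obstacle is this converse direction, and within it the transfer of pseudoeffectivity from $\omega_Y$ on the base to $\Omega_X^q$ on $X$. Because the MRC fibration is only a rational map, one must pass to a resolution and then descend sections through the birational modification $\mu$, while controlling the twist: the pullback of an ample class from $Y$ is only nef on $\widetilde{X}$, so one bounds it by $\mu^*A^{\otimes k}$ with $k$ linear in $m$, and it is exactly here that the linear bound $m>C_A k$ (equivalently, arbitrarily small slope $k/m$) is needed to contradict the vanishing. Making this descent and the comparison of symmetric powers precise, together with the inputs of Graber--Harris--Starr and of Boucksom--Demailly--Paun--Peternell, which carry the genuine positivity content, is where essentially all the difficulty lies; the forward direction is elementary by comparison.
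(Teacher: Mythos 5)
The paper states this result as a citation from \cite{campanademaillypeternell} and gives no proof of its own, so the only meaningful comparison is with that reference. Your outline reproduces its argument essentially verbatim --- the forward implication by restricting to a dominating family of very free rational curves, the converse by combining the MRC fibration with Graber--Harris--Starr and the Boucksom--Demailly--Paun--Peternell uniruledness criterion to transport pseudoeffectivity of $\omega_Y$ into $\Omega_X^q$ --- and it is sound, with the genuinely delicate points (descent of twisted symmetric differentials through the resolution, and the nef-versus-ample bookkeeping for the twist) correctly identified rather than glossed over.
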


We remark that rationally connected manifolds are simply connected. 
We have the following result (cf. \cite{brunebarbecampana}) relating simple connection and symmetric tensors, in the direction of Mumford's conjecture above:

\begin{theorem}[Brunebarbe-Campana] 
Let $X$ be a  compact K\"ahler manifold. 
Suppose that $H^0(X, \Sym^m \Omega_X^p) =0$ for all $m>0$ and all $p>0$. Then $X$ is simply connected. Furthermore under the conditions above $X$ is projective.
\end{theorem}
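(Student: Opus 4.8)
The plan is to settle projectivity directly by Hodge theory and then to prove simple connectivity by induction on $\dim X$, using the maximal rationally connected fibration to peel off a rationally connected (hence simply connected) factor.

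I would first record the Hodge-theoretic consequences of the hypothesis. Taking $m=1$ gives $h^{p,0}(X)=\dim H^0(X,\Omega_X^p)=0$ for all $p\geqslant1$, and Hodge symmetry on the compact K\"ahler manifold $X$ then forces $h^{0,q}(X)=h^{q,0}(X)=0$ for all $q\geqslant1$. In particular $H^{2,0}(X)=H^{0,2}(X)=0$, so $H^2(X,\CC)=H^{1,1}(X)$; as the K\"ahler cone is open in $H^{1,1}(X)\cap H^2(X,\RR)=H^2(X,\RR)$ and $H^2(X,\QQ)$ is dense, there is a rational K\"ahler class, and Kodaira's embedding theorem gives that $X$ is projective. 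The same vanishing yields $H^q(X,\mathcal{O}_X)=0$ for $q\geqslant1$, hence $\chi(X,\mathcal{O}_X)=1$, and $b_1(X)=2h^{1,0}(X)=0$, so the Albanese of $X$ is trivial.

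For simple connectivity I would argue by induction on $n=\dim X$, the cases $n=0$ (a point) and $n=1$ (where $h^{1,0}=0$ forces $X\cong\PP^1$) being clear. Granting that $X$ is uniruled, consider its maximal rationally connected fibration $X\dashrightarrow Z$ and a smooth projective model; its general fibres are rationally connected, hence simply connected, so the fibration induces an isomorphism $\pi_1(X)\cong\pi_1(Z)$. Because $H^0(-,\Sym^m\Omega^p)$ is a birational invariant of smooth compact K\"ahler manifolds and symmetric tensors on $Z$ pull back to symmetric tensors on $X$, the base $Z$ again satisfies the vanishing hypothesis; since $X$ is uniruled the fibres are positive dimensional, so $\dim Z<n$ and the inductive hypothesis applies, giving $\pi_1(Z)=1$ and therefore $\pi_1(X)=1$.

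The crux --- and the step I expect to be the main obstacle --- is to prove that a compact K\"ahler (now projective) manifold with no symmetric tensors is uniruled. By Boucksom--Demailly--Paun--Peternell this is equivalent to the non-pseudoeffectivity of $K_X$, so the task is to upgrade the absence of \emph{sections} of $\Sym^m\Omega_X^p$ (which a priori only gives Iitaka dimension $-\infty$) to genuine negativity of the cotangent sheaf. Here I would bring in Miyaoka's generic semipositivity and the Campana--Paun circle of ideas: if $X$ were not uniruled, $\Omega^1_X$ would be generically nef along covering families of curves, and one would try to manufacture from this a nonzero element of some $H^0(X,\Sym^m\Omega^p_X)$, contradicting the hypothesis. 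This implication lies close to Mumford's conjecture and is the genuinely hard analytic input; the remainder of the argument, as above, is essentially formal once uniruledness is in hand. One may additionally note, via Corlette--Simpson nonabelian Hodge theory together with the Brunebarbe--Klingler--Totaro theorem, that the $p=1$ vanishing already forces every finite-dimensional linear representation of $\pi_1(X)$ to have finite image, which is consistent with and reinforces the conclusion $\pi_1(X)=1$.
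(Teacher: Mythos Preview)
The paper does not contain a proof of this theorem: it is quoted as a known result of Brunebarbe and Campana, with a bare citation to \cite{brunebarbecampana} and no argument. There is therefore no ``paper's own proof'' against which to compare your proposal.

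On the merits of your sketch: the projectivity argument is correct and standard, and the inductive scheme via the MRC fibration is reasonable in outline. The genuine gap is exactly where you locate it. Your induction needs the MRC fibres to be positive-dimensional, i.e.\ $X$ must be uniruled; by BDPP this is equivalent to $K_X$ not being pseudoeffective. Deducing non-pseudoeffectivity of $K_X$ from the vanishing of all $H^0(X,\Sym^m\Omega_X^p)$ is essentially the open Mumford conjecture recalled just above this theorem in the paper, and the Miyaoka/Campana--P\u{a}un generic semipositivity you invoke does not close it: when $K_X$ is pseudoeffective one gets that $\Omega^1_X$ is generically semipositive along covering curves, but this does not by itself manufacture a nonzero global section of any $\Sym^m\Omega^p_X$ (indeed, the whole difficulty of Mumford's conjecture is that positivity on curves does not propagate to global sections). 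So as written, the key implication is assumed rather than proved, and the argument is a plausible heuristic rather than a proof. The Brunebarbe--Klingler--Totaro input you mention in your last sentence only constrains linear representations of $\pi_1(X)$ and does not yield simple connectivity.
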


Restricting the conditions above to the case $p=1$, we have the following conjecture
attributed to Esnault:

\begin{conjecture}
\label{conjesnault}
Let $X$ be a compact K\"ahler manifold.
If 
\[
H^0(X, \Sym^m \Omega_X^1) =0 \textrm{ for all $m>0$}
\]
then the fundamental group of $X$ is 
finite.

\end{conjecture}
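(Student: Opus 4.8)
The plan is to attack the conjecture by splitting it into two implications. Writing $\Gamma = \pi_1(X)$ and assuming the hypothesis that $H^0(X, \Sym^m \Omega^1_X) = 0$ for every $m > 0$, the goal is $|\Gamma| < \infty$. First I would (A) deduce from the vanishing that every finite-dimensional complex representation of $\Gamma$ has finite image; then I would (B) deduce from the finiteness of all linear images that $\Gamma$ itself is finite. As a warm-up, note that the hypothesis already forces $H^0(X, \Omega^1_X) = 0$, hence $b_1(X) = 2 h^0(X, \Omega^1_X) = 0$ on the compact K\"ahler manifold $X$; so the abelianization $\Gamma^{\mathrm{ab}}$ is finite and the Albanese map is trivial. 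Thus the infinitude of $\Gamma$, were it to occur, would have to be hidden in its nonabelian structure, which is exactly what steps (A) and (B) are designed to reach.

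For step (A) I would use nonabelian Hodge theory, in the form of its contrapositive: suppose $\Gamma$ admits a representation $\rho \colon \Gamma \to \GL_n(\CC)$ with infinite image, and manufacture a nonzero symmetric differential. After replacing $\rho$ by its semisimplification, the Corlette-Simpson correspondence (which holds on any compact K\"ahler manifold) attaches to $\rho$ a polystable Higgs bundle $(\mathcal{E}, \theta)$ with $\theta \colon \mathcal{E} \to \mathcal{E} \otimes \Omega^1_X$ and $\theta \wedge \theta = 0$. The characteristic coefficients of $\theta$ under the Hitchin map lie in $H^0(X, \Sym^k \Omega^1_X)$ for $1 \leqslant k \leqslant n$, so if $\theta$ is not nilpotent one immediately obtains a nonzero section of some $\Sym^k \Omega^1_X$. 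If instead $\theta$ is nilpotent, I would apply Simpson's $\CC^*$-action to deform $\rho$ to a fixed point underlying a $\CC$-variation of Hodge structure and extract the symmetric differentials from the graded Kodaira-Spencer (Higgs) maps of that variation; the role of the infinite image is precisely to guarantee that the limiting variation is nontrivial, so that its iterated Higgs field is nonzero. This is the strategy carried out by Brunebarbe, Klingler and Totaro in the projective case, and I would adapt it to the K\"ahler setting using the K\"ahler version of Simpson's theory.

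Step (B) is the heart of the difficulty and the reason the statement is still a conjecture. It asks that a fundamental group all of whose finite-dimensional complex representations have finite image be itself finite -- equivalently, that having no infinite linear quotient force $\Gamma$ to be finite. This is false for arbitrary groups: there exist infinite groups whose every finite-dimensional representation has finite image, and it is not known whether such a group can arise as $\pi_1$ of a compact K\"ahler manifold with $b_1 = 0$. Consequently the argument above establishes the conjecture only \emph{conditionally} -- for example when $\Gamma$ is linear, or residually finite, or more generally admits some infinite-image representation, in which case step (A) already produces the contradiction. The genuinely open case is a hypothetical infinite K\"ahler group detecting none of its infinitude through linear representations; handling it would require manufacturing symmetric differentials from a non-linear shadow of $\Gamma$ -- such as an equivariant harmonic map to a non-Archimedean building or a general $\mathrm{CAT}(0)$ target, in the spirit of Gromov-Schoen -- and no construction of symmetric differentials from such data is currently known. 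This last point is where I expect the main obstacle to lie.
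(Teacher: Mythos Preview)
The paper does not prove this statement: it is explicitly recorded there as a \emph{conjecture} attributed to Esnault, with no proof offered. What the paper does supply immediately afterwards is exactly your step~(A), quoted as the theorem of Brunebarbe--Klingler--Totaro: if $\pi_1(X)$ admits a finite-dimensional representation with infinite image, then $X$ carries a nonzero holomorphic symmetric differential. Your outline of that step via nonabelian Hodge theory and the Hitchin map is the standard route to that result and matches the paper's citation.

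Your step~(B) is, as you yourself say, not a proof but an identification of the open problem, and this is precisely where the paper stops too. So there is no discrepancy to report between your proposal and the paper's ``proof'': neither is a proof, and both isolate the same missing ingredient --- passing from ``every linear representation of $\pi_1(X)$ has finite image'' to ``$\pi_1(X)$ is finite''. Your additional remarks about non-Archimedean or $\mathrm{CAT}(0)$ targets go beyond anything the paper discusses, but they are speculative and you flag them as such. In short: your write-up is an accurate survey of the state of the conjecture, consistent with the paper, and correctly does not claim to resolve it.
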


It has been proven the (slightly) weaker statement that in this case the
fundamental group of $X$ admits no linear representation with infinite image
(cf. \cite{brunebarbe}):

\begin{theorem}[Brunebarbe-Klinger-Totaro]

Let $X$ be a compact K\"ahler manifold. Suppose that there is a finite
dimensional representation of $\pi_1 (X)$, over some field, with infinite image. 
Then $X$ has
a nonzero holomorphic symmetric differential.

\end{theorem}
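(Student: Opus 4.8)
The goal is to manufacture a nonzero element of $H^0(X, \Sym^m \Omega^1_X)$ for some $m>0$ out of a representation $\rho \colon \pi_1(X) \to \GL_n(K)$ with infinite image $\Gamma$. Since $\Gamma$ is finitely generated I would first arrange that $K$ is finitely generated over its prime field, let $H$ be the Zariski closure of $\Gamma$ with solvable radical $R(H)$, and separate two structural cases according to whether $\Gamma$ is virtually solvable or whether $H$ has a nontrivial semisimple quotient. A recurring reduction is that symmetric differentials descend along finite étale covers: if $f \colon \widetilde{X} \to X$ is a finite Galois étale cover with group $G$ and $0 \neq \omega \in H^0(\widetilde{X}, \Sym^a \Omega^1_{\widetilde{X}})$, then $\prod_{g \in G} g^*\omega$ is $G$-invariant, hence lies in $H^0(\widetilde{X}, \Sym^{a|G|}\Omega^1_{\widetilde{X}})^G = H^0(X, \Sym^{a|G|}\Omega^1_X)$ (using $f^*\Omega^1_X = \Omega^1_{\widetilde{X}}$, exactly as in Lemma \ref{translations}), and it is nonzero because $\bigoplus_m H^0(\widetilde{X}, \Sym^m \Omega^1_{\widetilde{X}})$ is an integral domain, $\widetilde{X}$ being connected. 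Thus it suffices to find a nonzero symmetric differential on some finite étale cover.

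In the virtually solvable case I would use the Albanese. By Lie--Kolchin and the structure theory of solvable linear groups, an infinite finitely generated virtually solvable group has a finite-index subgroup surjecting onto $\ZZ$; the corresponding finite étale cover $\widetilde{X} \to X$ then has $b_1(\widetilde{X}) > 0$, so Hodge theory on the compact Kähler manifold $\widetilde{X}$ furnishes $0 \neq \omega \in H^0(\widetilde{X}, \Omega^1_{\widetilde{X}})$, and the descent above produces a nonzero symmetric differential on $X$.

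Otherwise $H$ has a nontrivial semisimple part; replacing $\rho$ by its semisimplification I may assume $\rho$ is reductive, and infiniteness then forces $\rho$ to be unbounded at some place of $K$. If it is unbounded at an archimedean place, Corlette--Simpson non-abelian Hodge theory attaches to $\rho$ a polystable Higgs bundle $(E, \theta)$ with vanishing Chern classes and $\theta \neq 0$, where $\theta \in H^0(X, \mathrm{End}(E) \otimes \Omega^1_X)$ satisfies $\theta \wedge \theta = 0$ (the case $\theta = 0$ being exactly the bounded unitary one). When $\theta$ is not nilpotent, integrability makes its local matrices commute, so the coefficients of its characteristic polynomial are honest sections $c_i \in H^0(X, \Sym^i \Omega^1_X)$, not all zero. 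When $\theta$ is nilpotent, so that $(E,\theta)$ underlies a variation of Hodge structure $E = \bigoplus_p E^p$, I would instead iterate the Higgs field: the composite $\theta^{\circ k} \colon E^{p_{\max}} \to E^{p_{\max}-k} \otimes \Sym^k \Omega^1_X$ with $k = p_{\max}-p_{\min}$ is nonzero, and pairing through the polarization yields a nonzero holomorphic symmetric differential. If instead $\rho$ is bounded at all archimedean places, then it must be unbounded at a non-archimedean place, and I would invoke Gromov--Schoen to obtain a $\rho$-equivariant pluriharmonic map from $\widetilde{X}$ to the Bruhat--Tits building of $\GL_n$ over the relevant local field, extracting symmetric differentials from the Weyl-invariant polynomials in the complexified differential of this map, in direct analogy with the spectral construction.

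The main obstacle is the non-archimedean case, on two fronts. First, the reduction itself: proving that an infinite finitely generated linear group bounded at every archimedean place is unbounded at some non-archimedean place requires specializing $K$ to a number field (spreading out and choosing a suitable closed point) and invoking the product formula, since a finitely generated group bounded at all places of a number field is finite. Second, the extraction from the Gromov--Schoen map is delicate because the building is a singular nonpositively curved ($\mathrm{CAT}(0)$) target: one needs the regularity theory controlling the singular locus so that the complexified differential is genuinely holomorphic and its invariants furnish a nonzero section of some $\Sym^m \Omega^1_X$. The nilpotent archimedean sub-case is also subtle, since there the characteristic polynomial of $\theta$ vanishes identically and the differential must be read off from the Hodge grading rather than from spectral data.
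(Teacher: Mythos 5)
The paper does not prove this statement at all: it is quoted verbatim from Brunebarbe--Klingler--Totaro \cite{brunebarbe} as an external input, so there is no internal proof to compare yours against. What you have written is a reconstruction of the architecture of the published proof, and as a roadmap it is essentially accurate: the reduction to finite \'etale covers via the norm $\prod_{g\in G} g^*\omega$ (nonzero because $\bigoplus_m H^0(\widetilde{X},\Sym^m\Omega^1_{\widetilde{X}})$ embeds in the ring of holomorphic functions on the connected total space of $T\widetilde{X}$), the virtually solvable case via a finite-index subgroup surjecting onto $\ZZ$ and Hodge theory, the split into archimedean and non-archimedean unboundedness for the reductive part, Corlette--Simpson plus spectral data in the archimedean case, and Gromov--Schoen plus the building-theoretic analogue of spectral data in the non-archimedean case. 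These are indeed the ingredients of the actual argument.

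That said, as a proof the proposal has real gaps beyond the ones you flag. In the nilpotent archimedean sub-case, ``pairing through the polarization'' does not directly produce a symmetric differential: the flat polarization pairs $E^{p_{\max}}$ with $E^{p_{\min}}$, so composing with $\theta^{\circ k}\colon E^{p_{\max}}\to E^{p_{\min}}\otimes\Sym^k\Omega^1_X$ yields a nonzero section of $(E^{p_{\max}})^{*}\otimes(E^{p_{\max}})^{*}\otimes\Sym^k\Omega^1_X$, not of $\Sym^k\Omega^1_X$; extracting an honest symmetric differential from a nontrivial variation of Hodge structure is precisely the delicate point of the archimedean half of \cite{brunebarbe} and needs a genuinely different mechanism. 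Likewise the non-archimedean half rests on the regularity theory for pluriharmonic maps to Bruhat--Tits buildings and on the specialization/product-formula argument reducing a general finitely generated field to local fields; you correctly name these as obstacles, but they are theorems requiring full papers, not steps one can wave through. So the proposal should be read as a correct high-level summary of why the cited theorem is true, not as a proof; for the purposes of this paper the appropriate treatment is exactly what the authors do, namely cite \cite{brunebarbe}.
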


Conversely, one could wonder whether the presence of one (or many) non vanishing
symmetric differential implies that the fundamental group is infinite.

\begin{question}
\label{questfundgroup}

Let $X$ be a compact complex manifold, suppose that there exists $m>0$
such that 
\(
H^0(X, \Sym^m \Omega_X^1) \neq 0 ~.
\)
Is the fundamental group of $X$ infinite?

\end{question}

If $X$ is K\"ahler and $m=1$ then the answer is positive beacause of Hodge decomposition,
but in general it fails for higher $m$, even restricting to projective manifolds. In fact
 there are varieties with ample cotangent bundle (hence a lot of symmetric differential forms) that are simply connected:
a recent result of Brotbek and Darondeau (cf. \cite{brotbek}) shows that the  cotangent bundle of a general complete intersection
in $\PP^N$ of high degree and dimension $n \leqslant N/2$ is ample.
If $n \geqslant 2$ Lefschetz theorem implies that such a complete intersection is simply connected. Therefore we cannot hope to have a converse of 
Conjecture \ref{conjesnault} and a positive answer in general to Question
\ref{questfundgroup}. The counterexamples however is a projective variety
 of general type (as in particular $\omega_X$ is ample). Therefore we can state the following
 
 \begin{conjecture}
\label{azzardata}
 
 Let $X$ be a compact complex manifold. Suppose 
 that $\dim X < k(X)$  and $\Omega^1_X$ is weakly semiample.
 Then the fundamental group of $X$ is infinite.

 \end{conjecture}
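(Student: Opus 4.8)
The plan is to prove the non-projective analogue of part (2) of the preceding theorem, i.e. to assume $k(X) < n = \dim X$ (the displayed inequality should be read in this direction, since $k(X)\leqslant \dim X$ always holds) together with $\Omega^1_X$ weakly semiample, and to deduce that $\pi_1(X)$ is infinite. First I would dispose of $k(X)=0$, which is exactly part (1) and is already established for all compact complex manifolds through Theorem \ref{hyperell2}; so assume $k(X)\geqslant 1$. By Theorem \ref{mainsemiample} the line bundle $\omega_X=\det\Omega^1_X$ is semiample, hence some power $\omega_X^{\otimes N}$ is base point free and defines a morphism $f\colon X\to Y$ onto a projective variety $Y$ with $\dim Y=k(X)$ and, after Stein factorisation, connected fibres. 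Since $k(X)<n$, a general smooth fibre $F$ has positive dimension $n-k(X)$.

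Next I would pin down the fibre $F$. Restriction of a base point free system is base point free, so $\Omega^1_X|_F$ is again weakly semiample, and since $\omega_X^{\otimes N}$ is pulled back from $Y$ we have $\omega_X^{\otimes N}|_F\cong\oo_F$; hence $\det(\Omega^1_X|_F)=\omega_X|_F$ is torsion, of Iitaka dimension $0$. Theorem \ref{semiample} then provides a finite Galois étale cover $\rho\colon\widetilde{F}\to F$ on which $\rho^*(\Omega^1_X|_F)$ is trivial. Pulling back the conormal sequence $0\to\oo_F^{\oplus k}\to\Omega^1_X|_F\to\Omega^1_F\to 0$ (the conormal bundle of a fibre being trivial, $k=k(X)$) and using that on the compact connected manifold $\widetilde{F}$ a bundle map between trivial bundles is a constant matrix, the trivial subbundle splits off, so $\rho^*\Omega^1_F\cong\Omega^1_{\widetilde{F}}$ is trivial. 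Thus $\widetilde{F}$ is parallelizable and $F$ is (possibly twisted) hyperelliptic; in particular $\pi_1(F)$ contains the cocompact lattice $\Gamma_F$ of a positive-dimensional complex Lie group as a finite-index subgroup, so $\pi_1(F)$ is infinite.

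The final step would be to propagate infiniteness of $\pi_1(F)$ to $\pi_1(X)$. Over the Zariski-open locus where $f$ is a smooth submersion one has the homotopy exact sequence $\pi_1(F)\to\pi_1(X)\to\pi_1(Y)\to 1$, so it suffices to show that either $\pi_1(Y)$ is infinite or the image of $\pi_1(F)$ in $\pi_1(X)$ is infinite. Equivalently, I would assume for contradiction that $\pi_1(X)$ is finite and pass to the universal cover $\widehat{X}\to X$: it is again compact, simply connected, has $k(\widehat{X})=k(X)<\dim\widehat{X}$, carries a weakly semiample $\Omega^1_{\widehat{X}}$ (pullback preserving weak semiampleness), and its general fibres are finite étale covers of $F$, hence still twisted hyperelliptic with infinite $\pi_1$. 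In the projective or K\"ahler case this is where part (2) is forced through: the fibres are covered by abelian varieties, their invariant $1$-forms assemble through the relative Albanese into holomorphic $1$-forms that detect $b_1$ via the Hodge identity $b_1=2q$, making a simply connected $\widehat{X}$ impossible.

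The \emph{main obstacle} is precisely this last propagation in the absence of a K\"ahler hypothesis. For non-K\"ahler $X$ the fibres are quotients of non-K\"ahler parallelizable manifolds $H_F/\Gamma_F$ with $H_F$ a non-abelian (solvable or nilpotent) complex Lie group, and the bridge $b_1=2q$ collapses: holomorphic $1$-forms no longer control the first Betti number, so producing fibrewise forms or an infinite fibre $\pi_1$ does not by itself yield infinite $\pi_1(X)$. What one would need instead is a purely topological replacement, for instance a Koll\'ar--Nori-type statement that a compact complex manifold fibred in manifolds with infinite fundamental group has infinite fundamental group, or a direct assembly of the fibrewise lattices $\Gamma_F$ into a global infinite subgroup of $\pi_1(X)$ exploiting the rigidity of holomorphic maps between parallelizable manifolds recorded in Lemma \ref{autom}. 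Controlling the monodromy of the family and the non-K\"ahler degenerations of $f$ is the genuinely hard point, and is why the statement is only conjectured here.
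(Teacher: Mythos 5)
The statement you were asked to prove is labelled a \emph{conjecture} in the paper, and the paper offers no proof of it: only the two special cases of Theorem \ref{fundgroup} ($k(X)=0$ for arbitrary compact complex manifolds, and $k(X)<n$ for projective $X$, the latter via H\"oring's theorem) are established. So no attempt could be checked against ``the paper's proof''; the relevant question is whether your argument closes the open case, and it does not --- as you yourself say. Your reduction is sound and in places nicer than what the paper records: using the Iitaka map of the semiample $\omega_X$ (Theorem \ref{mainsemiample}), restricting to a general fibre $F$, observing that $\Omega^1_X|_F$ is weakly semiample with torsion determinant, invoking Theorem \ref{semiample} to trivialize it on a finite \'etale cover, and splitting off the trivial conormal bundle by the constant-matrix argument to conclude that $F$ is (twisted) hyperelliptic with infinite $\pi_1$. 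This recovers, by elementary means internal to the paper, the fibre structure that H\"oring's theorem supplies in the projective case.

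The genuine gap is exactly the one you name: passing from infiniteness of $\pi_1(F)$ to infiniteness of $\pi_1(X)$. The homotopy exact sequence only controls the \emph{image} of $\pi_1(F)$ in $\pi_1(X)$, and the paper's own Remark \ref{remarkclaudon} points out that Claudon's injectivity result fails outside the K\"ahler setting (Hopf surfaces), so there is no general principle forcing that image to be infinite; nor does finiteness of $\pi_1(X)$ obviously contradict anything after passing to the universal cover, since for non-K\"ahler parallelizable fibres the identity $b_1=2q$ and the Albanese machinery you gesture at are unavailable. Even your K\"ahler sketch at that point is only a sketch (the paper instead quotes H\"oring's product decomposition $X'\cong A\times Y$, which makes infiniteness immediate in the projective case). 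Since this propagation step is precisely what separates the proved Theorem \ref{fundgroup} from Conjecture \ref{azzardata}, your proposal should be read as a correct reduction of the conjecture to that step, not as a proof. One small point worth flagging explicitly rather than silently correcting: the inequality in the statement, $\dim X < k(X)$, is vacuous as written (one always has $k(X)\leqslant\dim X$) and must be a typo for $k(X)<\dim X$; you read it correctly.
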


A weakly semiample vector bundle is nef. In the K\"ahler case 
the conjecture above is a consequence  (cf. Remark \ref{remarkclaudon} below)  of the following conjecture by  Wu and Zheng 
(cf. \cite{wuzheng}):

\begin{conjecture}[Wu-Zheng]
\label{2cinesi}

Let $X$ be a compact K\"ahler manifold, such that $\Omega^1_X$ is nef. Then 
a finite étale cover $X^{\prime}$ of $X$ admits a smooth fibration in complex tori 
$X^{\prime} \to Y$  onto a 
 manifold $Y$
of dimension $\dim Y = k(X)$ and
such that the canonical bundle $\omega_Y$ is ample.

\end{conjecture}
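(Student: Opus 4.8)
The plan is to realise the required fibration as the semiample (Iitaka) fibration of $X$, to identify its base with the positive part of the canonical bundle and its general fibre with a complex torus, and to remove the remaining degeneracies by a finite étale cover. Since the final statement is the Wu--Zheng conjecture \cite{wuzheng} itself, any honest proposal must rest on two deep inputs, which I isolate below as the main obstacles. First I would collect the elementary consequences of the hypothesis: as any quotient of a nef vector bundle is nef, the line bundle $\omega_X = \det \Omega^1_X$ is nef; and as $\Omega^1_X$ restricts to a nef bundle on every subvariety, $X$ carries no rational curve, hence is not uniruled and $k(X) \geqslant 0$.

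The crux is to convert the nef class $\omega_X$ into a morphism. Here I would invoke abundance in the compact K\"ahler setting: a nef canonical bundle $\omega_X$ should be semiample, with Iitaka dimension equal to its numerical dimension. This produces a fibration $f \colon X \to Y$ with connected fibres, $\dim Y = k(X)$, an ample $\QQ$-line bundle $A$ on $Y$, and a relation $\omega_X \sim_{\QQ} f^* A$; possibly after replacing $Y$ by a resolution and $X$ by a finite étale cover eliminating multiple fibres, $\omega_Y$ becomes ample. This is the \emph{first main obstacle}, since full abundance for compact K\"ahler manifolds is still open in general (it is known in low dimensions and special cases), and the entire construction of the base $Y$ depends on it.

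Next I would analyse a general smooth fibre $F = f^{-1}(y)$. From the conormal sequence and $\omega_X|_F \sim_{\QQ} 0$ one reads off that $\omega_F$ is numerically trivial, so $k(F) = 0$, while $\Omega^1_F$, being a quotient of $\Omega^1_X|_F$, is nef. Thus $\Omega^1_F$ is nef with numerically trivial determinant, hence \emph{numerically flat} (equivalently $T_F$ is numerically flat). By the Demailly--Peternell--Schneider structure theorem for compact K\"ahler manifolds with numerically flat tangent bundle, a finite étale cover of $F$ is a complex torus; combined with Wang's theorem (\cite{wang}) this is exactly the assertion that $F$ is, up to finite étale cover, parallelizable K\"ahler, i.e. a torus. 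Bridging the gap between numerical flatness and an honest trivialisation of $\Omega^1$ on the fibres --- equivalently, between the nef hypothesis and the strongly semiample one used in Theorem \ref{mainparall} --- is the \emph{second main obstacle}; I would attack it through the semisimplicity of the flat monodromy representation $\pi_1(F) \to \GL_r(\CC)$, $r = \dim F$, attached to the numerically flat bundle $\Omega^1_F$.

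Finally I would globalise. The finitely many fibres over which $f$ degenerates, together with the monodromy acting on the first cohomology of the smooth fibres, obstruct the family from being a smooth torus bundle; both are finite-order, respectively finite-index, phenomena, so a further finite étale base change $Y' \to Y$ and the corresponding cover $X' = X \times_Y Y' \to Y'$ trivialise them. Nefness of $\Omega^1_X$ should moreover rule out genuinely singular fibres, a degeneration producing a non-nef quotient of $\Omega^1_X$ supported on the singular fibre, so that $f' \colon X' \to Y'$ becomes a smooth fibration whose fibres are complex tori, with $\omega_{Y'}$ ample by construction. This yields precisely the structure predicted in \ref{2cinesi}, conditional on resolving the two obstacles above.
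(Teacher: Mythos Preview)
The statement you are addressing is explicitly recorded in the paper as a \emph{conjecture} (the Wu--Zheng conjecture), not as a theorem; the paper offers no proof of it and does not claim one. There is therefore no ``paper's own proof'' to compare your proposal against. The paper merely quotes the conjecture, observes (Remark \ref{remarkclaudon}) that it would imply Conjecture \ref{azzardata} in the K\"ahler case, and records H\"oring's theorem (Theorem \ref{hoering}) confirming it for projective $X$ with $\omega_X$ semiample.

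Your proposal is honest about this: you correctly flag that the argument is conditional on two open inputs, the most serious being abundance for compact K\"ahler manifolds (your ``first main obstacle''). That is indeed the genuine gap, and it is not a gap in your write-up but in the state of the art. Your outline --- semiample fibration of $\omega_X$, numerically flat $\Omega^1_F$ on the general fibre, then a finite \'etale cover to smooth out the family --- is the standard expected route and matches, in the projective case, the shape of H\"oring's argument cited in the paper. So there is nothing to correct in your approach beyond the acknowledgement, which you already make, that it is a strategy and not a proof.
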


\begin{remark}
\label{remarkclaudon}

A  result of
Claudon (cf. \cite{claudon}) shows that if $X$ is a compact K\"aher manifold,
and $f \colon X \to Y$ is a smooth fibration 
in complex tori onto a compact K\"ahler manifold $Y$, then the fundamental group of a fiber $F$
injects into the fundamental group of the total space $X$.
Therefore we get that 
for a fibration in tori $X^{\prime} \to Y$ as above, the fundamental group of $X^{\prime}$ is infinite as soon as the dimension of $Y$ is smaller than that of $X$ (complex tori have infinite fundamental groups), therefore 
Conjecture \ref{2cinesi} implies Conjecture \ref{azzardata} in the K\"ahler case.
Claudon's result does not hold for compact complex manifolds in general:
Hopf surfaces give an example where the map from the fundamental groups of the elliptic fiber to the 
surface is not injective, however the cotangent bundle of a Hopf surface is not semiample, and the Hopf surface has an infinite fundamental group in any case.

\end{remark}

A recent theorem of H\"oring confirms Conjecture \ref{2cinesi} in the projective case,
 assuming
 the canonical  line bundle $\omega_X$ is semiample:

\begin{theorem}[H\"oring, cf. \cite{hoering}]
\label{hoering}

Let $X$ be a smooth projective variety. Suppose that $\Omega^1_X$ is nef and 
that $\omega_X$ is semiample. Then Conjecture \ref{2cinesi} holds for $X$.

\end{theorem}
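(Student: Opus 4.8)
The plan is to construct the torus fibration directly from the semiample (Iitaka) fibration of $\omega_X$ and to control its fibres by exploiting the numerical flatness forced by nefness of $\Omega^1_X$. First I would use semiampleness of $\omega_X$: for $N$ sufficiently divisible $\omega_X^{\otimes N}$ is base point free, and the Stein factorisation of the associated morphism gives a fibration $f \colon X \to Y$ with connected fibres onto a normal projective variety $Y$ with $\dim Y = k(X)$, together with an ample line bundle $A$ on $Y$ such that $\omega_X^{\otimes N} \cong f^* A$. A general (hence smooth) fibre $F$ then has $\omega_F \equiv 0$, and the relative cotangent sequence $0 \to f^* \Omega^1_Y|_F \to \Omega^1_X|_F \to \Omega^1_F \to 0$, whose first term is trivial on $F$, exhibits $\Omega^1_F$ as a quotient of the nef bundle $\Omega^1_X|_F$; hence $\Omega^1_F$ is nef with numerically trivial determinant.

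I would then apply the Demailly--Peternell--Schneider criterion that a nef bundle with vanishing first Chern class is numerically flat, so $\Omega^1_F$ is numerically flat. Combining this with $\omega_F \equiv 0$ and the Beauville--Bogomolov decomposition of projective manifolds with numerically trivial canonical bundle, I would argue that numerical flatness of the cotangent bundle excludes the simply connected Calabi--Yau and hyperk\"ahler factors (whose cotangent bundles acquire no nontrivial flat summand even after \'etale cover), forcing $F$ to be a complex torus up to a finite \'etale cover.

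The core of the argument is to upgrade this fibrewise conclusion to a genuine smooth fibration in tori after a finite \'etale base change. The decisive geometric input is once more the nefness of $\Omega^1_X$: the manifold $X$ contains no rational curve, since a nonconstant map $\PP^1 \to X$ would yield a surjection $\Omega^1_X|_{\PP^1} \twoheadrightarrow \Omega^1_{\PP^1} \cong \oo_{\PP^1}(-2)$, contradicting nefness. The absence of rational curves forces every fibre of $f$ to be smooth (a degenerating fibre of a torus fibration would contain rational curves), so $f$ is a smooth fibration with torus fibres; a monodromy argument, passing to a finite \'etale cover $X' \to X$ over which the relevant torsion and monodromy become trivial, then produces the desired smooth fibration in complex tori $X' \to Y'$ of Conjecture~\ref{2cinesi}.

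Finally I would verify ampleness of $\omega_{Y'}$. For a smooth torus fibration the relative canonical bundle is trivial along the fibres and, after the \'etale cover has trivialised the variation, $\omega_{X'/Y'}$ becomes a trivial (Hodge) line bundle, so $\omega_{X'} \cong f'^* \omega_{Y'}$; comparing with $\omega_{X'}^{\otimes N} \cong f'^* A'$ ample and using that $f'$ has connected fibres gives $\omega_{Y'}$ ample. The step I expect to be the main obstacle is the middle one: identifying the fibres as \emph{tori} rather than merely as manifolds with numerically flat cotangent bundle, and globalising this into a smooth torus fibration. This is precisely where projectivity, the Beauville--Bogomolov decomposition, the semipositivity of the Hodge bundle and the non-existence of rational curves must be orchestrated together, and where H\"oring's proof concentrates its real work.
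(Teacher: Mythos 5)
First, a point of order: the paper does not prove Theorem~\ref{hoering} at all --- it is quoted verbatim from H\"oring's article \cite{hoering} and used as a black box, so there is no ``paper's own proof'' to compare your attempt against. Judged on its own terms, your sketch correctly reconstructs the standard opening moves (Iitaka fibration $f\colon X\to Y$ with $\omega_X^{\otimes N}\cong f^*A$, nefness and numerical triviality of $\Omega^1_F$ on a general fibre via the conormal sequence, the Demailly--Peternell--Schneider numerical flatness criterion, vanishing of $c_2(F)$ to kill the Calabi--Yau and hyperk\"ahler factors in Beauville--Bogomolov). Up to that point the argument is sound.

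The gap is exactly where you suspect it, but it is worse than an ``obstacle'': the assertion that \emph{a degenerating fibre of a torus fibration would contain rational curves} is false as stated, so absence of rational curves does not make $f$ smooth. Already for elliptic surfaces, a multiple fibre of type $mI_0$ is a smooth elliptic curve with multiplicity $m>1$; it contains no rational curve, yet the fibration is not smooth there, and no finite \'etale cover of the total space alone removes it --- one must modify the base. In higher dimension the analogous phenomena (multiple and non-reduced fibres, isotriviality questions) are precisely what H\"oring's proof spends its effort on, using semipositivity of direct image sheaves and an equidimensionality/local triviality analysis rather than a rational-curve argument. Your final step is also under-justified: from $\omega_{X'/Y'}$ trivial on every fibre you only get $\omega_{X'/Y'}\cong f'^*L$ for some nef line bundle $L$ on $Y'$, hence $(\omega_{Y'}\otimes L)^{\otimes N}\cong A'$ ample; concluding that $\omega_{Y'}$ itself is ample requires showing $L$ is numerically trivial (equivalently, controlling the Hodge bundle of the family), which you assert (``becomes a trivial (Hodge) line bundle'') without proof. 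So the proposal is a reasonable roadmap but not a proof, and the two places it breaks are the smoothness of the fibration after \'etale cover and the ampleness of $\omega_{Y'}$.
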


Other interesting results in the compact complex case, considering the relations between the existence of particular holomorphic symmetric differentials and infiniteness of the fundamental group, are the following theorems:

\begin{theorem}[Bogomolov-De Oliveira, cf. \cite{bogomolov}]

Let $X$ be a compact complex manifold.
If there exists a nontrivial locally exact holomorphic symmetric differential of rank $1$ on $X$, then the fundamental group of $X$ is infinite.

\end{theorem}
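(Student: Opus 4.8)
The plan is to convert the existence of the symmetric differential into an \emph{infinite} representation of $\pi_1(X)$, by passing to a local $m$-th root of the differential and reading off its periods. The rank-$1$, locally exact hypothesis means that for some $m>0$ there is a nonzero $\omega \in H^0(X, \Sym^m \Omega^1_X)$ which is locally of the form $\omega = \eta^{\otimes m}$ for a \emph{closed} (by the holomorphic Poincar\'e lemma, locally exact) holomorphic $1$-form $\eta$; this $\eta$ is the object I want to integrate.

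First I would make the root global on the universal cover, assuming for the moment that $\omega$ is nowhere zero (the general case is discussed below). Let $\pi \colon \widetilde X \to X$ be the universal covering. The local $m$-th roots $\eta$ are determined only up to multiplication by an $m$-th root of unity, so analytic continuation along loops produces a monodromy character $\rho \colon \pi_1(X) \to \mu_m \subset \CC^*$ of finite order. Choosing a root consistently on the simply connected $\widetilde X$, I obtain a single closed holomorphic $1$-form $\widetilde\eta$ with $\widetilde\eta^{\otimes m} = \pi^* \omega$, on which the deck group acts by $\gamma^* \widetilde\eta = \rho(\gamma)\, \widetilde\eta$ (a deck transformation sends one branch of the root to another). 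Since $\widetilde\eta$ is closed on a simply connected manifold it is exact, say $\widetilde\eta = dF$ for a holomorphic $F \colon \widetilde X \to \CC$.

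Next I would extract the homomorphism. From $\gamma^* \widetilde\eta = \rho(\gamma)\widetilde\eta$ one gets $d(\gamma^* F) = \rho(\gamma)\, dF$, hence $\gamma^* F = \rho(\gamma) F + c(\gamma)$ for a constant $c(\gamma) \in \CC$, so that $\gamma \mapsto (\rho(\gamma), c(\gamma))$ defines a homomorphism $\pi_1(X) \to \mathrm{Aff}(\CC) = \CC^* \ltimes \CC$. Restricting to the finite-index subgroup $N = \ker \rho$, where $\rho \equiv 1$, the map $c|_N \colon N \to (\CC, +)$ is an honest group homomorphism, and the argument splits in two. If $c|_N \neq 0$, its image is a nonzero subgroup of $(\CC, +)$, hence infinite, so $N$ and therefore $\pi_1(X)$ are infinite. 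If $c|_N = 0$, then $F$ is invariant under $N$ and descends to a holomorphic function on the compact manifold $\widetilde X / N$ (a finite cover of $X$); being holomorphic on a compact complex manifold it is constant, forcing $\widetilde\eta = dF = 0$ and hence $\omega = 0$, contradicting nontriviality. Thus the first alternative must hold and $\pi_1(X)$ is infinite.

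The main obstacle is precisely the first step, namely making the passage to a single-valued root rigorous in the presence of \emph{zeros} of $\omega$. Away from the zero locus of $\omega$ the monodromy of the $m$-th root is genuinely valued in $\mu_m$, but a zero of $\omega$ whose order is not divisible by $m$ contributes additional local monodromy around the zero divisor $D$; such a small loop is contractible in $X$ yet acts nontrivially on the root, so $\widetilde\eta$ need \emph{not} be single-valued even on $\widetilde X$. Handling this requires replacing $X$ by the cyclic cover ramified along $D$ (equivalently, a finite cover on which the line subsheaf $L \subset \Omega^1_X$ generated by $\omega$ carries a section whose $m$-th power is $\omega$) and checking that this does not destroy the finite-index relationship between the fundamental groups, so that the period dichotomy above still transfers back to $X$. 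Once a genuine nonzero closed holomorphic $1$-form on a compact cover is in hand, the period argument is routine.
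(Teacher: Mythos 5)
A preliminary remark: the paper does not prove this statement; it is quoted from Bogomolov--De Oliveira as background in Section 5, so there is no in-paper argument to compare yours against. Judged on its own terms, the part of your argument you actually carry out is correct: when the local $m$-th roots of $\omega$ patch into a $\mu_m$-local system on all of $X$ (for instance when $\omega$ is nowhere zero, or its divisorial zero locus has all multiplicities divisible by $m$), the single-valued closed form $\widetilde\eta=dF$ on the universal cover, the affine monodromy $\gamma\mapsto(\rho(\gamma),c(\gamma))\in\CC^*\ltimes\CC$, and the dichotomy ``either $c|_{\ker\rho}\neq 0$, so $\pi_1(X)$ has an infinite subgroup mapping onto a nontrivial, hence infinite, subgroup of the torsion-free group $(\CC,+)$, or $F$ descends to a compact finite cover, is constant, and forces $\omega=0$'' is a complete and standard argument.

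The gap is exactly where you place it, but it is not the routine verification your last paragraph makes it sound like. If $\omega$ vanishes along a divisor $D$ with some multiplicity not divisible by $m$, then $\rho$ and $c$ are only defined on $\pi_1(X\setminus D)$, and your proposed repair --- pass to the cyclic cover $Y\to X$ \emph{ramified} along $D$ and integrate the now single-valued root there --- can at best show that $\pi_1(Y)$ is infinite. That conclusion does not transfer back to $X$: for a ramified finite cover $f\colon Y\to X$ the image $f_*\pi_1(Y)$ does have finite index in $\pi_1(X)$, but $f_*$ may have infinite kernel, and the period homomorphism may die entirely inside that kernel. The quotient of an elliptic curve $E$ by $z\mapsto -z$ is the cautionary example: $(dz)^{\otimes 2}$ is invariant and of rank $1$, its periods generate an infinite subgroup of $\CC$ upstairs, yet $\pi_1(\PP^1)=1$; the theorem is not contradicted only because the descended differential acquires poles along the branch locus. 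Since the local monodromy of the root sees only the residue class of the vanishing order mod $m$, it cannot by itself distinguish a zero of order $1$ from a pole of order $1$, so a correct proof must exploit the holomorphy of $\omega$ on $X$ in some way that goes beyond ``take the root cover and read off periods.'' As written, your argument establishes the theorem only under the extra hypothesis that the divisorial zero locus of $\omega$ has all multiplicities divisible by $m$; handling the general case is the actual content of Bogomolov--De Oliveira's proof and is missing here.
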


\begin{theorem}[Biswas-Dumitrescu, cf. \cite{biswasdumitrescu}]

Let $X$ be a compact complex manifold.
If there exists a nowhere degenerate holomorphic symmetric differential of degree $2$ on $X$, 
 \emph{i.e.} a form $\omega \in H^0(X , \Sym^2 \Omega^1_X)$ such that $\omega(x)$ is a non-degenerate 
quadratic form on $T_{X,x}$,
then the fundamental group of $X$ is infinite.

\end{theorem}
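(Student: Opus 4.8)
The plan is to argue by contraposition. Assuming $\pi_1(X)$ is finite, I would pass to the universal cover $q \colon Y \to X$, which is then a \emph{compact, simply connected} complex manifold; since $q$ is a local biholomorphism, the pullback $q^*\omega$ is again an everywhere-nondegenerate element of $H^0(Y, \Sym^2 \Omega^1_Y)$. Thus it suffices to prove that a compact simply connected complex manifold $Y$ admits no everywhere-nondegenerate holomorphic symmetric $2$-form, i.e. no \emph{holomorphic Riemannian metric}.

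First I would record the elementary structural consequences of nondegeneracy. Lowering indices, $\omega$ gives a holomorphic isomorphism $T_Y \xrightarrow{\sim} \Omega^1_Y$, whence $\det T_Y \cong \det \Omega^1_Y$, that is $K_Y^{-1} \cong K_Y$, so $K_Y^{\otimes 2} \cong \oo_Y$: the canonical bundle is $2$-torsion (in particular $X$ itself would have Kodaira dimension $0$, consistently with the surrounding results). On the simply connected $Y$ this torsion must vanish: by universal coefficients $\mathrm{Tors}\, H^2(Y,\ZZ) \cong \mathrm{Tors}\, H_1(Y,\ZZ) = 0$, while $\mathrm{Pic}^0(Y) = H^1(Y,\oo_Y)/H^1(Y,\ZZ) = H^1(Y,\oo_Y)$ is a torsion-free $\CC$-vector space; hence $\mathrm{Pic}(Y)$ is torsion-free and $K_Y \cong \oo_Y$, so $Y$ carries a nowhere-vanishing holomorphic $n$-form.

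Next I would produce the holomorphic Levi--Civita connection. Since $\omega$ is holomorphic, symmetric and nondegenerate, the Koszul formula defines \emph{algebraically} a holomorphic torsion-free connection $\nabla$ on $T_Y$ with $\nabla \omega = 0$. Thus $T_Y$ admits a holomorphic connection, and by Atiyah's theorem this forces the Atiyah class of $T_Y$ to vanish, whence all rational Chern classes $c_i(T_Y) \in H^{2i}(Y,\CC)$ vanish. In particular $\int_Y c_n(T_Y) = 0$, and by Gauss--Bonnet--Chern (valid on any compact complex manifold) the topological Euler characteristic satisfies $\chi_{\mathrm{top}}(Y) = 0$.

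The remaining step is to convert the geometry of $(Y, \omega, \nabla)$ into an actual contradiction, and this is where I expect the \emph{main obstacle} to lie: the numerical facts $K_Y \cong \oo_Y$ and $\chi_{\mathrm{top}}(Y) = 0$ are consistent with simple connectivity but not by themselves contradictory, so the orthogonal/affine structure must be used essentially. I would study the holomorphic Cartan geometry defined by $\nabla$, whose holonomy reduces to $O(n,\CC)$, and attempt to show, combining compactness with the vanishing of all Chern classes, that the associated developing map identifies $Y$ with an open subset of a homogeneous model (flat $\CC^n$ with its standard quadratic form, or a quadric), which cannot be simultaneously compact and simply connected without exhibiting $Y$ as a covering-space quotient and hence producing a nontrivial $\pi_1$. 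Controlling this developing map — equivalently, proving the Levi--Civita connection is flat, or otherwise model-rigid, on a compact simply connected manifold — is the technical heart of the argument, and for this step I would import the analysis of Biswas and Dumitrescu.
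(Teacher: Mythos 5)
The paper offers no proof of this statement: it is quoted from \cite{biswasdumitrescu} purely as context for the discussion of fundamental groups, so there is no internal argument to compare yours against, and your proposal has to stand on its own as a proof of the Biswas--Dumitrescu theorem.

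Your reductions are correct and are the natural opening moves: passing to the compact universal cover $Y$; using nondegeneracy to get $T_Y \cong \Omega^1_Y$ and hence $K_Y^{\otimes 2} \cong \oo_Y$; killing the torsion on the simply connected cover (the universal-coefficients argument for $\mathrm{Tors}\,H^2(Y,\ZZ)$ and the vector-space structure of $H^1(Y,\oo_Y)$ are both fine) to get $K_Y \cong \oo_Y$; and writing down the holomorphic Levi--Civita connection via the Koszul formula. One small caveat: on a possibly non-K\"ahler $Y$ the blanket statement ``all rational Chern classes vanish'' via Atiyah needs care; what you get directly is that the Chern--Weil forms of a holomorphic connection are holomorphic forms of degree $2i$, hence vanish for $2i > n$, which still yields $\chi_{\mathrm{top}}(Y) = \int_Y c_n(T_Y) = 0$. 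The genuine gap is the final step, and you flag it yourself. There is no developing map unless the Cartan geometry is flat, and flatness of the holomorphic Levi--Civita connection is \emph{not} a formal consequence of compactness, simple connectivity, $K_Y \cong \oo_Y$, or vanishing characteristic numbers: quotients $\mathrm{SL}(2,\CC)/\Gamma$ carry the non-flat holomorphic Riemannian metric induced by the Killing form, so any argument must use $\pi_1(Y)=1$ in an essential, non-numerical way at exactly this point. Declaring that you would ``import the analysis of Biswas and Dumitrescu'' here makes the proposal circular as a proof of their theorem; the rigidity/local-homogeneity analysis of the holomorphic metric on a simply connected compact manifold (extension of local Killing fields, study of the local Killing algebra) is the entire content of the result, not a technical appendix. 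So: correct setup, but the core claim is unproved.
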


We can confirm some cases of Conjecture \ref{azzardata} with the  following
theorem, which generalise a result in \cite{mistrettaparma}:

\begin{theorem}
\label{fundgroup}

Let $X$ be a compact complex manifold of dimension $n$ and  Kodaira dimension
$k(X)$.

\begin{enumerate}

\item If $k(X) = 0$ and $\Omega^1_X$ is weakly semiample,
then the fundamental group of $X$ is infinite.

\item If $k(X) <n$, $X$ is projective, and $\Omega^1_X$ is weakly semiample,
then the fundamental group of $X$ is infinite.

\end{enumerate}

\end{theorem}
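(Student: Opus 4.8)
The plan is to treat the two cases through the structural results already established in the paper, reducing both to the infiniteness of the fundamental group of a complex torus, which is well known. For part (i), when $k(X)=0$ and $\Omega^1_X$ is weakly semiample, Theorem \ref{hyperell2} applies directly: such an $X$ is a (possibly twisted) hyperelliptic manifold, i.e. $X \cong \widetilde{X}/G$ for a compact complex parallelizable manifold $\widetilde{X}$ and a finite group $G$ acting freely. The covering $\widetilde{X} \to X$ is finite étale, hence $\pi_1(\widetilde{X})$ injects as a finite-index subgroup of $\pi_1(X)$, so it suffices to show $\pi_1(\widetilde{X})$ is infinite. By Wang's theorem, $\widetilde{X} = H/\Gamma$ with $H$ a complex Lie group and $\Gamma$ a cocompact discrete subgroup; since $H$ is a connected (hence contractible up to its maximal compact, but in any case the universal cover suffices) Lie group, $\pi_1(\widetilde{X})$ surjects onto $\Gamma$, and $\Gamma$ must be infinite because $H$ is positive-dimensional and $\Gamma$ is cocompact. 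Thus $\pi_1(X)$ is infinite.

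For part (ii), the hypotheses are $X$ projective, $\Omega^1_X$ weakly semiample, and $k(X) < n = \dim X$. First I would observe that weak semiampleness of $\Omega^1_X$ implies $\Omega^1_X$ is nef, so Conjecture \ref{2cinesi} in its confirmed projective form is available provided $\omega_X$ is semiample. The key point is that weak semiampleness of $\Omega^1_X$ forces $\omega_X = \det \Omega^1_X$ to be semiample: this is precisely Theorem \ref{mainsemiample} (equivalently the determinant theorem proven in Section \ref{lemmas}), which gives that $\det(E)$ is semiample whenever $E$ is weakly semiample. Hence both hypotheses of H\"oring's theorem (Theorem \ref{hoering}) are satisfied, and Conjecture \ref{2cinesi} holds for $X$.

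Applying Theorem \ref{hoering}, a finite étale cover $X' \to X$ admits a smooth fibration $X' \to Y$ in complex tori onto a manifold $Y$ with $\dim Y = k(X)$ and $\omega_Y$ ample. Since $k(X) < n$, the fibers are positive-dimensional complex tori, hence have infinite fundamental group. By Claudon's result (Remark \ref{remarkclaudon}), in the K\"ahler (here projective) setting the fundamental group of a fiber injects into $\pi_1(X')$, so $\pi_1(X')$ is infinite; as $X' \to X$ is finite étale, $\pi_1(X')$ has finite index in $\pi_1(X)$, and therefore $\pi_1(X)$ is infinite as well.

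The main obstacle, and the reason the projective hypothesis is needed in part (ii), is exactly the availability of H\"oring's theorem together with Claudon's injectivity result: both are stated in the projective (or K\"ahler) category, and as Remark \ref{remarkclaudon} notes, the injection of the fiber's fundamental group can fail for general compact complex manifolds (Hopf surfaces being the cautionary example). The crucial conceptual step that makes the chain of deductions go through is the semiampleness of the canonical bundle extracted from Theorem \ref{mainsemiample}; without it one could only assert nefness of $\Omega^1_X$ and would be stuck at the unconditional form of the Wu--Zheng conjecture. In part (i) the argument is entirely self-contained via Theorem \ref{hyperell2} and Wang's structure theorem, so no such conjectural input is required there.
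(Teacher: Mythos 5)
Your overall strategy coincides with the paper's: part (i) via Theorem \ref{hyperell2} plus infiniteness of the fundamental group of a compact complex parallelizable manifold, and part (ii) via H\"oring's theorem combined with Claudon's injectivity result (the paper also records the sharper form of H\"oring's conclusion, namely that the \'etale cover is a product $A \times Y$ with $A$ abelian of dimension $n - k(X)$, which makes the infiniteness immediate without invoking Claudon). Your observation that Theorem \ref{mainsemiample} supplies the semiampleness of $\omega_X$ needed to put $X$ within the hypotheses of Theorem \ref{hoering} is exactly the right gluing step for part (ii).

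There is, however, one genuine flaw in part (i): the assertion that ``$\Gamma$ must be infinite because $H$ is positive-dimensional and $\Gamma$ is cocompact'' is false as stated. A cocompact discrete subgroup of a \emph{compact} positive-dimensional complex Lie group can be finite or even trivial: take $H$ a complex torus and $\Gamma = \{e\}$, a presentation that occurs precisely in the most basic case, where the parallelizable cover $\widetilde{X}$ is itself a complex torus. In that situation the infiniteness of $\pi_1(\widetilde{X})$ comes from $\pi_1(H)$, not from $\Gamma$. The repair is either to split into cases as in Remark \ref{remfundparall} of the paper (if $\Gamma$ is finite then $H$ is compact, hence a complex torus, whose infinite fundamental group injects into $\pi_1(\widetilde{X})$; if $\Gamma$ is infinite the conclusion is direct), or to replace $H$ by its universal cover, which is a simply connected positive-dimensional complex Lie group and therefore non-compact, so that any cocompact discrete subgroup of it is indeed infinite. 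With that correction your argument is complete and matches the paper's.
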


\begin{proof}

Case (i) follows from Theorem \ref{hyperell2}, 
as parallelizable manifolds have infinite fundamental group (cf. Remark
\ref{remfundparall} below).

Case (ii) follows from H\"oring's Theorem \ref{hoering} as explained above, and it can be proven explicitly  as follows: 
it is proven in \cite{hoering} that a smooth projective variety 
$X$ with weakly semiample cotangent bundle admits a finite étale cover 
$X^{\prime} \to X$ 
which is a product $ X^{\prime} = A \times Y$ of an abelian variety $A$ of dimension $n - k(X)$ and 
 a smooth projective manifold $Y$ of dimension $\dim Y = k(X)$ with ample canonical line bundle $\omega_Y$.
 Then as soon as $k(X) =  \dim (Y) < \dim X$ the fundamental group of 
 $X^{\prime}$ (so the one  of $X$) is infinite.

\end{proof}

\begin{remark}
\label{remfundparall}

Let $P$ be a compact complex parallelizable manifold. Then the fundamental group
$\pi_1 (X)$ is infinite.
In fact
suppose $P = H / \Gamma$ 
with $H$ a complex Lie
group and $\Gamma$ a discrete subgroup.
If $\Gamma$ is finite then $H$ is a compact complex Lie group, so it is a 
complex torus, which has an infinite fundamental group contained in the fundamental group of $P$.
If $\Gamma$ is infinite then $\pi_1(P)$ is infinite as well, as it contains $\Gamma$.

\end{remark}

\subsection{Bimeromorphic characterization}

We can wonder whether similar arguments can be used to characterize bimeromorphically 
abelian varieties, or complex tori, or compact complex parallelizable manifolds.

In fact a birational characterization of abelian varieties was obtained under 
supposing that the variety admits a good minimal model.

\begin{definition}

Let $X$ be a smooth projective variety over $\CC$. We say that $X$ admits a 
\emph{good minimal model} if there exists a normal projective variety $Y$ with terminal singularities such 
that $X$ is birational to $Y$ and that $mK_Y$ is a base point free divisor  for some integer 
$m>0$.

\end{definition}

In the work \cite{mistrettaav} the second named author obtains the following result:

\begin{theorem}

Let $X$ be a smooth projective complex variety of Kodaira dimension $k(X)$ that admits a good minimal model.
Then $X$ is birational to an Abelian Variety if and only if 
$k(X) = 0 $ and $\Omega^1_X$ is Asymptotically Generically Generated.

\end{theorem}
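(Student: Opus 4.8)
The plan is to prove the two implications separately: the forward one is essentially formal, while the converse is where all the geometry of Kodaira dimension zero enters. Note first that the ``good minimal model'' hypothesis is automatic for a variety birational to an abelian variety $A$ (take $A$ itself, with $mK_A=\oo_A$), so it only constrains the converse.

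\textbf{Forward implication.} Suppose $X$ is birational to an abelian variety $A$ of dimension $n$. Since the Kodaira dimension is a birational invariant, $k(X)=k(A)=0$. For the cotangent condition I would use that the spaces $H^0(X,\Sym^m\Omega^1_X)$ are birational invariants among smooth projective varieties. A birational map $X\dashrightarrow A$ restricts to an isomorphism $\phi\colon U\xrightarrow{\sim}V$ of dense Zariski open subsets $U\subseteq X$, $V\subseteq A$, and restriction of symmetric forms yields, for every $m>0$, an isomorphism $H^0(X,\Sym^m\Omega^1_X)\cong H^0(A,\Sym^m\Omega^1_A)$ compatible with evaluation on $U\cong V$. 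As $\Omega^1_A\cong\oo_A^{\oplus n}$ is trivial, $\Sym^m\Omega^1_A$ is globally generated at every point of $A$; transporting through $\phi$, the global sections of $\Sym^m\Omega^1_X$ generate $\Sym^m\Omega^1_X$ at every point of $U$. Hence $\Omega^1_X$ is asymptotically generically generated, with witnessing open set $U$.

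\textbf{Converse.} Assume now $k(X)=0$, that $X$ has a good minimal model, and that $\Omega^1_X$ is asymptotically generically generated. Since a good minimal model $Y$ of a variety with $k=0$ has $K_Y$ semiample of Iitaka dimension $0$, hence torsion, I would reduce the problem, via Kawamata's characterization of abelian varieties, to the single numerical equality $q(X)=\dim X$: for a variety admitting a good minimal model, $k(X)=0$ together with maximal irregularity $q(X)=\dim X$ forces $X$ to be birational to $\mathrm{Alb}(X)$. Equivalently, it suffices to show that the Albanese morphism $a\colon X\to\mathrm{Alb}(X)$ is generically finite onto its image. To establish this I would argue by induction on $\dim X$, analysing a general smooth fibre $F$ of $X\to a(X)$. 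The first step is that asymptotic generic generation descends to $F$: restricting $s\in H^0(X,\Sym^m\Omega^1_X)$ to $F$ and composing with $\Sym^m$ of the quotient $\Omega^1_X|_F\twoheadrightarrow\Omega^1_F$ produces a global section of $\Sym^m\Omega^1_F$, and generic generation of $\Sym^m\Omega^1_X$ at a general $x\in F$ projects to generic generation of $\Sym^m\Omega^1_F$ at $x$; thus $\Omega^1_F$ is again asymptotically generically generated. Granting that $F$ inherits a good minimal model, one gets $k(F)\ge 0$, while the additivity of the Kodaira dimension over the maximal Albanese dimension base $a(X)$ (a known case of Iitaka's $C_{n,m}$ conjecture) gives $k(F)\le k(X)=0$; hence $k(F)=0$. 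If $\dim F>0$ the inductive hypothesis then forces $F$ to be birational to a positive dimensional abelian variety, so $q(F)>0$, which should be excluded because a general Albanese fibre carries no $1$-forms of its own. This collapses $F$ to a point, makes $a$ generically finite, and yields $q(X)=\dim X$.

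\textbf{Main obstacle.} The delicate heart of the converse is exactly this last collapse: ruling out positive irregularity of the general Albanese fibre. In complete generality a non-isotrivial abelian fibration can produce fibre $1$-forms that do not extend to the total space, so the argument must genuinely exploit that the generation is effected by honest global sections on $X$ itself, not merely after an étale cover. This is precisely the feature separating ``birational to an abelian variety'' from the strictly weaker ``birational to a hyperelliptic manifold'': for a bielliptic surface the $G$-invariant symmetric differentials fail to span $\Sym^m\Omega^1$ at a general point, so its cotangent bundle is \emph{not} asymptotically generically generated. Carrying the collapse out cleanly requires the structure theory of good minimal models of Kodaira dimension $0$ — a Beauville–Bogomolov type splitting of the Albanese fibration into abelian, Calabi–Yau and symplectic parts, the last two contributing no generically generating symmetric differentials — rather than the formal vector bundle manipulations of Section~\ref{lemmas}. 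An alternative, closer to the proof of Theorem~\ref{semiample}, would be to run the resultant construction generically over $U$ to produce a dominant generically finite map from a variety with generically trivial cotangent bundle and then descend; but the same point, ensuring one lands on the abelian variety rather than on a free quotient of it, is where the real work lies.
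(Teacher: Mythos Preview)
The paper does not contain a proof of this theorem. It is quoted in the final section (on bimeromorphic characterization) as a result obtained by the second named author in the earlier work \cite{mistrettaav}; no argument is given or even sketched here. There is therefore no ``paper's own proof'' to compare your proposal against.

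As for your proposal on its own terms: the forward implication is correct and essentially formal, as you say. For the converse, the strategy of reducing to $q(X)=\dim X$ via Kawamata's characterization and then analysing the Albanese fibre is reasonable, and you have honestly flagged the real difficulty --- ruling out positive irregularity of the general Albanese fibre. But what you have written is explicitly a sketch with acknowledged gaps rather than a proof: you assume without justification that the general fibre $F$ inherits a good minimal model, and the ``collapse'' step is described as requiring structure theory (Beauville--Bogomolov type decomposition) that you do not carry out. So the proposal identifies a plausible line of attack and its obstacles, but does not close them. If you want to see how the argument is actually completed, you would need to consult \cite{mistrettaav} rather than the present paper.
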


As under the usual conjecture of Minimal Model Program every smooth projective variety admits a good minimal model, it would be interesting to obtain the result above without making use of that hypothesis.
In particular we could try to obtain a more general bimeromorphic characterization for 
compact complex parallelizable manifolds among all compact complex manifolds, or for complex tori among K\"ahler manifolds.

\begin{question}

Let $X$ be a compact complex manifold of Kodaira dimension $k(X)=0$ such that 
$\Omega^1_X$ is Asymptotically Generically Generated. Is $X$ bimeromorphic to 
a compact complex parallelizable manifold?

\end{question}

\begin{remark}

If $X$ is a compact K\"ahler manifold of dimension at most $3$ then the Minimal Model Program can be applied to $X$, \emph{i.e.} a good minimal model for $X$ exists.
Therefore it should be possible to answer the question above in that case.

In any case it would be interesting to obtain a bimeromorpphic characterization
without making use of the Minimal Model Program, at least in the K\"ahler case.
\end{remark}


 \bibliographystyle{amsalpha22}
 \bibliography{parall}

 \noindent
\textsc{Francesco Esposito, Ernesto C. Mistretta\\
Dipartimento di Matematica \\Universit\`a 
degli Studi di Padova\\
via Trieste 63, 35121 Padova, Italy}

\textit{email:}\texttt{ esposito@math.unipd.it, ernesto.mistretta@unipd.it}

\end{document}